  \theoremstyle{plain}
    \newtheorem{thm}{Theorem}[section]
    \newtheorem{prop}[thm]{Proposition}
   \newtheorem{lemma}[thm]{Lemma}
    \newtheorem{corollary}[thm]{Corollary}
    \newtheorem{subsec}[thm]{}
\theoremstyle{definition}
    \newtheorem{defn}[thm]{Definition}
        \newtheorem{remark}[thm]{Remark}
    \newtheorem{exam}[thm]{Example}
\theoremstyle{remark}
\title{}
\author{}
\date{}
\begin{document}

\title{Extension, deformation and categorification of $\text{AssDer}$ pairs}
%\author{Apurba Das \\ Indian Institute of Technology, Kanpur 208016, Uttar Pradesh, India \\ (e-mail: apurbadas348@gmail.com)}
%\author{Apurba Das and Ashis Mandal\\Department of Mathematics and Statistics,
%Indian Institute of Technology,\\ Kanpur 208016, Uttar Pradesh, India.\\apurbadas348@gmail.com~~~
%amandal@iitk.ac.in}
%\email{apurbadas348@gmail.com}

\author{Apurba Das}
\address{Department of Mathematics, Indian Institute of Technology, Kharagpur 721302, West Bengal, India.}
\email{apurbadas348@gmail.com}

\author{Ashis Mandal}
\address{Department of Mathematics and Statistics,
Indian Institute of Technology, Kanpur 208016, Uttar Pradesh, India.}
\email{amandal@iitk.ac.in}

\subjclass[2010]{16E40, 16S80, 16W25, 18N25, 18N40.}
\keywords{AssDer pair, Hochschild cohomology, central extension, deformation, homotopy derivation, categorification.}

\maketitle
\begin{abstract}
In this paper, we consider associative algebras equipped with derivations. A pair consisting of an associative algebra and a distinguished derivation is called an AssDer pair. We study central extensions and formal one-parameter deformations of AssDer pairs in terms of cohomology. Finally, we define $2$-derivations on associative $2$-algebras and show that the category of associative $2$-algebras with $2$-derivations is equivalent to the category of $2$-term $A_\infty$-algebras with homotopy derivations.
\end{abstract}

%\noindent {\bf Keywords:} AssDer pair, Hochschild cohomology, Crossed extensions, Abelian extensions, Deformations, Homotopy derivations, Categorifications.

%\noindent {\bf 2010 MSC classifications:} 16E40, 16S80, 16W25, 18N25, 18N40.
%$17$A$30,$ $17$B$55$.

%\tableofcontents

\noindent
\thispagestyle{empty}

\maketitle

\section{Introduction}
Algebraic structures, such as Lie algebras and associative algebras are important in various areas of mathematics and physics. 
%To understand an algebraic structure, sometimes it is useful to know the group of algebra isomorphisms. 
Algebras are also useful via their derivations. If the algebra $A$ is a polynomial algebra in $n$ variables, some special types of derivations (locally nilpotent, locally finite etc.) are studied extensively in the literature.  
%For instance, to consider polynomial flows and also in connection to fourteenth problem of Hilbert as well (see in  \cite{Nowicki2002} and references).  
One can construct a homotopy Lie algebra out of a graded Lie algebra with a special derivation \cite{voro}. In \cite{coll} the authors use noncommuting derivations in an associative algebra to construct deformation formulas. Derivations are useful in control theory and gauge theories in QFT \cite{ref-1, ref-2}.
Algebras with derivations are also studied from an operadic point of view \cite{loday, doubek-lada}. 
 Recently, the authors in \cite{tang-f-sheng} considered Lie algebras equipped with derivations (also called LieDer pairs). More precisely, they study central extensions and deformations of LieDer pairs from a cohomological point of view.
 
 \medskip

 In this paper, we consider a pair of associative algebra $A$ with a distinguished derivation $\phi_A$. Such a pair $(A, \phi_A)$ of an associative algebra with a derivation is called an AssDer pair. 
 %This algebraic structure form a nice category and some invariants to study this new category are developed in this paper.
 It is also known as a differential algebra in the literature and is itself a significant object of study in areas such as differential Galois theory \cite{magid}. Derivations on unital, commutative associative algebras are also related to Lie-Rinehart algebras (cf. Proposition \ref{derivation-lie-rinehart}).
  %Recently, the authors in \cite{ban} defined Hochschild homology for AssDer pairs motivated by noncommutative geometry. 
  
\medskip  
  
  Here we construct a cohomology for an AssDer pair, study their central extensions, and formal one-parameter deformations.  We relate the cohomology of an AssDer pair with the cohomology of the corresponding commutator LieDer pair. This cohomology might be a starting point to study cyclic theory for AssDer pairs. Additionally, we define and study $2$-derivations on associative $2$-algebras and relate them with homotopy derivations of $2$-term $A_\infty$-algebras.
  
  \medskip
  
  In section \ref{sec2}, we study representations and cohomology of AssDer pairs. Let $(A, \phi_A)$ be an AssDer pair. A representation of it consists of an $A$-bimodule $M$ together with a linear map $\phi_M$ which is compatible with the left and right actions of $A$ on $M$. It turns out that any AssDer pair is a representation of itself. Given a representation $(M, \phi_M)$, the pair $(M^*, - \phi_M^*)$ is also a representation, where $M^*$ is equipped with the $A$-bimodule structure dual to $M$ (cf. Proposition \ref{dual-birep}). Given a representation of an AssDer pair, one can construct a semidirect product AssDer pair (cf. Proposition \ref{semi-d}). Next, we study the cohomology of an AssDer pair with coefficients in a representation. This cohomology is a follow-up to the Hochschild cohomology of the associative structure and a factor modified by the fixed derivation. Like Hochschild cohomology, we show that the cohomology of an AssDer pair with coefficients in itself carries a degree $-1$ graded Lie bracket (cf. Proposition \ref{deg-min}). 
Next, we construct a functor $U: \mathrm{\bf LieDer} \rightarrow \mathrm{\bf AssDer}$ from the category of LieDer pairs to the category of AssDer pairs.
  
  \medskip

In Sections \ref{sec-ext}, we study extensions of an AssDer pair by a trivial AssDer pair, called central extensions. We show that isomorphism classes of central extensions are classified by the second cohomology of the AssDer pair with coefficients in the trivial representation (cf. Theorem \ref{central-fin-thm}). Next, we study extensions of a pair of derivations in a central extension of associative algebras. Given a central extension of associative algebras $0 \rightarrow M \xrightarrow{i} \hat{A} \xrightarrow{p} A \rightarrow 0$ and a pair of derivations $(\phi_A, \phi_M) \in \mathrm{Der}(A) \times \mathrm{Der}(M)$, we associate a second cohomology class in the Hochschild cohomology of $A$ with trivial representation $M$ (cf. Proposition \ref{obs-class-cen}), called the obstruction class. When this cohomology class is null, the pair of derivations $(\phi_A, \phi_M)$ is extensible to a derivation $\phi_{\hat{A}} \in \mathrm{Der}(\hat{A})$ which makes the above sequence into an exact sequence of AssDer pairs (cf. Theorem \ref{thm-obs-tri}). 
%Finally, we consider abelian extensions of an AssDer pair by an arbitrary representation and show that isomorphism classes of such extensions are classified by the second cohomology group of the AssDer pair (cf. Theorem \ref{abl-fin-thm}).

\medskip

In Section \ref{sec-def}, we study formal one-parameter deformations of AssDer pairs following the classical approach of Gerstenhaber for associative algebras \cite{gers} and Nijenhuis-Richardson for Lie algebras \cite{nij-ric}. For this, we deform both the associative product as well as the given derivation. 
%Our study of deformations of AssDer pairs leads to the question of deformation quantization of Poisson algebras with Poisson derivations which we will trace in a future project (cf. Remark \ref{quan-poisson-der}).  
Here we will show that the vanishing of the second cohomology of an AssDer pair with coefficients in itself implies that the AssDer pair is rigid (cf. Theorem \ref{h-two}, Remark \ref{h-two-rem}). Given a finite order deformation of an AssDer pair, we associate a third cohomology class, called the obstruction class of the deformation (cf. Proposition \ref{obs-class}). If the class is trivial, then the deformation extends to a deformation of the next order (cf. Theorem \ref{obs-class-zero}). We also consider automorphisms of the deformed AssDer pair and their extensions (cf. Theorem \ref{auto-theorem}).

\medskip

Strongly homotopy associative algebras ($A_\infty$-algebras) were introduced by Stasheff to recognize loop spaces \cite{stasheff}.
In Section \ref{sec-hd}, we consider homotopy derivations on $A_\infty$-algebras whose underlying graded vector space is concentrated in degrees $0$ and $1$ \cite{loday, doubek-lada}. We denote the category of $2$-term $A_\infty$-algebras with homotopy derivations by $2\mathrm{\bf AssDer}_\infty$. Homotopy derivations on skeletal $A_\infty$-algebras are characterized by third cocycles of AssDer pairs (cf. Proposition \ref{skeletal-chr}) and `strict' homotopy derivations on strict $A_\infty$-algebras are characterized by crossed modules of AssDer pairs (cf. Proposition \ref{strict-chr}).
%We also find a generalization of this result which classify the $n$-th cohomology of AssDer pair by suitable $A_\infty$-algebras with homotopy derivations.

\medskip

In \cite{baez}, Baez and Crans introduced Lie $2$-algebras as the categorification of Lie algebras. They also showed that the category of $2$-term $L_\infty$-algebras and the category of Lie $2$-algebras are equivalent. This result has been extended to various other algebraic structures, including groups, Leibniz algebras and twisted associative algebras \cite{baez-lauda, baez-j, sheng-liu, das}. In section \ref{sec-cat}, we introduce the categorification of AssDer pairs. More precisely, we study AssDer pair structures on a $2$-vector space. We call such an object an AssDer $2$-pair. The category of AssDer $2$-pairs and morphisms between them is denoted by $\mathrm{\bf AssDer2}$. Finally, we show that the categories $2\mathrm{\bf AssDer}_\infty$ and $\mathrm{\bf AssDer2}$ are equivalent (cf. Theorem \ref{final-thrm}).

\medskip

In the whole paper, we assume that $\mathbb{K}$ is a fixed field of characteristic zero, all the vector spaces are over the field $\mathbb{K}$ and maps are $\mathbb{K}$-linear maps unless otherwise stated. Let $\mathsf{XAlg}$ be any given type of algebraic structure. Then by ${\bf 2XAlg_\infty}$, we denote the category of $2$-term $\mathsf{XAlg}_\infty$-algebras and by ${\bf XAlg2}$, we denote the category of $\mathsf{XAlg} ~2$-algebras.
  
\section{AssDer pairs}\label{sec2}
%In this section, we introduce representations and cohomology of AssDer pairs. We also relate this cohomology with the cohomology of LieDer pairs as introduced in \cite{tang-f-sheng}.

Let $A$ be an associative algebra. An  $A$-bimodule (also called a representation of $A$) is a vector space $M$ together with two linear maps $l : A \otimes M \rightarrow M, (a,m) \mapsto am$ and $r : M \otimes A \rightarrow M, (m,a) \mapsto ma$ satisfying
\begin{align*}
(ab) m = a(bm), \quad (am)b  = a(mb) ~~~ \text{ and } ~~~ (ma) b = m (ab),
\end{align*}
for all $a, b \in A$ and $m \in M$. It follows that $A$ is a bimodule over the associative algebra $A$ itself with the left and right actions given by the algebra multiplication. We call this the adjoint bimodule (representation).

A derivation on $A$ with values in the $A$-bimodule $M$ is given by a linear map $\phi : A \rightarrow M$ that satisfies
%\begin{align*}
$$\phi (a b) = \phi(a) b  +  a \phi (b), ~~~ \text{ for } a, b \in A.$$
%\end{align*}
%Derivations are $1$-cocycles in the Hochschild complex of the associative algebra $A$ with coefficients in $M$ \cite{hoch}. See also \cite{loday-book} for more details. 

Our main object in this paper is a pair $(A, \phi_A)$ in which $A$ is an associative algebra and $\phi_A: A \rightarrow A$ is a derivation on $A$ with values in the adjoint representation. Thus, $\phi_A$ satisfies
\begin{align*}
\phi_A ( a b ) = \phi_A (a ) b + a \phi_A (b),  ~~~ \text{ for } a, b \in A.
\end{align*}
Such a pair $(A, \phi_A )$ is called an AssDer pair. Here we give a few examples of AssDer pairs. 
\begin{exam}\label{ass-der-exam}
\begin{itemize}
\item[(i)] The notion of derivation is a generalization of the usual derivative of functions. For instance, if $A = \mathbb{K} [x_1, \ldots, x_n]$ is the polynomial algebra in $n$ variables, then for $1 \leq i \leq n$, the partial derivatives $\phi_i = \frac{\partial}{\partial x_i}$ are derivations on $A$. In fact, the space of derivations on $A$ is linearly spanned by $\{ \phi_i \}_{ 1 \leq i \leq n}.$
\item[(ii)] Any derivation on the space $C^\infty (M)$ of smooth functions on a manifold $M$ is given by a vector field. Therefore $(C^\infty(M), X)$ is an AssDer pair, for any vector field $X$ on $M$.
\item[(iii)] A  (non-commutative) Poisson algebra is an associative algebra $P$ together with a Lie bracket $\{ ~, ~\}$ on it which is a derivation on each entry for the associative product. It follows that if $(P, \{~, ~\})$ is a (noncommutative) Poisson algebra, then for any $a \in P$, the linear map $\phi_a = \{ a, ~ \}$ is a derivation for the associative product. Hence $(P, \phi_a)$ is an AssDer pair.
\item[(iv)] Let $V$ be a vector space and $d : V \rightarrow V$ be a linear map. Consider the reduced tensor algebra $\overline{T}(V) = \oplus_{n \geq 1} V^{\otimes n}$ with the concatenation product. The linear map $d$ induces a linear map 
$\overline{d} : \overline{T}(V) \rightarrow \overline{T}(V)$ by
\begin{align*}
\overline{d} ( v_1 \otimes \cdots \otimes v_n ) = \sum_{i=1}^n v_1 \otimes \cdots \otimes dv_i \otimes \cdots \otimes v_n.
\end{align*}
It is easy to verify the $\overline{d}$ is a derivation on $\overline{T}(V)$. Hence  $(\overline{T}(V), \overline{d})$ is an AssDer pair. Any derivation on $\overline{T}(V)$ arises in this way.
\end{itemize}
\end{exam}

\medskip

%\begin{remark}\label{ass-der-lie-r}
A {\em Lie-Rinehart algebra} \cite{hueb} is a triple $(A, L, \rho)$ where $A$ is a commutative associative algebra, $(L, [~,~]_L)$ is a Lie algebra with $L$ a left $A$-module and an $A$-module map $\rho: L \rightarrow \mathrm{Der}(A)$ which is a morphism of Lie algebras satisfying the following Leibniz rule
\begin{align*}
[X, aY]_L = a [X, Y]_L + \rho (X) (a) Y, \text{ for } X, Y \in L, a \in A. 
\end{align*}

Let $A$ be a commutative associative algebra and $\phi_A$ be a derivation on $A$. Then $\phi_A$ induces a Lie bracket on $A$ given by
\begin{align}\label{lie-from-der}
[a, b ]_{\phi_A} := a \phi_A (b) - \phi_A (a) b, ~~~ \text{ for } a, b \in A.
\end{align}
We denote this Lie algebra by $A_{\phi_A}.$ This Lie bracket additionally satisfies the Leibniz rule
\begin{align}\label{leib-ru}
[a, fb]_{\phi_A} = f [ a, b ]_{\phi_A} + \rho (a)(f) b, ~~~ \text{ for } a, b, f \in A,
\end{align}
where $\rho : (A, [~,~]_{\phi_A}) \rightarrow (\mathrm{Der}(A), [~, ~])$ is the $A$-linear Lie algebra morphism given by $\rho (a) = a \phi_A$. In other words, the triple $(A, A_{\phi_A}, \rho)$ is a Lie-Rinehart algebra. The next proposition says that any Lie-Rinehart algebra structure on $(A, A)$ arises by a derivation in this way when $A$ is unital. Thus, to better understand Lie-Rinehart algebra structures on $(A, A)$ one needs to know derivations on $A$.
%In Remark \ref{def-lr-alg}, we observe that this association is compatible with deformation theories of AssDer pairs and Lie-Rinehart algebras.
%\end{remark}

\begin{prop}\label{derivation-lie-rinehart}
Let $A$ be an unital, commutative associative algebra. There is a one-to-one correspondence between derivations on $A$ and Lie-Rinehart algebra structures on $(A, A)$.
\end{prop}

\begin{proof}
Given a derivation $\phi_A$, we have constructed a Lie-Rinehart algebra $(A, A_{\phi_A}, \rho)$. Conversely, let $(A, A_{\mathrm{Lie}}, \rho)$ be a Lie-Rinehart algebra. Define $\phi_A : A \rightarrow A$ by $\phi_A = \rho (1)$, where $1$ is the unit of $A$. By definition, $\phi_A$ is a derivation on $A$. Moreover, we have $\rho (a) = a \rho (1) = a \phi_A$. Hence, for $a, b \in A$,
\begin{align*}
[a, b ]_{\mathrm{Lie}} = b [a,1]_{\mathrm{Lie}} + \rho (a) (b) 
= - \rho_A (a) + a \phi_A (b) = [a, b]_{\phi_A}.
\end{align*}
Therefore, the Lie-Rinehart algebra $(A, A_{\mathrm{Lie}}, \rho)$ is obtained from the derivation $\phi_A$. Finally, the above two correspondences are inverses to each other.
\end{proof}

\begin{remark}
In this remark, we will show that the Witt algebra is obtained from a derivation in Laurent polynomial algebra. Let $A = \mathbb{K}[x, x^{-1}]$ be the Laurent polynomial algebra. Consider the derivation $\phi_A : A \rightarrow A$ given by $\phi_A (x^n ) = - n x^{n-1}$, for $n \in \mathbb{Z}$. It follows from (\ref{lie-from-der}) that $A = \mathbb{K}[x, x^{-1}]$ carries a Lie algebra structure given by
\begin{align}\label{laurent-pol}
[x^m, x^n ] = - x^m ( n x^{n-1}) + (m x^{m-1}) x^n = (m-n) x^{m+n-1}, \text{ for } x^m, x^n \in A, m, n \in \mathbb{Z}. 
\end{align}
Consider the basis $\{ l_n \}_{n \in \mathbb{Z}}$ for $A$, where $l_n = x^{n+1}$, $n \in \mathbb{Z}$. Then the Lie bracket (\ref{laurent-pol}) reads as $[l_m, l_n] = (m-n)~ l_{m+n}$, for $m,n\in \mathbb{Z}$. This is precisely the Witt algebra structure on $A = \text{ span }\{l_n\}_{n \in \mathbb{Z}}$.
\end{remark}

\begin{defn}
Let $(A, \phi_A)$ and $(B, \phi_B)$ be two AssDer pairs. A morphism between them consists of an algebra map $f: A \rightarrow B$ that commutes with derivations, i.e.
%\begin{align*}
$f \circ \phi_A = \phi_B \circ f.$
%\end{align*}
\end{defn}
We denote the category of AssDer pairs together with morphisms between them by {\bf AssDer}.

Let $(V, d)$ be a vector space together with a linear map. The free AssDer pair over $(V, d)$ is an AssDer pair $(\mathcal{F}(V), \phi_{\mathcal{F}(V)} )$ equipped with a linear map $i : V \rightarrow \mathcal{F}(V)$ that satisfies $\phi_{\mathcal{F}(V)} \circ i = i \circ d$ and the following universal condition holds:
for any AssDer pair $(A, \phi_A)$ and a linear map $f : V \rightarrow A$ satisfying $\phi_A \circ f =  f \circ d$, there exists a unique AssDer pair morphism $\widetilde{f} : (\mathcal{F}(V), \phi_{\mathcal{F}(V)} ) \rightarrow (A, \phi_A)$ such that $\widetilde{f} \circ i = f.$

It follows that the free AssDer pair over $(V, d)$ is well-defined up to a unique isomorphism.

\begin{prop}
Let $(V, d)$ be a vector space together with a linear map. Then $(T(V), \overline{d})$ $(\mathrm{resp.} (\overline{T}(V), \overline{d}))$ equipped with the inclusion map $i$ is free unital (resp. nonunital) AssDer pair over $(V, d).$
\end{prop}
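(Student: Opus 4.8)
The plan is to verify directly that $(T(V),\overline d)$ together with the inclusion $i\colon V\hookrightarrow T(V)$ satisfies the universal property defining the free unital AssDer pair over $(V,d)$; the non-unital statement follows by the identical argument with the free non-unital associative algebra $\overline T(V)$ in place of $T(V)$. First I would observe that the compatibility $\overline d\circ i=i\circ d$ is immediate from the defining formula for $\overline d$, since on $V=V^{\otimes 1}$ the sum reduces to the single term $\overline d(v)=dv$. So the content is: for every AssDer pair $(A,\phi_A)$ and every linear map $f\colon V\to A$ with $\phi_A\circ f=f\circ d$, there is a unique morphism of AssDer pairs $\widetilde f\colon (T(V),\overline d)\to(A,\phi_A)$ with $\widetilde f\circ i=f$.

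For existence, I would invoke the classical universal property of the tensor algebra: $T(V)$ is the free unital associative algebra on $V$, so there is a unique algebra homomorphism $\widetilde f\colon T(V)\to A$ extending $f$, namely $\widetilde f(v_1\otimes\cdots\otimes v_n)=f(v_1)\cdots f(v_n)$. The only thing to check is that this $\widetilde f$ intertwines the derivations, i.e.\ $\widetilde f\circ\overline d=\phi_A\circ\widetilde f$. Regard $A$ as a $T(V)$-bimodule via $\widetilde f$. Then $\widetilde f\circ\overline d$ is a derivation from $T(V)$ to $A$, being the composite of the derivation $\overline d$ of $T(V)$ with the algebra map $\widetilde f$; and $\phi_A\circ\widetilde f$ is also a derivation from $T(V)$ to $A$, being the composite of the algebra map $\widetilde f$ with the derivation $\phi_A$ of $A$. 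A derivation out of $T(V)$ is determined by its values on the generators $i(V)$ (this is the inductive content of the Leibniz rule, unwinding $\overline d(x\otimes y)=\overline d(x)\otimes y+x\otimes\overline d(y)$ against $\widetilde f(x\otimes y)=\widetilde f(x)\widetilde f(y)$ by induction on tensor length), and on $i(V)$ we have $(\widetilde f\circ\overline d)(i(v))=\widetilde f(i(dv))=f(dv)$ and $(\phi_A\circ\widetilde f)(i(v))=\phi_A(f(v))$, which coincide by hypothesis. Hence $\widetilde f\circ\overline d=\phi_A\circ\widetilde f$, so $\widetilde f$ is a morphism of AssDer pairs.

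For uniqueness, I would note that any morphism of AssDer pairs $g\colon(T(V),\overline d)\to(A,\phi_A)$ with $g\circ i=f$ is in particular an algebra homomorphism $T(V)\to A$ extending $f$, hence equals $\widetilde f$ by the uniqueness clause in the universal property of the tensor algebra. This gives the universal property and hence identifies $(T(V),\overline d)$ with the free unital AssDer pair over $(V,d)$; replacing $T(V)$ by $\overline T(V)$ throughout gives the non-unital case.

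The one step worth spelling out, and the only place where anything beyond a citation to the tensor-algebra universal property is needed, is the compatibility $\widetilde f\circ\overline d=\phi_A\circ\widetilde f$. I expect no genuine obstacle here: either one argues abstractly that two derivations $T(V)\to A$ agreeing on generators agree, or one writes the explicit induction, in which both $\widetilde f(\overline d(v_1\otimes\cdots\otimes v_n))$ and $\phi_A(\widetilde f(v_1\otimes\cdots\otimes v_n))$ expand to $\sum_{k=1}^n f(v_1)\cdots\phi_A(f(v_k))\cdots f(v_n)$, the first by the Leibniz rule for $\overline d$ and multiplicativity of $\widetilde f$, the second by iterating the Leibniz rule for $\phi_A$ and using $\phi_A(f(v_k))=f(dv_k)$ only implicitly.
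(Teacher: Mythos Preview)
Your proof is correct and complete. The paper itself states this proposition without proof, so there is nothing to compare against; your argument---using the classical universal property of $T(V)$ to produce the algebra map $\widetilde f$, then checking $\widetilde f\circ\overline d=\phi_A\circ\widetilde f$ by observing that both sides are derivations $T(V)\to A$ agreeing on generators---is exactly the expected and standard verification.
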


\subsection{Representations and cohomology of AssDer pairs}

\begin{defn}
Let $(A, \phi_A)$ be an AssDer pair. A left module over it consists of a pair $(M, \phi_M)$ in which $M$ is a left $A$-module and $\phi_M : M \rightarrow M$ is a linear map satisfying
\begin{align}
\phi_M ( a m ) = \phi_A (a) m + a \phi_M (m), ~ \text{ for } a \in A, m \in M. \label{rep-1-eqn}
\end{align}
\end{defn}
Similarly, a right module over  $(A, \phi_A)$ is a pair $(M, \phi_M)$ in which $M$ is a right $A$-module and $\phi_M : M \rightarrow M$ is a linear map satisfying
\begin{align}
\phi_M (m a ) = \phi_M (m) a + m \phi_A (a), ~ \text{ for } a \in A, m \in M. \label{rep-2-eqn}
\end{align}
A bimodule (representation) over $(A, \phi_A)$ is a pair $(M, \phi_M)$ which is both a left module and a right module over $(A, \phi_A)$ and $M$ is an $A$-bimodule, i.e. $(am)b = a (mb),$ for all $a, b \in A$ and $m \in M$. It follows that the AssDer pair $(A, \phi_A)$ is a representation of itself for the adjoint bimodule structure on $A$.

%A representation of it consists of a pair $(M, \phi_M)$ in which $M = (M, l, r)$ is an $A$-bimodule and $\phi_M : M \rightarrow M$ is a linear map satisfying
%\begin{align}
%\phi_M ( a m ) = \phi_A (a) m + a \phi_M (m), \label{rep-1-eqn}\\
%\phi_M (m a ) = \phi_M (m) a + m \phi_A (a). \label{rep-2-eqn}
%\end{align}

\begin{prop}\label{dual-birep}
Let $(M, \phi_M)$ be a representation of the AssDer pair $(A, \phi_A)$. Then $(M^*, - \phi_M^*)$ is also a representation of $(A, \phi_A)$ where the $A$-bimodule structure on $M^*$ is given by
\begin{align*}
~&A \otimes M^* \rightarrow M^* \qquad \qquad \qquad  M^* \otimes A \rightarrow M^* \\
~&(a f)(m) = f (ma ) ~~~ \qquad \qquad \quad ( fa) (m) = f ( am ),
\end{align*}
for $a \in A, f \in M^*$ and $m \in M.$
\end{prop}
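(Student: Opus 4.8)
The plan is to verify directly that the given $A$-bimodule structure on $M^*$ is well-defined, and then that the pair $(M^*, -\phi_M^*)$ satisfies the three defining axioms of a representation: the left-module compatibility \eqref{rep-1-eqn}, the right-module compatibility \eqref{rep-2-eqn}, and the bimodule condition $(af)b = a(fb)$. First I would check that the formulas $(af)(m) = f(ma)$ and $(fa)(m) = f(am)$ do define a bimodule structure on $M^*$; this is the standard dual bimodule and the three associativity-type identities for $M^*$ follow by evaluating on an arbitrary $m \in M$ and using the corresponding identities for $M$ (with the order of the algebra elements reversed, which is exactly why the actions are defined with $ma$ and $am$ rather than $am$ and $ma$). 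I would state this step briefly since it is classical.

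The substantive point is the compatibility of $-\phi_M^*$ with these actions. Recall that $\phi_M^* : M^* \to M^*$ is defined by $(\phi_M^* f)(m) = f(\phi_M(m))$. To verify \eqref{rep-1-eqn} for $(M^*, -\phi_M^*)$, I would take $a \in A$, $f \in M^*$, and evaluate both sides of
\[
(-\phi_M^*)(af) = \phi_A(a)\cdot(-\phi_M^* f) \cdot \text{?}
\]
— more precisely, the left-module axiom reads $(-\phi_M^*)(af) = \phi_A(a)\,f + a\,(-\phi_M^* f)$ — on an element $m \in M$. The left-hand side is $-f(\phi_M(ma))$. Expanding $\phi_M(ma) = \phi_M(m)a + m\,\phi_A(a)$ via \eqref{rep-2-eqn} for the original representation $(M,\phi_M)$, and then rewriting $f(\phi_M(m)a) = (fa)(\phi_M(m)) = (\phi_M^*(fa))(m)$ and $f(m\,\phi_A(a)) = (f\,\phi_A(a))(m)$, one matches the right-hand side term by term once the dual action formulas are substituted. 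The verification of the right-module axiom \eqref{rep-2-eqn} for $(M^*, -\phi_M^*)$ is entirely parallel, using \eqref{rep-1-eqn} for $(M, \phi_M)$ in place of \eqref{rep-2-eqn}.

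The main thing to be careful about — and the only place an error is likely to creep in — is bookkeeping of signs and of the left/right swap: the dual of a left action becomes a right-type expression and vice versa, and the minus sign in $-\phi_M^*$ must be tracked consistently so that the two mixed terms in the Leibniz-type identity land on the correct sides. I expect no genuine obstacle beyond this; once the dual action conventions are fixed, each axiom reduces after evaluation on $m \in M$ to one of the already-assumed identities for $(M, \phi_M)$. I would close by noting that the bimodule condition $(af)b = a(fb)$ on $M^*$ holds because $M$ is an $A$-bimodule, completing the proof that $(M^*, -\phi_M^*)$ is a representation of $(A, \phi_A)$.
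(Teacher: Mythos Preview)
Your strategy is exactly the paper's: cite the dual bimodule structure on $M^*$ as standard, then verify the two derivation compatibilities by evaluating on an arbitrary $m\in M$ and invoking (\ref{rep-2-eqn}) for the left-module axiom and (\ref{rep-1-eqn}) for the right-module axiom.

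The only issue is that your sketched computation contains precisely the bookkeeping slips you warned yourself about. The left-hand side $(-\phi_M^*)(af)$ evaluated at $m$ is $-(af)(\phi_M(m)) = -f\bigl(\phi_M(m)\,a\bigr)$, not $-f(\phi_M(ma))$: the dual map $\phi_M^*$ places $\phi_M$ on the argument $m$ first, and only then does the action formula $(af)(m')=f(m'a)$ apply. Likewise your subsequent identifications have the left and right dual actions interchanged; with the conventions in the statement one has $f(m\,\phi_A(a)) = (\phi_A(a)\,f)(m)$ and $f(\phi_M(m)\,a)=(af)(\phi_M(m))$, whereas $(fa)(m')=f(am')$ gives $(fa)(\phi_M(m))=f(a\,\phi_M(m))$ and $(f\phi_A(a))(m)=f(\phi_A(a)\,m)$. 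Once these are corrected the argument goes through exactly as in the paper: from $-f(\phi_M(m)a)$ use (\ref{rep-2-eqn}) in the form $\phi_M(m)a = \phi_M(ma) - m\phi_A(a)$ to get $f(m\phi_A(a)) - f(\phi_M(ma)) = (\phi_A(a)f)(m) + \bigl(a(-\phi_M^*f)\bigr)(m)$, and symmetrically for the right axiom.
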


\begin{proof}
The fact that $M^*$ is an $A$-bimodule is standard \cite{loday-book}. To verify that $(M^*, - \phi_M^*)$ is a representation of the AssDer pair, we observe that 
\begin{align*}
\langle - \phi_M^* ( af) , m \rangle = \langle af , - \phi_M (m) \rangle = f ( - \phi_M (m) a ) 
=~& f ( m \phi_A (a) ) - f ( \phi_M (ma))  \qquad (\text{by }~~ (\ref{rep-2-eqn})) \\
=~& \langle \phi_A (a) f , m \rangle - \langle a \phi_M^* (f), m \rangle.
\end{align*}
This shows that $- \phi_M^* ( af) = \phi_A (a) f + a ( - \phi_M^* (f)).$ Similarly, we have
\begin{align*}
\langle - \phi_M^* ( fa ) , m \rangle =  \langle fa, - \phi_M (m) \rangle = f ( - a \phi_M (m)) 
=~& f ( \phi_A (a) m) - f ( \phi_M (am))  \qquad (\text{by }~~ (\ref{rep-1-eqn})) \\
=~& \langle f \phi_A (a), m \rangle - \langle \phi_M^*(f) a, m \rangle.
\end{align*}
This shows that $- \phi_M^* ( fa ) = - \phi_M^*(f) a + f \phi_A (a)$.
\end{proof}

It follows from the above Proposition, $(A^*, - \phi_A^*)$ is a representation of the AssDer pair $(A, \phi_A)$. This is called the coadjoint representation of the AssDer pair $(A, \phi_A).$

Given an associative algebra and a bimodule over it, one can construct a semidirect product associative algebra \cite{loday-book}. The following result generalizes it to AssDer pairs. The proof is straightforward.

\begin{prop}\label{semi-d}
Let $(A, \phi_A)$ be an ${AssDer}$ pair and $(M, \phi_M)$ be a representation of it. Then $(A \oplus M, \phi_A \oplus \phi_M)$ is an ${AssDer}$ pair where the associative structure on $A \oplus M$ is given by the semi-direct product
\begin{align*}
(a \oplus m) \cdot (b \oplus n) = (ab \oplus an + mb).
\end{align*}
\end{prop}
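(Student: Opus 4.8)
The plan is to verify directly that $(A \oplus M, \phi_A \oplus \phi_M)$ satisfies the defining condition of an AssDer pair, namely that $\phi_A \oplus \phi_M$ is a derivation for the semi-direct product multiplication. Two facts must be assembled: first, that $A \oplus M$ with the given product is an associative algebra, and second, that the map $\phi_A \oplus \phi_M$ respects that product in the Leibniz sense. The first fact is the classical semi-direct product construction for an associative algebra and a bimodule over it (see \cite{loday-book}), so I would simply cite it; the associativity of $(a,m)\cdot(b,n) = (ab, an+mb)$ follows from the associativity of $A$ together with the three bimodule axioms for $M$.

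For the second and essential point, I would compute both sides of the Leibniz identity for arbitrary elements $(a,m), (b,n) \in A \oplus M$. On the left,
\begin{align*}
(\phi_A \oplus \phi_M)\big( (a,m)\cdot(b,n) \big) = (\phi_A \oplus \phi_M)(ab, an+mb) = \big( \phi_A(ab),\ \phi_M(an+mb) \big).
\end{align*}
On the right,
\begin{align*}
(\phi_A \oplus \phi_M)(a,m) \cdot (b,n) + (a,m)\cdot (\phi_A \oplus \phi_M)(b,n)
= \big( \phi_A(a)b + a\phi_A(b),\ \phi_A(a)n + \phi_M(m)b + a\phi_M(n) + m\phi_A(b) \big).
\end{align*}
The first components agree because $\phi_A$ is a derivation on $A$. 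For the second components, I would expand $\phi_M(an+mb) = \phi_M(an) + \phi_M(mb)$ and then apply the left-module compatibility \eqref{rep-1-eqn}, which gives $\phi_M(an) = \phi_A(a)n + a\phi_M(n)$, and the right-module compatibility \eqref{rep-2-eqn}, which gives $\phi_M(mb) = \phi_M(m)b + m\phi_A(b)$. Adding these reproduces exactly the second component on the right-hand side, completing the verification.

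There is no real obstacle here: the statement is a routine bookkeeping exercise, and the only thing to be careful about is invoking the correct module-compatibility axiom — \eqref{rep-1-eqn} for the term coming from the left action and \eqref{rep-2-eqn} for the term coming from the right action. I would also note in passing that the inclusion $A \hookrightarrow A \oplus M$, $a \mapsto (a,0)$, and the projection $A \oplus M \to A$ are morphisms of AssDer pairs, since this is what makes the semi-direct product useful for the extension theory developed in later sections.
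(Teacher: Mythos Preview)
Your proof is correct and follows essentially the same approach as the paper: both cite the classical associativity of the semi-direct product and then verify the Leibniz identity for $\phi_A \oplus \phi_M$ by a direct computation that expands the second component using the compatibility conditions \eqref{rep-1-eqn} and \eqref{rep-2-eqn}. The only cosmetic difference is that the paper expands the left-hand side forward until it matches the right-hand side, whereas you compute both sides separately and compare; the content is identical.
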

%\begin{proof}
%It is known that $A \oplus M$ with the above product is an associative algebra \cite{loday-book}. Thus, it is enough to show that $\phi_A  \oplus \phi_M: A \oplus M \rightarrow A \oplus M$ is a derivation. Observe that
%\begin{align*}
%(\phi_A  \oplus \phi_M) ( (a \oplus m) \cdot (b \oplus n) ) 
%&= ( \phi_A (ab) \oplus \phi_M (an) + \phi_M (mb)  ) \\
%&= (\phi_A (a) b + a \phi_A (b) \oplus \phi_A (a)n + a \phi_M (n) + \phi_M (m) b + m \phi_A (b) ) \\
%&= ( \phi_A (a) b \oplus \phi_A (a) n + \phi_M (m) b) ~+~ ( a \phi_A(b) \oplus a \phi_M (n) + m \phi_A (b) ) \\
%&= (\phi_A (a) \oplus \phi_M (m)) \cdot (b\oplus n) ~+~ (a\oplus m) \cdot (\phi_A(b) \oplus \phi_M (n) ) \\
%&= (\phi_A \oplus \phi_M )(a \oplus m) \cdot (b \oplus n) ~+~ (a \oplus m) \cdot (\phi_A \oplus \phi_M) (b \oplus n). 
%\end{align*}
%Hence the proof.
%\end{proof}

Let $A$ be an associative algebra and $M$ be an $A$-bimodule. Then the Hochschild cohomology of $A$ with coefficients in $M$ is given as follows \cite{hoch}. The $n$-th cochain group $C^n (A, M)$ is given by $C^n(A, M) := \text{Hom} (A^{\otimes n}, M)$ for $n \geq 0$ and the coboundary map $\delta_{\mathrm{Hoch}} : C^n (A, M) \rightarrow C^{n+1}(A, M)$ is given by
\begin{align}\label{hoch-diff-for}
\delta_{\mathrm{Hoch}} (f) (a_1, \ldots, a_{n+1}) =~& a_1 f (a_2, \ldots, a_{n+1}) + \sum_{i=1}^n (-1)^i~ f (a_1, \ldots, a_{i-1}, a_i a_{i+1}, \ldots, a_{n+1}) \\
~&+ (-1)^{n+1} f(a_1, \ldots, a_n) a_{n+1}. \nonumber
\end{align}

The corresponding cohomology groups are called the Hochschild cohomology groups of $A$ with coefficients in $M$.
It has been observed by Gerstenhaber \cite{gers0} that the graded vector space of Hochschild cochains $C^\star (A, A) = \oplus_n C^n (A, A)$ carries a degree $-1$ graded Lie bracket (called the Gerstenhaber bracket) given by
\begin{align}\label{gers-brkt}
[f, g] := f \circ g - (-1)^{(m-1)(n-1)} g \circ f, ~ \text{ for } f \in C^m (A, A), ~g \in C^n (A, A),
\end{align}
where $(f \circ g) (a_1, \ldots, a_{m+n-1})$ is defined as
\begin{align*}
%(f \circ g) (a_1, \ldots, a_{m+n-1}) =
 \sum_{i=1}^m (-1)^{(i-1) (n-1)} f (a_1, \ldots, a_{i-1}, g(a_i, \ldots, a_{i+n-1}), \ldots, a_{m+n-1}).
\end{align*}

Let $\mu : A^{\otimes 2} \rightarrow A$ denote the associative multiplication on $A$. With the above notations, the Hochschild coboundary map $\delta_{\mathrm{Hoch}}$ with coefficients in itself is given by
%\begin{align*}
$\delta_{\mathrm{Hoch}} f = (-1)^{n-1} [\mu, f], ~~~~ \text{ for } f \in C^n (A, A).$
%\end{align*}

%Next, we use Hochschild cohomology for associative algebras to define cohomology of AssDer pairs. In sections \ref{sec-ext} and \ref{sec-def}, we will show that such cohomology (with appropriate coefficients) is related to central extensions of AssDer pairs and governs the one-parameter formal deformation of AssDer pairs.

Let $(A, \phi_A)$ be an AssDer pair and $(M, \phi_M)$ be a representation of it. We aim to define the cohomology of the AssDer pair  $(A, \phi_A)$ with coefficients in $(M, \phi_M)$. Define the space $C^0_{\mathrm{AssDer}}(A, M)$ of $0$-cochains to be $0$ and the space $C^1_{\mathrm{AssDer}}(A, M)$ of $1$-cochains to be $\mathrm{Hom}(A, M)$. For $n \geq 2$, the space $C^n_{\mathrm{AssDer}} (A, M)$ of $n$-cochains to be defined by
\begin{align*}
C^n_{\mathrm{AssDer}}(A, M) = C^n (A, M) \oplus C^{n-1} (A, M).
\end{align*}
Before we introduce the coboundary operator, we define a new map $\delta: C^n (A, M) \rightarrow C^n (A, M)$ by
\begin{align*}
\delta f = \sum_{i=1}^n f \circ (\mathrm{id} \otimes \cdots \otimes \phi_A \otimes \cdots \otimes \mathrm{id}) - \phi_M \circ f.
\end{align*}
When $(M, \phi_M) = (A, \phi_A)$,  the map $\delta$ can be seen in terms of Gerstenhaber bracket as
%\begin{align*}
$\delta f = - [\phi_A, f].$
%\end{align*}

Finally, we define the coboundary map $\partial : C^n_{\mathrm{AssDer}} (A, M) \rightarrow C^{n+1}_{\mathrm{AssDer}} (A, M) $ by
\begin{align}\label{ass-der-diff}
\begin{cases}
\partial f = ( \delta_{\mathrm{Hoch}} f, - \delta f ), ~~~~ \text{ for } f \in C^1_{\mathrm{AssDer}} (A, M) = \mathrm{Hom} (A, M),\\
\partial (f, g) = ( \delta_{\mathrm{Hoch}} f,~ \delta_{\mathrm{Hoch}} g + (-1)^n  \delta f), ~~~~ \text{ for } (f, g) \in C^n_{\mathrm{AssDer}} (A, M). \end{cases}
\end{align}
To prove that $\partial^2 = 0$ we use the following lemma whose proof will be given after Proposition \ref{deg-min}.

\begin{lemma}\label{commute-d-d}
 $\delta_{\mathrm{Hoch}} \circ \delta = \delta \circ \delta_{\mathrm{Hoch}}.$
\end{lemma}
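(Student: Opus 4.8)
The plan is to reduce the identity to the special case of coefficients in the adjoint representation, where it becomes a consequence of the graded Jacobi identity for the Gerstenhaber bracket, and then to transfer it to an arbitrary representation by means of the semi-direct product of Proposition \ref{semi-d}.

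\emph{Step 1: reduction to the case $(M,\phi_M)=(A,\phi_A)$.} Given a representation $(M,\phi_M)$ of $(A,\phi_A)$, form the semi-direct product AssDer pair $(\widetilde A,\widetilde\phi):=(A\oplus M,\ \phi_A\oplus\phi_M)$. Embed $C^n(A,M)$ into $C^n(\widetilde A,\widetilde A)$ by extending a cochain by zero whenever one of its arguments lies in $M$ and regarding its values as elements of $M\subseteq\widetilde A$. Since $M$ is a square-zero ideal of $\widetilde A$ and $\widetilde\phi$ restricts to $\phi_A$ on $A$ and to $\phi_M$ on $M$, a direct check shows that this subspace is preserved by the Hochschild coboundary $\delta_{\mathrm{Hoch}}$ of $\widetilde A$ and by the map $\delta$ built from $\widetilde\phi$, and that on it these operators agree with the maps $\delta_{\mathrm{Hoch}}$ and $\delta$ attached to $(A,\phi_A)$ with coefficients in $(M,\phi_M)$. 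Hence it suffices to prove $\delta_{\mathrm{Hoch}}\circ\delta=\delta\circ\delta_{\mathrm{Hoch}}$ when $(M,\phi_M)=(A,\phi_A)$.

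\emph{Step 2: the self-coefficients case via the Gerstenhaber bracket.} For $f\in C^n(A,A)$ we use the formulas recalled before the statement, $\delta_{\mathrm{Hoch}}f=(-1)^{n-1}[\mu,f]$ and $\delta f=-[\phi_A,f]$, where $[\,\cdot\,,\,\cdot\,]$ is the Gerstenhaber bracket on $C^\star(A,A)$ and $\mu\in C^2(A,A)$, $\phi_A\in C^1(A,A)$ have degrees $1$ and $0$. Substituting these into both sides, the two normalizing signs cancel and the desired identity reduces to $[\mu,[\phi_A,f]]=[\phi_A,[\mu,f]]$ for all $f$. Now the derivation property of $\phi_A$ is exactly the statement $[\mu,\phi_A]=0$; inserting this into the graded Jacobi identity for the three homogeneous elements $\mu,\phi_A,f$ (together with graded antisymmetry) yields precisely $[\mu,[\phi_A,f]]=[\phi_A,[\mu,f]]$, which finishes the argument.

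\emph{Main obstacle.} The only genuine bookkeeping is the sign tracking: one must carry the parities $|\mu|=1$, $|\phi_A|=0$, $|f|=n-1$ through graded antisymmetry and graded Jacobi, and verify that the factors $(-1)^{n-1}$ relating $\delta_{\mathrm{Hoch}},\delta$ to the brackets cancel; this is routine. A reader preferring to avoid the graded-Lie formalism can instead prove the identity directly, by expanding $\delta_{\mathrm{Hoch}}\delta f$ and $\delta\delta_{\mathrm{Hoch}}f$ on $(a_1,\dots,a_{n+1})$ and matching terms: the cross terms obtained by applying $\phi_A$ to an internal product $a_ia_{i+1}$ or to an outer multiplication $a_1\cdot(-)$, $(-)\cdot a_{n+1}$ are absorbed using the Leibniz rule for $\phi_A$ and the representation identities \eqref{rep-1-eqn} and \eqref{rep-2-eqn}, while all remaining terms cancel in pairs; this is an elementary but longer alternative that bypasses Proposition \ref{semi-d}.
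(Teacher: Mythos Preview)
Your proposal is correct and follows essentially the same approach as the paper: both arguments treat the self-coefficients case via the Gerstenhaber bracket identities $\delta_{\mathrm{Hoch}}f=(-1)^{n-1}[\mu,f]$, $\delta f=-[\phi_A,f]$, $[\mu,\phi_A]=0$ together with the graded Jacobi identity, and then transfer the result to arbitrary coefficients by embedding $C^n(A,M)$ into $C^n(A\oplus M,A\oplus M)$ through the semi-direct product of Proposition~\ref{semi-d}. The only difference is the order of the two steps (you reduce first, the paper proves the adjoint case first), which is immaterial.
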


\begin{prop}
The map $\partial$ is a coboundary map, i.e. $\partial^2 = 0$.
\end{prop}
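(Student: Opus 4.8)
The plan is to verify $\partial^2 = 0$ directly, splitting into the two cases dictated by the piecewise definition (\ref{ass-der-diff}): cochains in degree $1$, and cochains in degree $n \geq 2$. The only genuinely nontrivial inputs are the classical identity $\delta_{\mathrm{Hoch}}^2 = 0$ for the Hochschild differential and the commutation relation of Lemma \ref{commute-d-d}; everything else is sign bookkeeping.

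First, for $f \in C^1_{\mathrm{AssDer}}(A,M) = \mathrm{Hom}(A,M)$ I would compute $\partial(\partial f) = \partial(\delta_{\mathrm{Hoch}} f, -\delta f)$. Since $(\delta_{\mathrm{Hoch}} f, -\delta f)$ lies in $C^2_{\mathrm{AssDer}}(A,M)$, applying the second line of (\ref{ass-der-diff}) with $n=2$ gives $\partial(\partial f) = (\delta_{\mathrm{Hoch}}^2 f,\ -\delta_{\mathrm{Hoch}}\delta f + \delta\,\delta_{\mathrm{Hoch}} f)$. The first slot vanishes because $\delta_{\mathrm{Hoch}}^2 = 0$, and the second slot vanishes by Lemma \ref{commute-d-d}.

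Next, for $(f_n, \overline{f}_n) \in C^n_{\mathrm{AssDer}}(A,M)$ with $n \geq 2$, I would apply the second line of (\ref{ass-der-diff}) twice:
\begin{align*}
\partial^2 (f_n, \overline{f}_n) &= \partial\big(\delta_{\mathrm{Hoch}} f_n,\ \delta_{\mathrm{Hoch}} \overline{f}_n + (-1)^n \delta f_n\big)\\
&= \Big(\delta_{\mathrm{Hoch}}^2 f_n,\ \delta_{\mathrm{Hoch}}^2 \overline{f}_n + (-1)^n \delta_{\mathrm{Hoch}} \delta f_n + (-1)^{n+1} \delta\,\delta_{\mathrm{Hoch}} f_n\Big).
\end{align*}
Again $\delta_{\mathrm{Hoch}}^2 = 0$ kills the first slot and the first summand of the second slot, and the remaining two summands cancel because $(-1)^n \delta_{\mathrm{Hoch}}\delta f_n + (-1)^{n+1}\delta\,\delta_{\mathrm{Hoch}} f_n = (-1)^n\big(\delta_{\mathrm{Hoch}}\delta - \delta\,\delta_{\mathrm{Hoch}}\big)f_n = 0$ by Lemma \ref{commute-d-d}. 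This exhausts all cases, so $\partial^2 = 0$.

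The only real work is Lemma \ref{commute-d-d} itself, whose proof is deferred, and I expect that to be the main obstacle. It comes down to checking on an arbitrary $f \in C^n(A,M)$ that inserting $\phi_A$ into the various tensor slots (the $\delta$ operation) commutes with the alternating sum of inner multiplications and outer actions (the $\delta_{\mathrm{Hoch}}$ operation), using the derivation identity $\phi_A(ab) = \phi_A(a)b + a\phi_A(b)$ together with the compatibilities (\ref{rep-1-eqn}) and (\ref{rep-2-eqn}) for $\phi_M$; the slot-by-slot terms either match directly or telescope via the Leibniz rule. In the special case $(M,\phi_M) = (A,\phi_A)$ this is transparent from the bracket descriptions $\delta_{\mathrm{Hoch}} f = (-1)^{n-1}[\mu, f]$ and $\delta f = -[\phi_A, f]$, the graded Jacobi identity for the Gerstenhaber bracket, and the vanishing $[\mu, \phi_A] = 0$ (which is exactly the statement that $\phi_A$ is a derivation of $\mu$).
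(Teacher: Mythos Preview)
Your proof is correct and follows exactly the same route as the paper: split into the degree-$1$ case and the degree-$\geq 2$ case, apply the two lines of (\ref{ass-der-diff}), and reduce everything to $\delta_{\mathrm{Hoch}}^2=0$ together with Lemma~\ref{commute-d-d}. Your closing remarks on how the lemma itself is handled (Gerstenhaber-bracket argument in the self-coefficient case via $[\mu,\phi_A]=0$ and graded Jacobi) also anticipate the paper's deferred proof.
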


\begin{proof}
For $f \in C^1_{\mathrm{AssDer}} (A, M)$, we have
\begin{align*}
\partial^2 f = \partial ( \delta_{\mathrm{Hoch}} f, - \delta f) = ( \delta_{\mathrm{Hoch}}^2 f, ~ - \delta_{\mathrm{Hoch}} \delta f + \delta \delta_{\mathrm{Hoch}} f ) = 0.
\end{align*}
For $(f, g ) \in C^n_{\mathrm{AssDer}} (A, M)$, we have
\begin{align*}
\partial^2 (f, g ) 
= ( \delta_{\mathrm{Hoch}}^2 f, ~ \delta_{\mathrm{Hoch}}^2 g + (-1)^n \delta_{\mathrm{Hoch}} \delta f + (-1)^{n+1} \delta \delta_{\mathrm{Hoch}} f ) = 0.
 \end{align*}
\end{proof}

\medskip

We denote the corresponding cohomology groups by $H^n_{\mathrm{AssDer}}(A, M)$, for $n \geq 1$.
Next, we will show that the cohomology of an AssDer pair with coefficients in itself carries a degree $-1$ graded Lie bracket.

\begin{prop}\label{deg-min}
The bracket
%\begin{align*}
$\llbracket ~, ~ \rrbracket : C^m_{\mathrm{AssDer}}(A, A) \times C^n_{\mathrm{AssDer}}(A, A) \rightarrow C^{m+n-1}_{\mathrm{AssDer}}(A, A)$ given by
%\end{align*}
\begin{align*}
\llbracket (f, g), (f', g') \rrbracket := ( [f, f'], (-1)^{m+1} [ f, g' ] + [ g. f'] )
\end{align*}
is a degree $-1$ graded Lie bracket on $\bigoplus_{n} C^n_{\mathrm{AssDer}}(A, A)$.
\end{prop}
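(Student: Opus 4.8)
The plan is to recognise $\bigl(\bigoplus_n C^n_{\mathrm{AssDer}}(A,A),\ \llbracket\,,\,\rrbracket\bigr)$ as an instance of the tensor product of a graded Lie algebra with a graded commutative algebra, so that the graded Lie axioms become formal. Write $\mathfrak{g}^\bullet$ for the Hochschild cochains $C^{\bullet+1}(A,A)$ equipped with the Gerstenhaber bracket (\ref{gers-brkt}), regraded so that $C^n(A,A)$ lies in degree $n-1$; this is a (degree $0$) graded Lie algebra once we adopt the convention $C^0(A,A)=0$, and it is closed under the bracket since $[f,g]\in C^{m+n-1}(A,A)$. Let $\Lambda=\mathbb{K}[\epsilon]/(\epsilon^2)$ be the exterior algebra on one generator $\epsilon$ in degree $1$, which is graded commutative with $\Lambda^0=\mathbb{K}\cdot 1$ and $\Lambda^1=\mathbb{K}\cdot\epsilon$. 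Then $\Lambda\otimes\mathfrak{g}^\bullet$ carries the standard graded Lie bracket
\[
[\,r\otimes x,\ s\otimes y\,]=(-1)^{|s|\,|x|}\,(rs)\otimes[x,y],
\]
whose graded antisymmetry and Jacobi identity follow from those of $[\,,\,]$ by a routine Koszul sign check.

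First I would identify the underlying graded vector spaces. In degree $k$ one has $(\Lambda\otimes\mathfrak{g}^\bullet)^k=(\Lambda^0\otimes\mathfrak{g}^k)\oplus(\Lambda^1\otimes\mathfrak{g}^{k-1})\cong C^{k+1}(A,A)\oplus C^k(A,A)$; setting $k=n-1$ this is precisely $C^n_{\mathrm{AssDer}}(A,A)$, the exceptional case $n=1$ included, since there the second summand is $C^0(A,A)=0$ and matches $C^1_{\mathrm{AssDer}}(A,A)=\mathrm{Hom}(A,A)$. Concretely I would use the isomorphism $\Phi\colon (f,\overline f)\mapsto 1\otimes f+\epsilon\otimes\overline f$; for $(f,\overline f)\in C^n_{\mathrm{AssDer}}(A,A)$ both summands sit in degree $n-1$, because $\overline f\in C^{n-1}(A,A)$ has degree $n-2$ in $\mathfrak{g}^\bullet$ and $\epsilon$ contributes $1$.

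The key step is to verify that $\Phi$ intertwines the two brackets. Expanding $[\,1\otimes f+\epsilon\otimes\overline f,\ 1\otimes g+\epsilon\otimes\overline g\,]$ with the displayed formula, the $\epsilon\otimes\epsilon$ term dies because $\epsilon^2=0$, the $1\otimes1$ term is $1\otimes[f,g]$, and the two mixed terms contribute $(-1)^{|f|}\epsilon\otimes[f,\overline g]+\epsilon\otimes[\overline f,g]$, where $|f|=m-1$; since $(-1)^{m-1}=(-1)^{m+1}$ this equals $\epsilon\otimes\bigl((-1)^{m+1}[f,\overline g]+[\overline f,g]\bigr)$, which is exactly $\Phi$ applied to the second component of $\llbracket(f,\overline f),(g,\overline g)\rrbracket$. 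Hence $\Phi$ is an isomorphism of graded Lie algebras, and in the original unshifted grading this says precisely that $\llbracket\,,\,\rrbracket$ is a degree $-1$ graded Lie bracket on $\bigoplus_n C^n_{\mathrm{AssDer}}(A,A)$.

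I expect the only real difficulty to be bookkeeping: one has to pin down the degree and sign conventions for the Gerstenhaber bracket and for the tensor product consistently, and then the factor $(-1)^{m+1}$ in the statement must emerge exactly, which it does once $\epsilon$ is pushed past $f$ and $(-1)^{m-1}$ is identified with $(-1)^{m+1}$. A reader who prefers not to invoke the tensor-product construction can instead run the same computation directly: graded antisymmetry of $\llbracket\,,\,\rrbracket$ reduces componentwise to that of $[\,,\,]$ after some sign cancellations, and the graded Jacobi identity for $\llbracket\,,\,\rrbracket$ splits, according to which of the three arguments carries the "bar", into the Gerstenhaber Jacobi identity applied once on the unbarred parts and once more on each of the barred parts; but the tensor-product packaging is the cleaner route and is the one I would write up.
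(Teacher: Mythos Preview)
Your proof is correct and takes a genuinely different route from the paper. The paper argues by direct expansion: it writes out $\llbracket (f,\overline f),\llbracket (g,\overline g),(h,\overline h)\rrbracket\rrbracket$ and the two terms on the right-hand side of the graded Jacobi identity, and then matches them component by component using the Jacobi identity for the Gerstenhaber bracket $[\,,\,]$. Your argument instead identifies $\bigoplus_n C^n_{\mathrm{AssDer}}(A,A)$ with $\Lambda\otimes\mathfrak{g}$, where $\Lambda=\mathbb{K}[\epsilon]/(\epsilon^2)$ with $|\epsilon|=1$ and $\mathfrak{g}$ is the shifted Hochschild complex, and invokes the standard fact that the tensor product of a graded commutative algebra with a graded Lie algebra is again a graded Lie algebra. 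Your sign check that $\Phi$ intertwines the brackets is accurate, and the convention $C^0(A,A)=0$ is harmless here since $\bigoplus_{n\geq 1}C^n(A,A)$ is closed under $[\,,\,]$ and matches the paper's definition $C^1_{\mathrm{AssDer}}(A,A)=\mathrm{Hom}(A,A)$.

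The paper's approach is entirely self-contained and makes the Jacobi identity visible by hand; yours is more conceptual, explains \emph{why} the bracket has the shape it does, and generalises immediately---for instance to an associative algebra with several derivations, or to the analogous LieDer bracket, simply by varying $\Lambda$. Either argument would be acceptable; if you submit yours, you might add one line noting that graded antisymmetry of $\llbracket\,,\,\rrbracket$ is inherited along $\Phi$ as well, since the paper's statement asks for a graded Lie bracket and not only the Jacobi identity.
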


\begin{proof}
First note that, since $[~, ~]$ is a degree $-1$ graded Lie bracket (Gerstenhaber bracket), we have
\begin{align}\label{deg-minus}
[f, [f', f'']] = [[f, f'], f'' ] + (-1)^{(m-1)(n-1)} [f', [f, f'']],
\end{align}
for $f \in \mathrm{Hom}(A^{\otimes m}, A), ~ f' \in \mathrm{Hom}(A^{\otimes n}, A)$ and $f'' \in \mathrm{Hom}(A^{\otimes p}, A)$. Next for any  $(f, g) \in C^m_{\mathrm{AssDer}}(A, A),$ $(f', g') \in C^n_{\mathrm{AssDer}}(A, A)$ and $(f'', g'') \in C^p_{\mathrm{AssDer}}(A, A),$ we have
\begin{align*}
\llbracket (f, g) , \llbracket (f', g') , (f'', g'') \rrbracket \rrbracket  &= \llbracket (f, g) , ([f', f''] , (-1)^{n+1} [f', g''] + [g', f''] ) \rrbracket \\
&= \big( [f, [f', f'']] , (-1)^{m+n+2} [f, [f', g'']] + (-1)^{m+1} [f, [g', f'' ]] + [ g, [f', f'']] \big).
\end{align*}
On the other hand,
\begin{align*}
&\llbracket \llbracket (f, g)  , (f', g') \rrbracket, (f'', g'')   \rrbracket + (-1)^{(m-1)(n-1) } \llbracket (f', g') , \llbracket (f, g) , (f'', g'') \rrbracket \rrbracket \\
&= \llbracket ([f,f'] ,~ (-1)^{m+1} [f, g'] + [ g, f']),~ (f'', g'') \rrbracket \\
&+ (-1)^{(m-1)(n-1) } \llbracket (f', g') , ~([f,f''], (-1)^{m+1} [f, g''] + [g, f'']) \rrbracket \\
&= \big( [[f,f'], f''], ~ (-1)^{m+n} [[f,f'], g''] + (-1)^{m+1} [[f, g'], f''] + [[ g, f'], f'' ]   \big)  \\
&+ (-1)^{(m-1)(n-1) } \big( [f', [f, f'']] ,~ (-1)^{n+1+m+1} [ f', [f, g'']] + (-1)^{n+1} [ f' , [g, f'']] + [g', [f, f'']]   \big). 
\end{align*}
Thus it follows from (\ref{deg-minus}) that 
\begin{align*}
\llbracket (f, g) , \llbracket (f', g') , (f'', g'') \rrbracket \rrbracket = \llbracket \llbracket (f, g)  , (f', g') \rrbracket, (f'', g'')   \rrbracket + (-1)^{(m-1)(n-1) } \llbracket (f', g') , \llbracket (f, g) , (f'', g'') \rrbracket \rrbracket,
\end{align*}
which verifies the graded Jacobi identity.
\end{proof}

It follows from the above Proposition that the shifted graded vector space $\bigoplus_n C^{n +1}_{\mathrm{AssDer}} (A, A)$ carries a graded Lie bracket. This result is true for an arbitrary vector space $A$ (not necessarily an associative algebra) and $C^{n}_{\mathrm{AssDer}} (A, A) = \mathrm{Hom}(A^{\otimes n}, A) \oplus \mathrm{Hom}(A^{\otimes n-1}, A)$, for all $n$.
Let $A$ be a vector space, $\mu : A^{\otimes 2} \rightarrow A$ and $\phi_A : A \rightarrow A$ be two linear maps. Consider the pair $(\mu, \phi_A) \in C^2_{\mathrm{AssDer}} (A, A)$. Then $\mu$ defines an associative product on $A$ and $\phi_A$ is a derivation for the associative product if and only if
$(\mu, \phi_A) \in C^2_{\mathrm{AssDer}} (A, A)$ is a Maurer-Cartan element  in the graded Lie algebra $( \bigoplus_n C^{n +1}_{\mathrm{AssDer}} (A, A), \llbracket ~, ~ \rrbracket )$, i.e.
%\begin{align*}
$\llbracket (\mu, \phi_A) , (\mu, \phi_A) \rrbracket = 0.$
%\end{align*}
With these notations, the differential (\ref{ass-der-diff}) of the AssDer pair $(A, \phi_A)$ with coefficients in itself is given by
\begin{align*}
\partial (f , g) = (-1)^{n-1} \llbracket (\mu, \phi_A), (f , g) \rrbracket, \text{ for }  (f , g) \in C^n_{\mathrm{AssDer}}(A,A).
\end{align*}
As a consequence, we get that the graded space of cohomology $\bigoplus_n H^{n+1}_{\mathrm{AssDer}} (A, A)$ of the AssDer pair $(A, \phi_A)$ with coefficients in itself carries a graded Lie bracket.

\medskip

\begin{proof} $(of~ Lemma~ \ref{commute-d-d})$ We first prove this result when the coefficient is the AssDer pair itself. Then using the semidirect product AssDer pair, we conclude the same for any coefficients.

When $(M, \phi_M ) = ( A, \phi_A)$, we have $\delta_{\mathrm{Hoch}} (f) = (-1)^{n-1} [ \mu, f]$ and $\delta f = - [ \phi_A, f]$, for any $f \in C^n(A, A)$. Hence
\begin{align*}
\delta_{\mathrm{Hoch}} \circ \delta (f) = -\delta_{\mathrm{Hoch}} [ \phi_A, f ] =~& (-1)^n [ \mu, [\phi_A, f]] \\
=~& (-1)^n \cancel{[[\mu, \phi_A], f]} + (-1)^n [\phi_A [\mu, f]] = \delta \circ \delta_{\mathrm{Hoch}} (f).
\end{align*}
For any coefficient $(M, \phi_M)$, we consider the semidirect product AssDer pair $(A \oplus M, \phi_A \oplus \phi_M)$ given in Proposition \ref{semi-d}. We use the same notation $\delta_\mathrm{Hoch}$ to denote the Hochschild cohomology of $A$ with coefficients in $M$, as well as the Hochschild cohomology of the semi-direct product algebra $A \oplus M$. Similarly, we use the same notation for the operator $\delta$.
Note that, for any $f \in C^n (A, M)$ can be extended to a map $\widetilde{f} \in C^n ( A \oplus M, A \oplus M)$ by
\begin{align*}
\widetilde{f} ((a_1, m_1), \ldots, ( a_n, m_n)) = (0, f(a_1, \ldots, a_n)).
\end{align*}
The map $f$ can be obtained from $\widetilde{f}$ just by restricting it to $A^{\otimes n}$. Moreover, $\widetilde{f} = 0$ implies that $f = 0$. Observe that $\widetilde{\delta_{\mathrm{Hoch}}(f)} = \delta_{\mathrm{Hoch}} (\widetilde{f})$ and $\widetilde{\delta f} = \delta \widetilde{f}$. Hence we have
\begin{align*}
\widetilde{\delta_{\mathrm{Hoch}} \circ \delta(f)} = \delta_{\mathrm{Hoch}} (\delta \widetilde{f}) = \delta_{\mathrm{Hoch}} \circ \delta (\widetilde{f}) = \delta \circ \delta_{\mathrm{Hoch}} ( \widetilde{f}) = \widetilde{\delta \circ \delta_{\mathrm{Hoch}} (f)}.
\end{align*}
This implies that $\delta_{\mathrm{Hoch}} \circ \delta = \delta \circ \delta_{\mathrm{Hoch}}.$
\end{proof}

\subsection{Relation with LieDer pairs}
Let $(A, \phi_A)$ be an AssDer pair. Consider the Lie algebra structure on $A$ with the commutator bracket $[a, b]_c = ab - ba.$ We denote this Lie algebra by $A_c$. Then it is easy to see that $\phi_A$ is a derivation for the Lie algebra $A_c$.
%\begin{align*}
%\phi_A [a,b]_c = \phi_A ( ab - ba ) =~& \phi_A (a) b + a \phi_A (b) - \phi_A(b) a - b \phi_A (a) \\
%=~& [\phi_A(a), b ]_c + [ a, \phi_A (b)]_c.
%\end{align*}
Thus we get a functor $(~)_c : {\bf AssDer} \rightarrow {\bf LieDer}$. In the following, we construct a functor left adjoint to $(~)_c$ using the universal enveloping algebra of a Lie algebra.

Let $(\mathfrak{g}, \phi_\mathfrak{g})$ be a LieDer pair. Consider the 
tensor algebra $T (\mathfrak{g})$ of $\mathfrak{g}$. The universal enveloping algebra $U(\mathfrak{g})$ is an associative algebra which is the quotient of $T(\mathfrak{g})$ by the two-sided ideal generated by the elements of the form $x \otimes y - y \otimes x - [x,y]$, for $x, y \in \mathfrak{g}.$
 Note that the linear map $\overline{\phi_\mathfrak{g}}  : {T} (\mathfrak{g}) \rightarrow {T} (\mathfrak{g})$ (see Example \ref{ass-der-exam} (iv)) induces a map $\phi_{U(\mathfrak{g})} : U(\mathfrak{g}) \rightarrow U(\mathfrak{g})$ as
\begin{align*}
\overline{\phi_\mathfrak{g}} ( x \otimes y - y \otimes x - [x,y] ) 
%&= \overline{\phi_\mathfrak{g}} (x) \otimes y + x \otimes \overline{\phi_\mathfrak{g}} (y) -  \overline{\phi_\mathfrak{g}} (y) \otimes x - y \otimes \overline{\phi_\mathfrak{g}} (x)   - [ \overline{\phi_\mathfrak{g}} (x), y ] - [ x, \overline{\phi_\mathfrak{g}} (y)] \\
= \overline{\phi_\mathfrak{g}} (x) \otimes y - y \otimes \overline{\phi_\mathfrak{g}} (x)  - [ \overline{\phi_\mathfrak{g}} (x), y ] + x \otimes \overline{\phi_\mathfrak{g}} (y) - \overline{\phi_\mathfrak{g}} (y) \otimes x - [ x, \overline{\phi_\mathfrak{g}} (y)].
\end{align*}
Moreover, $\phi_{U(\mathfrak{g})}$ is a derivation on $U(\mathfrak{g})$. Thus, a LieDer pair $(\mathfrak{g}, \phi_\mathfrak{g}) $ induces an AssDer pair $( U(\mathfrak{g}), \phi_{U(\mathfrak{g})} )$ on the universal enveloping algebra $U(\mathfrak{g}).$

\begin{prop}\label{left-adj-th}
The functor $U : \mathrm{\bf LieDer} \rightarrow \mathrm{\bf AssDer}$ is left adjoint to the functor $(~)_c : \mathrm{\bf AssDer} \rightarrow \mathrm{\bf LieDer}$. In other words, there is an isomorphism
\begin{align*}
\mathrm{Hom}_{\mathrm{\bf AssDer}} (U(\mathfrak{g}), A) \cong \mathrm{Hom}_{\mathrm{\bf LieDer}} (\mathfrak{g}, A_c),
\end{align*}
for any AssDer pair $(A, \phi_A)$ and any LieDer pair $(\mathfrak{g}, \phi_\mathfrak{g}).$
\end{prop}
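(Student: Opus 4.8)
The plan is to bootstrap from the classical adjunction between the universal enveloping algebra functor and the commutator functor for plain associative and Lie algebras, and then to check that the two pieces of derivation data are transported along the unit and counit of that adjunction. Recall the classical universal property: for an associative algebra $A$ and a Lie algebra $\mathfrak{g}$, restriction along the canonical map $\iota : \mathfrak{g} \to U(\mathfrak{g})$ is a bijection between associative algebra morphisms $U(\mathfrak{g}) \to A$ and Lie algebra morphisms $\mathfrak{g} \to A_c$, with inverse sending a Lie map $\psi$ to its unique multiplicative extension $\widetilde{\psi} : U(\mathfrak{g}) \to A$. So it suffices to show that under this bijection an associative algebra map $f : U(\mathfrak{g}) \to A$ satisfies $f \circ \phi_{U(\mathfrak{g})} = \phi_A \circ f$ if and only if the corresponding Lie map $f \circ \iota : \mathfrak{g} \to A_c$ satisfies $(f\circ\iota) \circ \phi_{\mathfrak{g}} = \phi_A \circ (f \circ \iota)$.

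One direction is immediate: if $f \circ \phi_{U(\mathfrak{g})} = \phi_A \circ f$, precompose with $\iota$ and use $\phi_{U(\mathfrak{g})} \circ \iota = \iota \circ \phi_{\mathfrak{g}}$, which holds by the construction of $\phi_{U(\mathfrak{g})}$ recalled just before the statement (on the degree-one part the induced map $\overline{\phi_\mathfrak{g}}$ restricts to $\phi_{\mathfrak{g}}$, and the quotient map $T(\mathfrak{g}) \to U(\mathfrak{g})$ is the identity there). For the converse, suppose $\psi := f \circ \iota$ satisfies $\psi \circ \phi_{\mathfrak{g}} = \phi_A \circ \psi$, and set $D_1 := f \circ \phi_{U(\mathfrak{g})}$ and $D_2 := \phi_A \circ f$, both linear maps $U(\mathfrak{g}) \to A$. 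Using that $\phi_{U(\mathfrak{g})}$ and $\phi_A$ are derivations and that $f$ is multiplicative, each $D_i$ satisfies the $f$-twisted Leibniz rule $D_i(uv) = D_i(u)\, f(v) + f(u)\, D_i(v)$ for all $u, v \in U(\mathfrak{g})$; hence so does the difference $D := D_1 - D_2$. By hypothesis $D$ vanishes on $\iota(\mathfrak{g})$, and a twisted derivation over the algebra map $f$ that vanishes on a generating set vanishes identically, by induction on word length in the generators. Since $U(\mathfrak{g})$ is generated as an algebra by $\iota(\mathfrak{g})$, we get $D = 0$, i.e. $f$ is a morphism of AssDer pairs.

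Combining the two directions, restriction along $\iota$ restricts to a bijection $\mathrm{Hom}_{\mathrm{\bf AssDer}}(U(\mathfrak{g}), A) \cong \mathrm{Hom}_{\mathrm{\bf LieDer}}(\mathfrak{g}, A_c)$, whose inverse is the multiplicative extension (which lands in $\mathrm{\bf AssDer}$ precisely by the converse direction just proved). Naturality in both the AssDer pair $(A, \phi_A)$ and the LieDer pair $(\mathfrak{g}, \phi_\mathfrak{g})$ is inherited from the naturality of the classical enveloping-algebra adjunction, because the structure maps of our adjunction (the restriction along $\iota$ and the multiplicative extension) are literally the classical ones, merely checked to preserve the extra derivation. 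The only point that requires an argument beyond the classical case is the ``twisted derivation determined by its values on algebra generators'' step in the converse direction; I expect this to be the main, and fairly mild, obstacle, with everything else being transport of the standard $U \dashv (~)_c$ adjunction.
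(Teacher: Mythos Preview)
Your proof is correct and follows essentially the same approach as the paper: both directions use restriction along $\iota$ and multiplicative extension, bootstrapping from the classical $U \dashv (~)_c$ adjunction. Your converse direction is in fact more detailed than the paper's, which simply asserts that the multiplicative extension $\widetilde{h} : T(\mathfrak{g}) \to A$ ``is indeed a morphism of AssDer pairs'' and then descends to $U(\mathfrak{g})$; your twisted-derivation argument (that $D = f\circ\phi_{U(\mathfrak{g})} - \phi_A\circ f$ satisfies an $f$-twisted Leibniz rule and hence vanishes once it vanishes on generators) is exactly the justification the paper omits.
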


\begin{proof}
For any AssDer pair morphism $f : U(\mathfrak{g}) \rightarrow A$, we consider its restriction to $\mathfrak{g}$, which is a Lie algebra morphism $\mathfrak{g} \rightarrow A_c$ and commute with derivations. Hence it is a morphism of LieDer pairs.
Conversely, for any LieDer pair morphism $h : \mathfrak{g} \rightarrow A_c$, we consider the unique extension of $h$ as an associative algebra morphism $\widetilde{h} : T(\mathfrak{g}) \rightarrow A$. This is indeed a morphism of AssDer pairs. It induces a map of AssDer pairs $\widetilde{\widetilde{h}} : U (\mathfrak{g}) \rightarrow A$ as $h$ is a LieDer pair morphism. Finally, the above two correspondences are inverses to each other.
\end{proof}

Let $(\mathfrak{g}, \phi_\mathfrak{g})$ be a LieDer pair. A module over it \cite{tang-f-sheng} consists of a $\mathfrak{g}$-module $M$ together with a linear map $\phi_M : M \rightarrow M$ satisfying
\begin{align*}
\phi_M [x,m] = [\phi_\mathfrak{g}(x), m ] + [ x, \phi_M (m)], ~~~ \text{ for all } x \in \mathfrak{g}, m \in M.
\end{align*}

\begin{prop}
Let $(\mathfrak{g}, \phi_\mathfrak{g})$ be a LieDer pair. A $(\mathfrak{g}, \phi_\mathfrak{g})$-module is equivalent to a left $(U(\mathfrak{g}), \phi_{U(\mathfrak{g})})$-module.
\end{prop}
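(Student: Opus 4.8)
The plan is to bootstrap from the classical equivalence between $\mathfrak{g}$-modules and left $U(\mathfrak{g})$-modules and then check that, under that correspondence, the two derivation-compatibility conditions match. Recall the classical dictionary: a $\mathfrak{g}$-module structure on $M$ is the same data as a left $U(\mathfrak{g})$-module structure on $M$, one direction being restriction of the $U(\mathfrak{g})$-action along the canonical inclusion $\mathfrak{g} \hookrightarrow U(\mathfrak{g})$, the other being the extension of a $\mathfrak{g}$-action to $T(\mathfrak{g})$ and then to $U(\mathfrak{g})$ (the defining relations $x\otimes y - y\otimes x - [x,y]$ act as zero). Thus only the datum $\phi_M$ needs attention, and I keep it literally unchanged in both directions.

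First I would treat the forward direction. Let $(M,\phi_M)$ be a left $(U(\mathfrak{g}),\phi_{U(\mathfrak{g})})$-module, so by (\ref{rep-1-eqn}) one has $\phi_M(u\cdot m) = \phi_{U(\mathfrak{g})}(u)\cdot m + u\cdot\phi_M(m)$ for all $u \in U(\mathfrak{g})$, $m\in M$. Restricting to $u = x \in \mathfrak{g}$, and using that $\phi_{U(\mathfrak{g})}$ restricted to $\mathfrak{g}\subset U(\mathfrak{g})$ is exactly $\phi_\mathfrak{g}$ (it is induced by $\overline{\phi_\mathfrak{g}}$, which on the degree-one part $\mathfrak{g}$ of the tensor algebra is $\phi_\mathfrak{g}$), we get
\begin{align*}
\phi_M([x,m]) = [\phi_\mathfrak{g}(x),m] + [x,\phi_M(m)], \qquad x\in\mathfrak{g},\ m\in M,
\end{align*}
where $[x,m]$ denotes $x\cdot m$. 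Hence $(M,\phi_M)$ is a $(\mathfrak{g},\phi_\mathfrak{g})$-module.

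For the converse, let $(M,\phi_M)$ be a $(\mathfrak{g},\phi_\mathfrak{g})$-module and regard $M$ as a left $U(\mathfrak{g})$-module via the classical equivalence; I must verify (\ref{rep-1-eqn}) for the AssDer pair $(U(\mathfrak{g}),\phi_{U(\mathfrak{g})})$. The identity in question is linear in $u$, so set $N \subseteq U(\mathfrak{g})$ to be the set of $u$ for which $\phi_M(u\cdot m) = \phi_{U(\mathfrak{g})}(u)\cdot m + u\cdot\phi_M(m)$ holds for all $m\in M$. Then $N$ is a linear subspace; it contains $1$ because $\phi_{U(\mathfrak{g})}(1)=0$; it contains $\mathfrak{g}$, which is precisely the $(\mathfrak{g},\phi_\mathfrak{g})$-module axiom; and it is closed under left multiplication by $\mathfrak{g}$, since for $x\in\mathfrak{g}$ and $u\in N$, applying the $\mathfrak{g}$-case to the element $u\cdot m$, then that $u\in N$, then that $\phi_{U(\mathfrak{g})}$ is a derivation of $U(\mathfrak{g})$,
\begin{align*}
\phi_M\big((xu)\cdot m\big) &= \phi_M\big(x\cdot(u\cdot m)\big) = \phi_\mathfrak{g}(x)\cdot(u\cdot m) + x\cdot\phi_M(u\cdot m) \\
&= \phi_\mathfrak{g}(x)\cdot(u\cdot m) + x\cdot\big(\phi_{U(\mathfrak{g})}(u)\cdot m + u\cdot\phi_M(m)\big) \\
&= \big(\phi_\mathfrak{g}(x)\,u + x\,\phi_{U(\mathfrak{g})}(u)\big)\cdot m + (xu)\cdot\phi_M(m) \\
&= \phi_{U(\mathfrak{g})}(xu)\cdot m + (xu)\cdot\phi_M(m),
\end{align*}
so $xu\in N$. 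Since $U(\mathfrak{g})$ is generated as an algebra by the image of $\mathfrak{g}$, these three properties force $N = U(\mathfrak{g})$. Finally, I would remark that the two assignments are mutually inverse because the underlying module correspondence already is and $\phi_M$ is transported unchanged. The only genuinely delicate point is the bookkeeping in the converse: one must not pretend that elements of $U(\mathfrak{g})$ have a canonical expression as products of generators, and the subspace $N$ (equivalently, an induction on monomial length combined with linearity) is exactly what sidesteps this; everything else is immediate from the classical dictionary together with the fact that $\phi_{U(\mathfrak{g})}$ is the derivation extending $\phi_\mathfrak{g}$.
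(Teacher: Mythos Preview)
Your proof is correct and follows the same approach as the paper: invoke the classical equivalence between $\mathfrak{g}$-modules and left $U(\mathfrak{g})$-modules, then check that the derivation-compatibility conditions match via $\phi_{U(\mathfrak{g})}|_{\mathfrak{g}} = \phi_\mathfrak{g}$. Your treatment of the converse direction is in fact more careful than the paper's, which simply asserts that the condition for $x \in \mathfrak{g}$ ``is equivalent to'' the $(\mathfrak{g},\phi_\mathfrak{g})$-module axiom without justifying extension to all of $U(\mathfrak{g})$; your subspace-$N$ argument makes that extension explicit.
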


\begin{proof}
It is known that any left $U(\mathfrak{g})$-module is equivalent to a $\mathfrak{g}$-module \cite{loday-book}. More precisely, let $M$ be a left $U(\mathfrak{g})$-module, then the $\mathfrak{g}$-module structure on $M$ is given by $[x, m] = xm$, for $x \in \mathfrak{g}$ and $m \in M$.

Next, take $(M, \phi_M)$ be a left module over the AssDer pair $(U(\mathfrak{g}), \phi_{U(\mathfrak{g})})$. Then the condition 
%\begin{align*}
$\phi_M (xm) = \phi_{U(\mathfrak{g})} (x)m + x \phi_M (m)$
%\end{align*}
is equivalent to $\phi_M [x,m] = [\phi_\mathfrak{g}(x), m] + [x, \phi_M (m)],$ for all $x \in \mathfrak{g}$ and $m \in M$.
\end{proof}

Let $(M, \phi_M)$ be a representation of the AssDer pair $(A, \phi_A)$. Then $M $ can be considered as an $A_c$-module via 
$[~,~]: A_c \times M \rightarrow M, ~ [a, m] = am-ma.$
 Further $(M, \phi_M)$ is a representation of the LieDer pair $(A_c, \phi_A)$.
%\begin{align*}
%\phi_M [a, m] = \phi_M (am -ma ) =~& \phi_A (a) m + a \phi_M (m) - \phi_M (m) a - m \phi_A (a) \\
%=~& \phi_A (a) m - m \phi_A (a) + a \phi_M (m) - \phi_M (m) a \\
%=~& [ \phi_A (a), m] + [a,  \phi_M (m)].
%\end{align*}
Before we relate the cohomology of an AssDer pair with that of the corresponding commutator LieDer pair, we recall the following standard result \cite{loday-book}.

\begin{prop}
The collection of maps $T_n : \mathrm{Hom}(A^{\otimes n }, M ) \rightarrow \mathrm{Hom} ( \wedge^n A_c , M), ~ n \geq 0, \mbox{defined by}$
%\begin{align*}
$$T_n (f) (a_1, \ldots, a_n ) = \sum_{\sigma \in S_n} (-1)^\sigma f (a_{\sigma (1)}, \ldots, a_{\sigma (n)})$$
%\end{align*}
is a morphism from the Hochschild complex of $A$ with coefficients in the $A$-bimodule $M$ to the Chevalley-Eilenberg cohomology of the commutator Lie algebra $A_c$ with coefficients in the module $M$.
\end{prop}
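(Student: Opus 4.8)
The plan is to check directly that the family $\{T_n\}$ is a cochain map, i.e.\ $\delta_{CE}\circ T_n = T_{n+1}\circ \delta_{\mathrm{Hoch}}$, where $\delta_{CE}$ is the Chevalley--Eilenberg coboundary of $A_c$ with coefficients in the module $M$ (with action $a\cdot m = am-ma$, the structure recorded just before this proposition). I would begin by writing out the two operators explicitly: the Hochschild one is (\ref{hoch-diff-for}), and
\[
\delta_{CE}(g)(a_1,\ldots,a_{n+1}) = \sum_{i=1}^{n+1}(-1)^{i+1}\, a_i\cdot g(a_1,\ldots,\widehat{a_i},\ldots,a_{n+1}) + \sum_{1\le i<j\le n+1}(-1)^{i+j}\, g\big([a_i,a_j]_c,\, a_1,\ldots,\widehat{a_i},\ldots,\widehat{a_j},\ldots,a_{n+1}\big).
\]
Well-definedness of $T_n$ is immediate, since $T_n(f)$ changes sign whenever two of its arguments are interchanged, hence factors through $\wedge^n A_c$.

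Next I would expand $T_{n+1}(\delta_{\mathrm{Hoch}}f)(a_1,\ldots,a_{n+1}) = \sum_{\sigma\in S_{n+1}}(-1)^\sigma(\delta_{\mathrm{Hoch}}f)(a_{\sigma(1)},\ldots,a_{\sigma(n+1)})$ and split the sum according to the three kinds of summands of $\delta_{\mathrm{Hoch}}$: the leftmost term $a_{\sigma(1)}f(a_{\sigma(2)},\ldots,a_{\sigma(n+1)})$, the rightmost term $(-1)^{n+1}f(a_{\sigma(1)},\ldots,a_{\sigma(n)})\,a_{\sigma(n+1)}$, and the $n$ ``inner'' terms involving a product $a_{\sigma(i)}a_{\sigma(i+1)}$. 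For the leftmost and rightmost groups, I would reorganize the $S_{n+1}$-sum by first fixing the index $k$ placed in the distinguished outer slot; summing over permutations of the remaining $n$ indices reproduces $T_n(f)(a_1,\ldots,\widehat{a_k},\ldots,a_{n+1})$ up to the sign $(-1)^{k+1}$ from transporting $a_k$ past $a_1,\ldots,a_{k-1}$, and the left- and right-multiplication contributions assemble into $(-1)^{k+1}\big(a_k\,T_n(f)(\ldots) - T_n(f)(\ldots)\,a_k\big) = (-1)^{k+1}\,a_k\cdot T_n(f)(\ldots)$, which is exactly the first sum of $\delta_{CE}(T_nf)$. For the inner group, I would pair $\sigma$ with $\sigma$ precomposed with the transposition of positions $i$ and $i+1$; these two permutations have opposite sign and their summands differ only by $a_{\sigma(i)}a_{\sigma(i+1)}$ versus $a_{\sigma(i+1)}a_{\sigma(i)}$, so in the combined sum the products collapse to commutators $[a_{\sigma(i)},a_{\sigma(i+1)}]_c$; collecting terms by which unordered pair $\{k,l\}$ lands in the merged slot and tracking signs then yields the second sum of $\delta_{CE}(T_nf)$.

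The step I expect to be the main obstacle is the sign bookkeeping in the inner-term analysis: reconciling the single Hochschild sign $(-1)^i$, together with the signs produced when a chosen index (or a chosen adjacent pair) is moved to the front of an otherwise order-preserving tuple, against the Koszul signs $(-1)^{i+1}$ and $(-1)^{i+j}$ appearing in $\delta_{CE}$. Once a consistent indexing convention is fixed, each of the three groups matches its Chevalley--Eilenberg counterpart after a routine re-indexing of the symmetric-group sum, giving $\delta_{CE}\circ T_n = T_{n+1}\circ\delta_{\mathrm{Hoch}}$; this is why the result is usually quoted without proof.
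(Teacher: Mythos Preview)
Your proposal is correct in outline: the direct verification that $\delta_{CE}\circ T_n = T_{n+1}\circ\delta_{\mathrm{Hoch}}$ by splitting the Hochschild coboundary into its boundary and inner terms, then regrouping the $S_{n+1}$-sum, is a valid and standard proof, and you have correctly identified the sign bookkeeping in the inner-term analysis as the only genuinely delicate point.

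The paper, however, argues differently. Rather than computing termwise, it observes that the collection $\{T_n\}$ carries the Gerstenhaber bracket on $\bigoplus_n \mathrm{Hom}(A^{\otimes n},A)$ to the Nijenhuis--Richardson bracket on $\bigoplus_n \mathrm{Hom}(\wedge^n A_c, A_c)$. Since the Hochschild differential is $\delta_{\mathrm{Hoch}}f = (-1)^{n-1}[\mu,f]$ and the Chevalley--Eilenberg differential is likewise bracketing with the Maurer--Cartan element $\omega = T_2(\mu)$ (the commutator bracket), the chain-map property follows formally from the fact that $T$ is a morphism of graded Lie algebras sending $\mu$ to $\omega$. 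The case with coefficients in a bimodule $M$ is then handled by the usual semi-direct product trick, encoding the bimodule as part of a larger Maurer--Cartan element. What this buys is that all the sign tracking is absorbed into a single verification (that $T$ respects the brackets), after which the compatibility with differentials is automatic; your approach is more elementary and self-contained, but requires the explicit reconciliation of the Hochschild signs $(-1)^i$ with the Chevalley--Eilenberg signs $(-1)^{i+j}$ that you flagged.
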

%This is standard and can be proved in a various way. First, it can be checked that $\{ T_n\}$ maps Gerstenhaber bracket to the Nijenhuis-Richardson bracket. Note that an associative structure on $A$ and bimodule $M$ can be described by a Maurer-Cartan element in Gerstenhaber Lie bracket while a Lie algebra $\mathfrak{g}$ and representation on $M$ can be described by a Maurer-Cartan element in the Nijenhuis-Richardson bracket \cite{das-rota}. The result follows as the differentials in both cases are induced by respective Maurer-Cartan elements and $T$ maps the Maurer-Cartan element for the associative structure to the Maurer-Cartan element for the corresponding Lie algebra structure.

In \cite{tang-f-sheng} the authors introduced a cohomology for a LieDer pair with coefficients in a representation. Let $(\mathfrak{g}, \phi_\mathfrak{g})$ be a LieDer pair and $(M, \phi_M)$ be a representation of it. We denote by $\delta_{\mathrm{CE}} : \mathrm{Hom}(\wedge^n \mathfrak{g}, M) \rightarrow \mathrm{Hom}(\wedge^{n+1} \mathfrak{g}, M)$ the coboundary operator for the Chevalley-Eilenberg cohomology of $\mathfrak{g}$ with coefficients in $M$. Define the $0$-th cochain group of the LieDer pair $(\mathfrak{g}, \phi_\mathfrak{g})$  with coefficients in  $(M, \phi_M)$ to be $0$, and the higher cochain groups are defined by $C^1_{\mathrm{LieDer}}(\mathfrak{g}, M) = \mathrm{Hom}(\mathfrak{g}, M)$ and $C^n_{\mathrm{LieDer}}(\mathfrak{g}, M) = \mathrm{Hom}(\wedge^n \mathfrak{g}, M) \oplus \mathrm{Hom} (\wedge^{n-1} \mathfrak{g}, M)$, for $n \geq 2$. The coboundary map $\partial : C^n_{\mathrm{LieDer}}(\mathfrak{g}, M) \rightarrow C^{n+1}_{\mathrm{LieDer}}(\mathfrak{g}, M)$ is given by
%\begin{align*}
$$\partial f = (\delta_{\mathrm{CE}}f, - \delta f) ~\mbox{and}~
\partial (f, g) = ( \delta_{\mathrm{CE}}f , ~  \delta_{\mathrm{CE}} g + (-1)^n \delta f),$$
%\end{align*}
where 
  $\delta : \mathrm{Hom}(\wedge^n \mathfrak{g}, M) \rightarrow \mathrm{Hom}(\wedge^n \mathfrak{g}, M)$ is the map
%\begin{align*}
$$\delta f = \sum_{i=1}^n f \circ ( \mathrm{id} \otimes \cdots \otimes \phi_\mathfrak{g} \otimes \cdots \otimes \mathrm{id}) - \phi_M \circ f$$
%\end{align*}
for $f \in C^1_{\mathrm{LieDer}}(\mathrm{g}, M)$ and $(f, g) \in C^n_{\mathrm{LieDer}}(\mathfrak{g}, M)$.
When one consider the cohomology of the LieDer pair $(\mathfrak{g}, \phi_\mathfrak{g})$ with coefficients in itself, the cochain groups $\bigoplus_n C^n_{\mathrm{LieDer}}(\mathfrak{g}, \mathfrak{g})$ carries a degree $-1$ graded Lie bracket $\llbracket ~, ~ \rrbracket $ given by
\begin{align*}
\llbracket (f, g), (f', g') \rrbracket = ( [f,f'] , (-1)^{m+1} [ f, g'] + [g, f']),
\end{align*}
for $(f, g) \in C^m_{\mathrm{LieDer}}(\mathfrak{g}, \mathfrak{g}), ~ (f', g') \in C^n_{\mathrm{LieDer}}(\mathfrak{g}, \mathfrak{g})$, where $[~, ~]$ is the Nijenhuis-Richardson bracket on the space of skew-symmetric multilinear maps on $\mathfrak{g}$ given by
\begin{align*}
[f, f' ] (x_1, \ldots,~& x_{m+n-1}) 
= \sum_{\sigma \in \mathrm{Sh}(n, m-1)} (-1)^\sigma f ( f' ( x_{\sigma (1)}, \ldots, x_{\sigma (n)}), x_{\sigma (n+1)}, \ldots, x_{\sigma (m+n-1)}) \\
~&- (-1)^{(m-1)(n-1)} \sum_{ \sigma \in \mathrm{Sh}(m, n-1)} (-1)^\sigma f' ( f ( x_{\sigma (1)}, \ldots, x_{\sigma (m)}), x_{\sigma (m+1)}, \ldots, x_{\sigma (m+n-1)}).
\end{align*}
If the Lie bracket on $\mathfrak{g}$ is given by a map $\omega : \mathfrak{g} \times \mathfrak{g} \rightarrow \mathfrak{g}$, then $(\omega, \phi_\mathfrak{g}) \in C^2_{\mathrm{LieDer}}(\mathfrak{g}, \mathfrak{g})$ satisfies $\llbracket (\omega, \phi_\mathfrak{g}), (\omega, \phi_\mathfrak{g}) \rrbracket = 0$ and the differential if given by
\begin{align*}
\partial (f, g) = (-1)^{n-1} \llbracket (\omega, \phi_\mathfrak{g}), (f, g) \rrbracket.
\end{align*}

\begin{prop}\label{comm-map}
Let $(A, \phi_A)$ be an AssDer pair and $(M, \phi_M)$ be a representation over it. Then the maps
\begin{align*}
(T_n, T_{n-1}) : \mathrm{Hom}(A^{\otimes n}, M) \oplus \mathrm{Hom}(A^{\otimes n-1}, M) \rightarrow \mathrm{Hom}(\wedge^n A_c, M) \oplus \mathrm{Hom}(\wedge^{n-1} A_c, M)
\end{align*}
defines a morphism from the cohomology of the AssDer pair $(A, \phi_A)$ to the cohomology of the corresponding LieDer pair $(A_c, \phi_A).$
\end{prop}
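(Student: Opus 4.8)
The plan is to build on the cochain-level statement proved just above, namely that the family $T=\{T_n\}$ is a morphism of complexes from the Hochschild complex $C^\bullet(A,M)$ to the Chevalley--Eilenberg complex $C^\bullet(A_c,M)$; that is, $T_{n+1}\circ\delta_{\mathrm{Hoch}}=\delta_{\mathrm{CE}}\circ T_n$ for all $n$. Comparing the AssDer coboundary $(\ref{ass-der-diff})$ with the LieDer coboundary recalled above, one sees that $T$ already accounts for the ``$\delta_{\mathrm{Hoch}}$ versus $\delta_{\mathrm{CE}}$'' part of both differentials, so the whole Proposition reduces to a single compatibility: the antisymmetrization maps must intertwine the derivation-twisting operators, i.e. $T_n\circ\delta=\delta\circ T_n$ as maps $\mathrm{Hom}(A^{\otimes n},M)\to\mathrm{Hom}(\wedge^n A_c,M)$, where $\delta$ on the source is the operator $\delta f=\sum_i f\circ(\mathrm{id}\otimes\cdots\otimes\phi_A\otimes\cdots\otimes\mathrm{id})-\phi_M\circ f$ on Hochschild cochains and $\delta$ on the target is its counterpart on Chevalley--Eilenberg cochains, with $\phi_A$ now viewed as a derivation of $A_c$. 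I would isolate and prove this as a lemma first.

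To prove $T_n\delta=\delta T_n$, I would evaluate both sides on $a_1,\dots,a_n\in A$. Because $\phi_A$ and $\phi_M$ are degree-zero linear maps, no Koszul signs enter; the term $\phi_M\circ f$ pulls out of the alternating sum defining $T_n$, so it commutes with $T_n$ on the nose. For the sum $\sum_{i=1}^n f\circ(\mathrm{id}\otimes\cdots\otimes\phi_A\otimes\cdots\otimes\mathrm{id})$ I would reindex the resulting double sum over $\sigma\in S_n$ and the slot $i$ by setting $j=\sigma(i)$: for each fixed $j$, the pairs $(\sigma,i)$ with $\sigma(i)=j$ are in sign-preserving bijection with $S_n$, and the corresponding terms sum precisely to $(T_nf)(a_1,\dots,\phi_A a_j,\dots,a_n)$. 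Summing over $j$ then yields $\delta(T_n f)(a_1,\dots,a_n)$. This is the one place in the argument that requires genuine (though elementary) bookkeeping; everything else is formal.

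With the lemma available, the Proposition follows by a componentwise check. For $f\in C^1_{\mathrm{AssDer}}(A,M)$ one computes $(T_2,T_1)(\partial f)=(T_2\delta_{\mathrm{Hoch}}f,\,-T_1\delta f)=(\delta_{\mathrm{CE}}T_1f,\,-\delta T_1 f)=\partial(T_1 f)$. For $(f_n,\overline f_n)\in C^n_{\mathrm{AssDer}}(A,M)$ with $n\ge2$, the first component of $(T_{n+1},T_n)\bigl(\partial(f_n,\overline f_n)\bigr)$ is $T_{n+1}\delta_{\mathrm{Hoch}}f_n=\delta_{\mathrm{CE}}T_n f_n$, and its second component is $T_n\bigl(\delta_{\mathrm{Hoch}}\overline f_n+(-1)^n\delta f_n\bigr)=\delta_{\mathrm{CE}}T_{n-1}\overline f_n+(-1)^n\delta T_n f_n$, which is exactly the second component of $\partial\bigl(T_n f_n,T_{n-1}\overline f_n\bigr)$. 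Hence $(T_\bullet,T_{\bullet-1})$ commutes with the coboundary maps, so it is a morphism of cochain complexes and therefore induces the asserted map on cohomology. As a final sanity check I would confirm that the operator called $\delta$ on the Chevalley--Eilenberg side is literally the antisymmetrized Hochschild $\delta$ --- immediate from the two defining formulas --- and that the $(-1)^n$ factors in the two differentials agree, which they visibly do.
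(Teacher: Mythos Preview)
Your argument is correct. The paper actually states this Proposition without proof, so there is nothing to compare against directly; your proposal supplies exactly the missing details, and does so along the most natural lines suggested by the surrounding text. The reduction to the single compatibility $T_n\circ\delta=\delta\circ T_n$ is the right move once the previous Proposition gives $T_{n+1}\circ\delta_{\mathrm{Hoch}}=\delta_{\mathrm{CE}}\circ T_n$, and your reindexing argument for the lemma (setting $j=\sigma(i)$ and noting that for fixed $\sigma$ this is a bijection on $\{1,\dots,n\}$) is clean and correct. The componentwise check at the end is accurate, including the match of the $(-1)^n$ factors in the two differentials.

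One cosmetic point: in your final sanity check you say the Chevalley--Eilenberg $\delta$ is ``the antisymmetrized Hochschild $\delta$''. More precisely, the two $\delta$'s are given by literally the same formula (insertion of $\phi_A$ in each slot minus post-composition with $\phi_M$), just evaluated on multilinear versus alternating cochains; antisymmetrization is what $T_n$ does, not what relates the two $\delta$'s. This does not affect the proof, since what you actually use---and prove---is $T_n\delta=\delta T_n$.
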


\section{Central extensions of AssDer pairs}\label{sec-ext}

%In this section, we study central extensions of an AssDer pair, i.e. extensions of an AssDer pair by a trivial AssDer pair. We show that isomorphism classes of central extensions are classified by the second cohomology of the AssDer pair with coefficients in the trivial representation.

Let $(A, \phi_A)$ be an AssDer pair and $(M, \phi_M)$ a trivial AssDer pair. That is, the associative structure on $M$ is trivial.

\begin{defn}
A central extension of $(A, \phi_A)$ by $(M, \phi_M)$ consists of an exact sequence of AssDer pairs
\begin{align}\label{c-e}
0 \rightarrow (M, \phi_M) \xrightarrow{i} (\hat{A}, \phi_{\hat{A}}) \xrightarrow{p} (A, \phi_A) \rightarrow 0
\end{align}
satisfying $i(m) \cdot \hat{a} = 0 = \hat{a} \cdot i(m)$, for all $m \in M$ and $\hat{a} \in \hat{A}$.
\end{defn}

We identify $M$ with the corresponding subalgebra of $\hat{A}$ and with this identification $\phi_M = \phi_{\hat{A}}|_M.$

\begin{defn}
Let $(\hat{A}_1, \phi_{\hat{A}_1})$ and  $(\hat{A}_2, \phi_{\hat{A}_2})$ be two central extensions of the AssDer pair $(A, \phi_A)$ by $(M, \phi_M).$ These two central extensions are said to be isomorphic if there exists an AssDer pair isomorphism $\eta : (\hat{A}_1, \phi_{\hat{A}_1}) \rightarrow (\hat{A}_2, \phi_{\hat{A}_2}) $ such that the following diagram commute
\begin{align}
\xymatrix{
 & & (\hat{A}_1, \phi_{\hat{A}_1}) \ar[dd]^{\eta}  \ar[rd]^{p_1} &  &  \\
0 \ar[r] & (M, \phi_M) \ar[rd]_{i_2} \ar[ru]^{i_1} & & (A, \phi_A) \ar[r] & 0. \\
 &  & (\hat{A}_2, \phi_{\hat{A}_2}) \ar[ru]_{p_2} &  &
}
\end{align}
\end{defn}

Let $0 \rightarrow (M, \phi_M) \xrightarrow{i} (\hat{A}, \phi_{\hat{A}}) \xrightarrow{p} (A, \phi_A) \rightarrow 0$ be a central extension of the AssDer pair $(A, \phi_A)$ by $(M, \phi_M)$. A  section of it is given by a linear map $s : A \rightarrow \hat{A}$ such that $p \circ s = \mathrm{id}_A$.

Let $s$ be a section. Define two maps $\psi : A^{\otimes 2} \rightarrow M$ and $\chi : A \rightarrow M$ by
\begin{align*}
\psi (a, b) =~ s(a) \cdot s(b) - s(ab), ~~~~ \quad \chi (a) =~ \phi_{\hat{A}} (s(a)) - s ( \phi_A (a)), ~~~ \text{ for } a, b \in A.
\end{align*}
Since the vector space $\hat{A}$ is isomorphic to $A \oplus M$ (via the section $s$), we may transfer the AssDer structure of $\hat{A}$ to $A \oplus M$. This will certainly depend on the section $s$. The product and the linear map on $A \oplus M$ are respectively given by
\begin{align*}
 (a \oplus m) \cdot (b \oplus n) =~ ab \oplus \psi (a, b) \quad \text{ and } \quad
 \phi_{A \oplus M}(a \oplus m) =~ \phi_A (a) \oplus  \phi_M (m) + \chi (a).
\end{align*}
With these notations, we have the following.

\begin{prop}\label{assder-2-co}
The pair $(A \oplus M, \phi_{A \oplus M})$ is an AssDer pair if and only if $(\psi, \chi)$ is a $2$-cocycle in the cohomology of the AssDer pair $(A, \phi_A)$ with coefficients in the trivial representation $(M= (M, l=0, r=0), \phi_M)$.
\end{prop}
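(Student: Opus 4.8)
The plan is to unwind both sides of the claimed equivalence into explicit identities on $\psi$ and $\chi$, and check they match term-by-term. Recall that a $2$-cochain of the AssDer pair is a pair $(\psi,\chi) \in C^2(A,M) \oplus C^1(A,M)$, and by the formula (\ref{ass-der-diff}) the coboundary $\partial(\psi,\chi) = (\delta_{\mathrm{Hoch}}\psi, \delta_{\mathrm{Hoch}}\chi + (-1)^2 \delta\psi) = (\delta_{\mathrm{Hoch}}\psi, \delta_{\mathrm{Hoch}}\chi + \delta\psi)$, so $(\psi,\chi)$ is a $2$-cocycle precisely when $\delta_{\mathrm{Hoch}}\psi = 0$ and $\delta_{\mathrm{Hoch}}\chi + \delta\psi = 0$. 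Since $M$ carries the trivial $A$-bimodule structure ($l=0,r=0$), both $\delta_{\mathrm{Hoch}}$ and $\delta$ simplify: for $\psi \in C^2(A,M)$, $\delta_{\mathrm{Hoch}}\psi(a,b,c) = -\psi(ab,c) + \psi(a,bc)$ and $\delta_{\mathrm{Hoch}}\chi(a,b) = -\chi(ab)$ for $\chi \in C^1(A,M)$; and $\delta\psi(a,b) = \psi(\phi_A(a),b) + \psi(a,\phi_A(b)) - \phi_M(\psi(a,b))$.

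First I would verify the associativity half. Expanding $((a\oplus m)\cdot(b\oplus n))\cdot(c\oplus p)$ and $(a\oplus m)\cdot((b\oplus n)\cdot(c\oplus p))$ using the product $(a\oplus m)\cdot(b\oplus n) = ab \oplus \psi(a,b)$ and the fact that $M$ multiplies trivially, one finds the two expressions agree in the $A$-component automatically (associativity of $A$), and in the $M$-component they read $\psi(ab,c)+\psi(a,b)$ versus $\psi(a,bc)+\psi(b,c)$ — wait, more carefully $\psi(ab,c)$ versus $\psi(a,bc)$, since the $\psi(a,b)$ and $\psi(b,c)$ terms sit in $M$ and get killed by further multiplication. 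So associativity of $(A\oplus M, \cdot)$ is equivalent to $\psi(ab,c) = \psi(a,bc)$, i.e. to $\delta_{\mathrm{Hoch}}\psi = 0$. This is the classical computation for the semi-direct/abelian extension of an associative algebra, and I would just cite or reproduce it briefly.

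Next I would check the derivation half: $\phi_{A\oplus M}$ is a derivation for the product on $A\oplus M$ iff
\[
\phi_{A\oplus M}((a\oplus m)\cdot(b\oplus n)) = \phi_{A\oplus M}(a\oplus m)\cdot(b\oplus n) + (a\oplus m)\cdot\phi_{A\oplus M}(b\oplus n).
\]
The left side is $\phi_A(ab)\oplus \phi_M(\psi(a,b)) + \chi(ab)$. The right side, using that $\chi(a)\in M$ multiplies trivially, is $\phi_A(a)b \oplus \psi(\phi_A(a),b) + a\phi_A(b)\oplus \psi(a,\phi_A(b))$; the $A$-components combine to $\phi_A(ab)$ by the derivation property of $\phi_A$ on $A$. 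Comparing $M$-components gives $\phi_M(\psi(a,b)) + \chi(ab) = \psi(\phi_A(a),b) + \psi(a,\phi_A(b))$, which rearranges to $\delta\psi(a,b) + \chi(ab) = 0$, i.e. $\delta\psi = -\delta_{\mathrm{Hoch}}\chi$ (using $\delta_{\mathrm{Hoch}}\chi(a,b) = -\chi(ab)$ for the trivial bimodule). This is exactly the second cocycle condition. Combining the two halves yields the claimed equivalence.

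The only mild obstacle is bookkeeping: one must be careful that every occurrence of an element of $M$ (namely $m, n, p, \psi(\cdot,\cdot), \chi(\cdot)$) is annihilated by subsequent multiplications because the bimodule structure is trivial, and that the sign conventions in (\ref{ass-der-diff}) and in the definitions of $\delta_{\mathrm{Hoch}}$, $\delta$ are applied consistently; with $n=2$ the sign $(-1)^n = 1$ is harmless. No deep idea is needed — it is a direct translation, entirely parallel to the classical Hochschild $2$-cocycle description of abelian extensions, with the extra derivation data $\chi$ accounting for the extra component of the AssDer cochain complex.
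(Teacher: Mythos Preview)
Your proposal is correct and follows essentially the same approach as the paper: both arguments unwind the 2-cocycle condition $\partial(\psi,\chi)=0$ into the two equations $\delta_{\mathrm{Hoch}}\psi=0$ and $\delta_{\mathrm{Hoch}}\chi+\delta\psi=0$, and then match these term-by-term against associativity of the product and the derivation property of $\phi_{A\oplus M}$ on $A\oplus M$. Your write-up is slightly more explicit about how the trivial bimodule action kills the $M$-terms in the computations, but the logic and level of difficulty are identical to the paper's proof.
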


%\begin{proof}
%Note that $(\psi, \chi)$ is a $2$-cocycle if and only if $\delta_{\mathrm{Hoch}} \psi = 0$ and $\delta_{\mathrm{Hoch}} \chi + \delta \psi = 0$, or equivalently,
%\begin{align}
%& \psi (ab, c ) - \psi (a, bc) = 0, \label{psi-1}\\
%& - \chi (ab) + \psi ( \phi_A a, b ) + \psi (a, \phi_A b ) - \phi_M ( \psi (a, b)) = 0. \label{psi-2}
%\end{align}
%First observe that $(A \oplus M, \phi_{A \oplus M})$ is an AssDer pair if and only if 
%\begin{align}
%((a \oplus m ) \cdot ( b \oplus n ) ) \cdot ( c \oplus p) =~& (a \oplus m ) \cdot (( b \oplus n )  \cdot ( c \oplus p) ), \label{psi-3}\\
%\phi_{A \oplus M} ( (a \oplus m) \cdot (b \oplus n)) =~& \phi_{A \oplus M} ( a \oplus m) \cdot ( b \oplus n ) + ( a \oplus m) \cdot \phi_{A \oplus M} ( b \oplus n). \label{psi-4}
%\end{align}
%The identity (\ref{psi-3}) is equivalent to (\ref{psi-1}) and the identity (\ref{psi-4}) is equivalent to (\ref{psi-2}).
%\end{proof}

\begin{thm}\label{central-fin-thm}
Let $(A, \phi_A)$ be an AssDer pair and $(M, \phi_M)$ be a trivial AssDer pair. Then the isomorphism classes of central extensions of $(A, \phi_A)$ by $(M, \phi_M)$ are classified by the second cohomology group $H^2_{\mathrm{AssDer}}(A , M)$ of the AssDer pair with coefficients in the trivial representation $(M = (M, l=0, r=0), \phi_M).$
\end{thm}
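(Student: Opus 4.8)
The plan is to produce an explicit bijection between the set of isomorphism classes of central extensions of $(A,\phi_A)$ by $(M,\phi_M)$ and the group $H^2_{\mathrm{AssDer}}(A,M)$, following the classical template for extensions of associative algebras while keeping the derivation data coupled to the multiplicative data throughout. First, starting from a central extension $0\to(M,\phi_M)\xrightarrow{i}(\hat A,\phi_{\hat A})\xrightarrow{p}(A,\phi_A)\to 0$, I would pick a $\mathbb{K}$-linear section $s\colon A\to\hat A$ of $p$ and form the pair $(\psi,\chi)$ as in the paragraph preceding Proposition \ref{assder-2-co}. By that Proposition the AssDer structure transported to $A\oplus M$ forces $(\psi,\chi)$ to be a $2$-cocycle in $C^2_{\mathrm{AssDer}}(A,M)$ for the trivial representation, so we obtain a class $[(\psi,\chi)]\in H^2_{\mathrm{AssDer}}(A,M)$.

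Second, I would show the class does not depend on the section. If $s'$ is another section then $s'-s=i\circ\lambda$ for a unique $\lambda\in\mathrm{Hom}(A,M)=C^1_{\mathrm{AssDer}}(A,M)$. Using centrality of $M$ (so that $\lambda(a)\cdot s(b)=s(a)\cdot\lambda(b)=\lambda(a)\cdot\lambda(b)=0$) together with $\phi_{\hat A}|_M=\phi_M$, a direct computation gives $\psi'-\psi=\delta_{\mathrm{Hoch}}\lambda$ (note $\delta_{\mathrm{Hoch}}\lambda(a,b)=-\lambda(ab)$ since the actions vanish) and $\chi'-\chi=\phi_M\circ\lambda-\lambda\circ\phi_A=-\delta\lambda$, hence $(\psi',\chi')-(\psi,\chi)=(\delta_{\mathrm{Hoch}}\lambda,-\delta\lambda)=\partial\lambda$ and the two cocycles are cohomologous. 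A parallel bookkeeping shows isomorphic central extensions give the same class: if $\eta\colon(\hat A_1,\phi_{\hat A_1})\to(\hat A_2,\phi_{\hat A_2})$ is an isomorphism of extensions and $s_1$ is a section of $p_1$, then $\eta\circ s_1$ is a section of $p_2$ whose associated cocycle coincides with that of $s_1$ (because $\eta$ restricts to $\mathrm{id}_M$ and commutes with the derivations), and by section-independence this is cohomologous to the cocycle of any section of $p_2$. Thus we get a well-defined map from isomorphism classes to $H^2_{\mathrm{AssDer}}(A,M)$.

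Third, for the inverse, given a $2$-cocycle $(\psi,\chi)$ I would equip $A\oplus M$ with the product $(a\oplus m)\cdot(b\oplus n)=ab\oplus\psi(a,b)$ and the map $\phi_{A\oplus M}(a\oplus m)=\phi_A(a)\oplus\phi_M(m)+\chi(a)$; Proposition \ref{assder-2-co} guarantees this is an AssDer pair, and the canonical maps $M\hookrightarrow A\oplus M\twoheadrightarrow A$ exhibit it as a central extension, giving surjectivity at once. For injectivity, if $(\psi,\chi)$ and $(\psi',\chi')$ differ by $\partial\lambda$ I would define $\eta\colon A\oplus M\to A\oplus M$ by $\eta(a\oplus m)=a\oplus(m+\lambda(a))$ and verify it is an isomorphism of the two central extensions. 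This is where the coupled nature of the AssDer complex is essential: $\eta$ being an algebra morphism amounts exactly to $\psi'-\psi=\delta_{\mathrm{Hoch}}\lambda$, while $\eta$ intertwining $\phi_{A\oplus M}$ with $\phi'_{A\oplus M}$ amounts exactly to $\chi'-\chi=-\delta\lambda$ — precisely the two components of $\partial\lambda$. I expect this simultaneous verification (rather than either piece separately, each of which is routine) to be the main point requiring care; the rest is unwinding definitions and invoking the computations already packaged into Proposition \ref{assder-2-co}.
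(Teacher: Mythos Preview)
Your proposal is correct and follows essentially the same approach as the paper: extract a $2$-cocycle $(\psi,\chi)$ from a section, verify section-independence via $(\psi',\chi')-(\psi,\chi)=\partial\lambda$ with $\lambda=s'-s$, check that an isomorphism $\eta$ of extensions transports sections to sections with identical cocycles (using $\eta|_M=\mathrm{id}_M$ and compatibility with the derivations), and for the converse build the extension on $A\oplus M$ from a cocycle and use $\eta(a\oplus m)=a\oplus(m+\lambda(a))$ to identify extensions coming from cohomologous cocycles. The only cosmetic difference is that you frame the final step as injectivity/surjectivity of a map, whereas the paper simply exhibits the two constructions as mutually inverse; the computations are the same.
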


\begin{proof}
First, we show that the cohomology class of the $2$-cocycle $(\psi, \chi)$ does not depend on the choice of $s$. Let $s_1$ and $s_2$ be two sections of (\ref{c-e}). Define a map $\phi : A \rightarrow M$ by $\phi (a) = s_1(a) - s_2 (a)$. Then we have
\begin{align*}
\psi_1 (a, b) = s_1(a) \cdot s_1(b) - s_1(ab)
=~& (s_2 (a) + \phi (a)) \cdot ( s_2(b) + \phi (b) ) - s_2(ab) - \phi (ab) \\
 =~& \psi_2 (a, b) - \phi (ab) \qquad ( \mathrm{as }~~~~ \phi (a),~ \phi (b) \in M),
\end{align*}
\begin{align*}
\chi_1 (a) = \phi_{\hat{A}} (s_1(a)) - s_1 (\phi_A (a)) 
=~& \phi_{\hat{A}} (s_2 (a) + \phi (a)) - s_2 (\phi_A (a)) - \phi (\phi_A(a)) \\
%=~& \phi_{\hat{A}} (s_2 (a)) - s_2 (\phi_A (a)) + \phi_M (\phi(a)) - \phi ( \phi_A (a)) \\
=~& \chi_2 (a) + \phi_M (\phi(a)) - \phi ( \phi_A (a)).
\end{align*}
This shows that $(\psi_1, \chi_1) = (\psi_2, \chi_2) + \partial \phi$. Hence $(\psi_1, \chi_1)$ and $(\psi_2, \chi_2)$ are representative of the same cohomology class.

Let $(\hat{A}_1, \phi_{\hat{A}_1})$ and  $(\hat{A}_2, \phi_{\hat{A}_2})$ be two isomorphic central extensions of $(A, \phi_A)$ by $(M, \phi_M)$, and the isomorphism is given by $\eta$. Let $s_1: A \rightarrow \hat{A}_1$ be a section of the first central extension. Then we have
%\begin{align*}
$p_2 \circ (\eta \circ s_1) = p_1 \circ s_1 = \mathrm{id}_A.$
%\end{align*}
This shows that $s_2 := \eta \circ s_1$ is a section for the second central extension. Since $\eta$ is a morphism of AssDer pairs, we have $\eta|_M = \mathrm{id}_M$. Thus, we have
%\begin{align*}
$$\psi_2 (a, b) =~ s_2(a) \cdot s_2 (b) - s_2 (ab) 
%=~& (\eta \circ s_1) (a) \cdot (\eta \circ s_1)(b) - (\eta \circ s_1) (ab) \\
=~ \eta ( s_1(a) \cdot s_1(b) - s_1 (ab))
%=~ s_1(a)  \cdot s_1(b) - s_1 (ab) 
= \psi_1 (a, b),$$
%\end{align*}
%\begin{align*}
$$\chi_2 (a) =~ \phi_{\hat{A}_2} ( s_2 (a)) - s_2 (\phi_A (a)) 
%=~& \phi_{\hat{A}_2} (\eta \circ s_1 (a)) - (\eta \circ s_1 )(\phi_A(a)) \\
=~ \eta \big(  \phi_{\hat{A}_1} ( s_1 (a)) - s_1 (\phi_A (a)) \big)
%=~ \phi_{\hat{A}_1} ( s_1 (a)) - s_1 (\phi_A (a)) 
= \chi_1 (a).$$
%\end{align*}
This shows that isomorphic central extensions give rise to same $2$-cocycle, hence, correspond to same element in $H^2_{\mathrm{AssDer}} (A,M).$

Conversely, let $(\psi_1, \chi_1)$ and $(\psi_2, \chi_2)$ be two cohomologous $2$-cocycles. Therefore, there exists a linear map $\phi : A \rightarrow M$ such that
$(\psi_1, \chi_1) - (\psi_2, \chi_2) = \partial \phi$. Consider the corresponding AssDer pairs $(A \oplus M, \phi^1_{A \oplus M}) $ and  $(A \oplus M, \phi^2_{A \oplus M}) $ given in Proposition \ref{assder-2-co}. They are isomorphic as AssDer pairs via the map $\eta: A \oplus M \rightarrow A \oplus M$ given by
%\begin{align*}
$\eta ( a \oplus m) = a \oplus  m + \phi (a).$
%\end{align*}
In fact, $\eta$ defines an isomorphism between central extensions.
\end{proof}

\subsection{Extensions of a pair of derivations}
In this subsection, we study extensions of a pair of derivations in a central extension of associative algebras. 
%The results of the present section are analogous to the results appeared for LieDer pair \cite{tang-f-sheng}.

Let 
\begin{align}\label{c-ext}
\xymatrix{
0 \ar[r] &  M \ar[r]^i & \hat{A} \ar[r]^p & A \ar[r] & 0
}
\end{align}
be a fixed central extension of associative algebras.

\begin{defn} A pair of derivations $(\phi_A, \phi_M) \in \mathrm{Der}(A) \times \mathrm{Der} (M)$ is said to be extensible if there exists a derivation $\phi_{\hat{A}} \in \mathrm{Der} (\hat{A})$ such that
\begin{align}\label{ass-der-der}
\xymatrix{
0 \ar[r] &  (M, \phi_M) \ar[r] & (\hat{A}, \phi_{\hat{A}}) \ar[r] & (A, \phi_A) \ar[r] & 0
}
\end{align}
is an exact sequence of AssDer pairs. In other words, $(\hat{A}, \phi_{\hat{A}})$ is a central extension of $(A, \phi_A)$ by $(M, \phi_M).$
\end{defn}

Let $s : A \rightarrow \hat{A}$ be a section of the central extension (\ref{c-ext}). Define a map $\psi : A^{\otimes 2} \rightarrow M$ by
\begin{align*}
\psi (a, b) := s(a) \cdot s(b) - s(ab).
\end{align*}
For any pair of derivations $(\phi_A, \phi_M) \in \mathrm{Der}(A) \times \mathrm{Der} (M)$, we define a map $\mathrm{Ob}^{\hat{A}}_{(\phi_A, \phi_M)} : A^{\otimes 2} \rightarrow M$ by
\begin{align*}
\mathrm{Ob}^{\hat{A}}_{(\phi_A, \phi_M)} (a, b) := \phi_M ( \psi (a, b)) - \psi ( \phi_A (a), b) - \psi (a, \phi_A (b)).
\end{align*}

\begin{prop}\label{obs-class-cen}
The map $\mathrm{Ob}^{\hat{A}}_{(\phi_A, \phi_M)} : A^{\otimes 2} \rightarrow M$ is a $2$-cocycle in the Hochschild cohomology of $A$ with coefficients in the trivial representation $M = (M, l=0, r= 0).$ Moreover, the cohomology class $[ \mathrm{Ob}^{\hat{A}}_{(\phi_A, \phi_M)}] \in H^2_{\mathrm{Hoch}} (A, M)$ does not depend on the choice of sections. 
\end{prop}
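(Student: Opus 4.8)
The plan is to recognise $\mathrm{Ob}^{\hat{A}}_{(\phi_A,\phi_M)}$ as, up to sign, the value of the operator $\delta$ appearing in the AssDer coboundary map (\ref{ass-der-diff}) on the section $2$-cochain $\psi$, and then to deduce both assertions from Lemma \ref{commute-d-d}. Unwinding the definition of $\delta$ for $n=2$ gives
\[
\delta\psi(a,b) = \psi(\phi_A(a),b) + \psi(a,\phi_A(b)) - \phi_M(\psi(a,b)) = -\,\mathrm{Ob}^{\hat{A}}_{(\phi_A,\phi_M)}(a,b),
\]
so $\mathrm{Ob}^{\hat{A}}_{(\phi_A,\phi_M)} = -\,\delta\psi$. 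As in the proof of Proposition \ref{assder-2-co}, expanding the associativity relation $(s(a)s(b))s(c)=s(a)(s(b)s(c))$ in $\hat{A}$ and using centrality of the extension (so that $\psi(a,b)\in M$ annihilates $\hat{A}$) yields $\psi(ab,c)=\psi(a,bc)$; since the $A$-bimodule structure on $M$ is trivial, this says precisely that $\psi$ is a Hochschild $2$-cocycle, $\delta_{\mathrm{Hoch}}\psi=0$.

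With these two inputs the cocycle assertion is immediate: by Lemma \ref{commute-d-d},
\[
\delta_{\mathrm{Hoch}}\,\mathrm{Ob}^{\hat{A}}_{(\phi_A,\phi_M)} = -\,\delta_{\mathrm{Hoch}}\,\delta\psi = -\,\delta\,\delta_{\mathrm{Hoch}}\psi = 0 .
\]
If one prefers to avoid invoking Lemma \ref{commute-d-d}, the same fact can be checked directly: with trivial coefficients $\delta_{\mathrm{Hoch}}\mathrm{Ob}=0$ reduces to $\mathrm{Ob}(ab,c)=\mathrm{Ob}(a,bc)$, and this follows from bilinearity of $\psi$, the Leibniz rule for $\phi_A$, and three applications of the identity $\psi(xy,z)=\psi(x,yz)$ (the $\phi_M$-terms cancel because $\psi(ab,c)=\psi(a,bc)$).

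For independence of the choice of section I would take two sections $s_1,s_2$ and set $\lambda := s_1 - s_2$, which takes values in $M=\ker p$. Since centrality kills every mixed product, expanding $\psi_j(a,b)=s_j(a)s_j(b)-s_j(ab)$ gives $\psi_1-\psi_2 = -\,\lambda\circ\mu = \delta_{\mathrm{Hoch}}\lambda$, a Hochschild coboundary. Hence, again by Lemma \ref{commute-d-d},
\[
\mathrm{Ob}^{\hat{A}}_{1} - \mathrm{Ob}^{\hat{A}}_{2} = -\,\delta(\psi_1-\psi_2) = -\,\delta\,\delta_{\mathrm{Hoch}}\lambda = -\,\delta_{\mathrm{Hoch}}(\delta\lambda),
\]
so the two obstruction cocycles represent the same class in $H^2_{\mathrm{Hoch}}(A,M)$.

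There is no substantial obstacle here. The only points that require care are the sign bookkeeping in the identification $\mathrm{Ob}^{\hat{A}}_{(\phi_A,\phi_M)}=-\delta\psi$, and the observation -- used twice and indispensable -- that it is precisely the centrality hypothesis that makes $\psi$ an honest Hochschild $2$-cocycle and $\psi_1-\psi_2$ an honest coboundary; without it both computations break down.
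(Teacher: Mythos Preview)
Your proof is correct and takes a genuinely different, more structural route than the paper. The paper proves both assertions by direct expansion: it writes out $\delta_{\mathrm{Hoch}}\mathrm{Ob}^{\hat{A}}_{(\phi_A,\phi_M)}(a,b,c)$ term by term, cancels the $\phi_M$-terms using $\psi(ab,c)=\psi(a,bc)$, expands $\phi_A(ab)$ and $\phi_A(bc)$ via the Leibniz rule, and matches the six surviving terms pairwise using the same cocycle identity; for independence it similarly expands ${}^1\mathrm{Ob}-{}^2\mathrm{Ob}$ by hand and identifies the result as $\delta_{\mathrm{Hoch}}(\phi_M\circ\phi-\phi\circ\phi_A)$. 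You instead recognise once and for all that $\mathrm{Ob}^{\hat{A}}_{(\phi_A,\phi_M)}=-\delta\psi$ and then let Lemma~\ref{commute-d-d} do all the work, reducing both claims to the single facts $\delta_{\mathrm{Hoch}}\psi=0$ and $\psi_1-\psi_2=\delta_{\mathrm{Hoch}}\lambda$. Your approach is shorter and makes transparent why the result holds (it is a formal consequence of $\psi$ being a Hochschild cocycle together with the commutation of $\delta$ and $\delta_{\mathrm{Hoch}}$), while the paper's hands-on computation is self-contained and avoids invoking the lemma. Note that your final expression $-\delta_{\mathrm{Hoch}}(\delta\lambda)$ agrees with the paper's $\delta_{\mathrm{Hoch}}(\phi_M\circ\lambda-\lambda\circ\phi_A)$, since $\delta\lambda=\lambda\circ\phi_A-\phi_M\circ\lambda$.
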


\begin{proof}
Note that $\psi$ is a $2$-cocycle on $A$ with coefficients in the trivial $A$-bimodule $M$, i.e.
\begin{align}\label{eqnn-r}
\psi (ab, c) - \psi (a, bc) = 0.
\end{align}
Observe that
\begin{align*}
~&(\delta_{\mathrm{Hoch}} \mathrm{Ob}^{\hat{A}}_{(\phi_A, \phi_M)} ) (a, b, c) 
=~ - \mathrm{Ob}^{\hat{A}}_{(\phi_A, \phi_M)} (ab, c) + \mathrm{Ob}^{\hat{A}}_{(\phi_A, \phi_M)} (a, bc) \\
&= - \cancel{\phi_M ( \psi (ab, c))} + \psi ( \phi_A (ab), c) + \psi ( ab, \phi_A (c) ) + \cancel{\phi_M ( \psi (a, bc))} - \psi ( \phi_A (a), bc) - \phi ( a, \phi_A (bc))\\
&= \psi ( a \phi_A (b), c) + \psi ( \phi_A (a) b, c) + \psi (ab, \phi_A (c)) - \psi ( \phi_A(a), bc) - \psi ( a, b \phi_A(c)) - \psi ( a, \phi_A(b) c)\\
&= 0 \quad (\mathrm{by } (\ref{eqnn-r})).
\end{align*}
This proves the first part. To prove the second part, take $s_1, s_2$ to be two sections of (\ref{c-ext}). Define a map $\phi : A \rightarrow M$ by $\phi  = s_1 - s_2$. Then we get
\begin{align*}
\psi_1 (a, b) = s_1 (a) \cdot s_1(b) - s_1 (ab)
=~& (s_2 (a) + \phi (a)) \cdot (s_2 (b) + \phi(b)) - s_2(ab) - \phi (ab) \\
=~& s_2 (a) \cdot s_2 (b) - s_2 (ab) - \phi (ab) = \psi_2 (a, b) - \phi (ab).
\end{align*}
If the $2$-cocycles corresponding to $s_1$ and $s_2$ are respectively denoted by ${}^{1}\mathrm{Ob}^{\hat{A}}_{(\phi_A, \phi_M)}$ and ${}^{2}\mathrm{Ob}^{\hat{A}}_{(\phi_A, \phi_M)}$, then
\begin{align*}
&{}^{1}\mathrm{Ob}^{\hat{A}}_{(\phi_A, \phi_M)} (a, b) =~ \phi_M ( \psi_1 (a, b)) - \psi_1 ( \phi_A (a), b) - \psi_1 ( a, \phi_A (b)) \\
&= \phi_M ( \psi_2 (a, b)) - \phi_M ( \phi(ab)) - \psi_2 (\phi_A (a), b) + \phi ( \phi_A(a)b) - \psi_2 ( a, \phi_A (b)) + \phi( a \phi_A(b)) \\
&= {}^{2}\mathrm{Ob}^{\hat{A}}_{(\phi_A, \phi_M)} (a, b) + \delta_{\mathrm{Hoch}} ( \phi_M \circ \phi - \phi \circ \phi_A)(a, b).
\end{align*}
This shows that the $2$-cocycles ${}^{1}\mathrm{Ob}^{\hat{A}}_{(\phi_A, \phi_M)}$ and ${}^{2}\mathrm{Ob}^{\hat{A}}_{(\phi_A, \phi_M)}$ are cohomologous, hence, they correspond to same cohomology class in $H^2_{\mathrm{Hoch}}(A, M).$
\end{proof}

The cohomology class  $[ \mathrm{Ob}^{\hat{A}}_{(\phi_A, \phi_M)}] \in H^2_{\mathrm{Hoch}}(A, M)$ considered above is called the {\em obstruction class} to extend the pair of derivations $(\phi_A, \phi_M).$

\begin{thm}\label{thm-obs-tri}
A pair of derivations $(\phi_A, \phi_M) \in \mathrm{Der} (A) \times \mathrm{Der} (M)$ is extensible if and only if the obstruction class $[ \mathrm{Ob}^{\hat{A}}_{(\phi_A, \phi_M)}] \in H^2_{\mathrm{Hoch}} (A, M)$ is trivial.
\end{thm}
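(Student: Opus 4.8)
The plan is to prove both directions of the equivalence by relating the obstruction cocycle $\mathrm{Ob}^{\hat A}_{(\phi_A,\phi_M)}$ to the failure of a candidate derivation on $\hat A$ to be a genuine derivation. Throughout we fix a section $s$ of the central extension \eqref{c-ext} and use the induced isomorphism $\hat A \cong A \oplus M$, under which the product is $(a,m)\cdot(b,n) = (ab,\, an+mb+\psi(a,b))$; here $an$ and $mb$ denote the (possibly nonzero) $A$-bimodule actions on $M$ coming from $\hat A$, but for the central extension these actions vanish, so the product reduces to $(a,m)\cdot(b,n)=(ab,\psi(a,b))$.

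For the ``only if'' direction, suppose $\phi_{\hat A}\in\mathrm{Der}(\hat A)$ extends the pair, i.e.\ it restricts to $\phi_M$ on $M$ and descends to $\phi_A$ on $A$. Writing $\phi_{\hat A}(s(a)) = s(\phi_A(a)) + \chi(a)$ for a linear map $\chi:A\to M$ (this is forced since $p\circ\phi_{\hat A} = \phi_A\circ p$), I would expand the derivation identity $\phi_{\hat A}(s(a)\cdot s(b)) = \phi_{\hat A}(s(a))\cdot s(b) + s(a)\cdot\phi_{\hat A}(s(b))$ using the formula for $\psi$. The left side produces $\phi_M(\psi(a,b)) + \chi(ab) + s(\phi_A(ab))$ after using $s(a)\cdot s(b) = s(ab) + \psi(a,b)$ and centrality (so $\phi_{\hat A}$ applied to the $M$-part is just $\phi_M$); the right side produces $s(\phi_A(a)b + a\phi_A(b)) + \psi(\phi_A(a),b) + \psi(a,\phi_A(b)) + \chi(a)b + a\chi(b)$, and the last two terms vanish by centrality. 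Comparing the $M$-components (the $A$-components agree automatically by the Leibniz rule in $A$), one gets $\mathrm{Ob}^{\hat A}_{(\phi_A,\phi_M)} = \phi_M\circ\psi - \psi\circ(\phi_A\otimes\mathrm{id}) - \psi\circ(\mathrm{id}\otimes\phi_A) = -\delta_{\mathrm{Hoch}}\chi$, exhibiting the obstruction cocycle as a Hochschild coboundary, hence $[\mathrm{Ob}^{\hat A}_{(\phi_A,\phi_M)}] = 0$.

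For the ``if'' direction, suppose $[\mathrm{Ob}^{\hat A}_{(\phi_A,\phi_M)}] = 0$, so there is $\chi:A\to M$ with $\delta_{\mathrm{Hoch}}\chi = -\mathrm{Ob}^{\hat A}_{(\phi_A,\phi_M)}$, which under the sign conventions of \eqref{hoch-diff-for} reads $-\chi(ab) = -\phi_M(\psi(a,b)) + \psi(\phi_A(a),b) + \psi(a,\phi_A(b))$ after recalling that the trivial bimodule kills the outer terms of $\delta_{\mathrm{Hoch}}$. Define $\phi_{\hat A}:A\oplus M\to A\oplus M$ by $\phi_{\hat A}(a,m) = (\phi_A(a),\, \phi_M(m) + \chi(a))$. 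I would then verify directly that $\phi_{\hat A}$ is a derivation for the product $(a,m)\cdot(b,n) = (ab,\psi(a,b))$: the $A$-component of the Leibniz identity is exactly the Leibniz rule for $\phi_A$ in $A$, and the $M$-component reduces precisely to the cocycle relation just written, so it holds. Finally $\phi_{\hat A}$ clearly restricts to $\phi_M$ on $M = 0\oplus M$ and satisfies $p\circ\phi_{\hat A} = \phi_A\circ p$, so \eqref{ass-der-der} is an exact sequence of AssDer pairs and the pair is extensible.

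The only real subtlety — and the step I would be most careful about — is bookkeeping the sign conventions: the Hochschild differential \eqref{hoch-diff-for} on the trivial bimodule, the definition of $\mathrm{Ob}^{\hat A}_{(\phi_A,\phi_M)}$ as written with $\phi_M\circ\psi$ minus the two $\psi\circ(\cdots)$ terms, and the $\delta$ appearing in the AssDer coboundary \eqref{ass-der-diff}; one must confirm that ``$\mathrm{Ob}$ is a Hochschild coboundary'' is genuinely the obstruction and that no stray sign flips the direction of the equivalence. Everything else is a routine unravelling of the semidirect-product-type formulas, and the computation in Proposition \ref{obs-class-cen} that $\mathrm{Ob}^{\hat A}_{(\phi_A,\phi_M)}$ is already a Hochschild $2$-cocycle guarantees that asking for it to be a coboundary is the right cohomological condition.
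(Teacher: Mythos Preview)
Your proposal is correct and follows essentially the same argument as the paper: you introduce the map $\chi(a)=\phi_{\hat A}(s(a))-s(\phi_A(a))$ (the paper calls it $\lambda$), expand the Leibniz identity for $\phi_{\hat A}$ on $s(a)\cdot s(b)$ to identify $\mathrm{Ob}^{\hat A}_{(\phi_A,\phi_M)}$ as the Hochschild coboundary of $\chi$, and run the computation backwards for the converse. The only slip is a sign --- the computation actually gives $\mathrm{Ob}^{\hat A}_{(\phi_A,\phi_M)} = \delta_{\mathrm{Hoch}}\chi$ (since $\delta_{\mathrm{Hoch}}\chi(a,b)=-\chi(ab)$ on the trivial bimodule), not $-\delta_{\mathrm{Hoch}}\chi$ --- but you already flagged the sign bookkeeping, and it is irrelevant to the conclusion since $\mathrm{Ob}$ is a coboundary iff $-\mathrm{Ob}$ is.
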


\begin{proof}
Suppose that the pair $(\phi_A, \phi_M)$ is extensible. That is, there exists a derivation $\phi_{\hat{A}} \in \mathrm{Der}( \hat{A})$ such that (\ref{ass-der-der}) is an exact sequence of AssDer pairs. Define a map $\lambda : A \rightarrow M$ by 
%\begin{align*}
$\lambda (a) = \phi_{\hat{A}} (s(a)) - s ( \phi_A (a)).$
%\end{align*}
Note that the image of $\lambda$ lies in $M$ as $p \big( \phi_{\hat{A}} (s(a)) - s ( \phi_A (a)) \big) = 0,$ which implies that $\phi_{\hat{A}} (s(a)) - s ( \phi_A (a)) \in \mathrm{ker}(p) = \mathrm{im }(i).$ 

For any $s(a) + m \in \hat{A}$, we observe that
\begin{align*}
\phi_{\hat{A}} ( s(a) + m ) = \phi_{\hat{A}} (s(a)) + \phi_M (m) 
=~& \phi_{\hat{A}} (s(a)) - s (\phi_A (a) ) + s ( \phi_A (a)) + \phi_M (m) \\
=~& s (\phi_A (a) ) + \lambda (a) + \phi_M (m).
\end{align*}
Hence, for any $s(a) +m, ~ s(b) + n \in \hat{A}$, we have
\begin{align}
\phi_{\hat{A}} ((s(a) + m) \cdot ( s(b) + n)) = \phi_{\hat{A}} ( s(a) \cdot s(b) ) \nonumber 
%=~& \phi_{\hat{A}} ( s(ab) + s(a) \cdot s(b) - s( ab)) \nonumber \\
=~&  \phi_{\hat{A}}  ( s (ab) + \psi (a, b)) \nonumber \\
=~& s ( \phi_A (ab) ) + \lambda (ab) + \phi_M ( \psi (a, b)). \label{u}
\end{align}
On the other hand,
\begin{align}
~&\phi_{\hat{A}} ( s(a) + m ) \cdot ( s(b) + n) + ( s(a) + m) \cdot \phi_{\hat{A}} ( s(b) + n) \nonumber \\
~&= (s (\phi_A (a)) + \lambda (a) + \phi_M (m) )  \cdot ( s(b) + n) + ( s(a) + m) \cdot ( s (\phi_A (b)) + \lambda (b) + \phi_M (n)   ) \nonumber \\
~&= s (\phi_A (a)) \cdot s(b) + s( a) \cdot s (\phi_A (b)) \nonumber \\
~&= s ( \phi_A (a) \cdot b ) + s ( \phi_A (a)) \cdot s(b)  - s ( \phi_A (a) \cdot b ) + s ( a \cdot \phi_A (b) ) + s(a) \cdot s( \phi_A (b)) - s( a \cdot \phi_A (b)) \nonumber \\
~&= s ( \phi_A (a) \cdot b ) + \psi ( \phi_A (a), b ) + s ( a \cdot \phi_A (b) ) +  \psi ( a, \phi_A (b)). \label{v}
\end{align}
Since $\phi_{\hat{A}}$ is a derivation, it follows from (\ref{u}) and (\ref{v}) that
\begin{align}\label{w}
\phi_M ( \psi (a, b)) - \psi ( \phi_A (a), b ) - \psi ( a, \phi_A (b) ) = - \lambda (ab ).
\end{align}
This implies that $\mathrm{Ob}^{\hat{A}}_{(\phi_A, \phi_M)} = \partial \lambda $ is given by a coboundary. Hence $[\mathrm{Ob}^{\hat{A}}_{(\phi_A, \phi_M)}]$ is trivial.

Conversely, suppose that the obstruction cocycle is given by a coboundary, say $\mathrm{Ob}^{\hat{A}}_{(\phi_A, \phi_M)} = \partial \lambda $, for some $\lambda: A \rightarrow M$. We define a map $\phi_{\hat{A}}$ on $\hat{A}$ by
\begin{align*}
\phi_{\hat{A}} ( s(a) + m) = s ( \phi_A (a)) + \lambda (a) + \phi_M (m).
\end{align*}
Using (\ref{w}), we can show that (\ref{ass-der-der}) is an exact sequence of AssDer pairs. Hence $(\phi_A, \phi_M)$ is extensible.
\end{proof}

As a consequence, we get the following.

\begin{corollary}
If $H^2_{\mathrm{Hoch}} (A, M) = 0$ then any pair of derivations $(\phi_A, \phi_M) \in \mathrm{Der} (A) \times \mathrm{Der} (M)$ is extensible.
\end{corollary}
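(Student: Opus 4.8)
The plan is to read this off immediately from Theorem~\ref{thm-obs-tri}. Fix an arbitrary pair of derivations $(\phi_A, \phi_M) \in \mathrm{Der}(A) \times \mathrm{Der}(M)$ and choose any linear section $s : A \to \hat{A}$ of the central extension~(\ref{c-ext}). Proposition~\ref{obs-class-cen} attaches to this data the $2$-cochain $\mathrm{Ob}^{\hat{A}}_{(\phi_A, \phi_M)}$, shows that it is a Hochschild $2$-cocycle on $A$ with values in the trivial bimodule $M = (M, l=0, r=0)$, and shows that its cohomology class $[\mathrm{Ob}^{\hat{A}}_{(\phi_A, \phi_M)}] \in H^2_{\mathrm{Hoch}}(A, M)$ does not depend on the choice of $s$.

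By hypothesis $H^2_{\mathrm{Hoch}}(A, M) = 0$, so this obstruction class is automatically zero; that is, $\mathrm{Ob}^{\hat{A}}_{(\phi_A, \phi_M)} = \partial \lambda$ is a coboundary for some linear map $\lambda : A \to M$. Theorem~\ref{thm-obs-tri} then applies verbatim and yields that $(\phi_A, \phi_M)$ is extensible; concretely, the extending derivation is given by $\phi_{\hat{A}}(s(a) + m) = s(\phi_A(a)) + \lambda(a) + \phi_M(m)$, which makes~(\ref{ass-der-der}) an exact sequence of AssDer pairs.

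Since the argument only invokes the tautology that every element of the zero group vanishes, there is no genuine obstacle here: all of the substantive work has already been carried out in Proposition~\ref{obs-class-cen} and Theorem~\ref{thm-obs-tri}. The one point worth recording is the quantification — the vanishing of the single group $H^2_{\mathrm{Hoch}}(A, M)$ forces every obstruction class $[\mathrm{Ob}^{\hat{A}}_{(\phi_A, \phi_M)}]$ to be trivial simultaneously, which is precisely why the conclusion can be asserted for all pairs $(\phi_A, \phi_M)$ at once.
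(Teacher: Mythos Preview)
Your proof is correct and matches the paper's approach exactly: the paper simply states ``As a consequence, we have the following'' and records the corollary without further argument, since it is an immediate application of Theorem~\ref{thm-obs-tri}. Your write-up merely spells out this immediate deduction in slightly more detail.
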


\vspace*{0.2cm}

Let $A$ be an associative algebra and $M = (M, l=0, r=0)$ be a trivial bimodule. In the following, we give conditions on a pair of derivations $(\phi_A, \phi_M) \in \mathrm{Der} (A) \times \mathrm{Der} (M)$ such that it is extensible in every central extension of associative algebras.

Define a map $\Theta : \mathrm{Der}(A) \times \mathrm{Der} (M) \rightarrow \mathfrak{gl} (H^2_{\mathrm{Hoch}} (A, M))$ by
\begin{align*}
\Theta (\phi_A, \phi_M)([\psi]) := [ \phi_M \circ \psi - \psi \circ (\phi_A \otimes \mathrm{id} ) - \psi \circ (\mathrm{id} \otimes \phi_A)].
\end{align*}

\begin{thm}
A pair of derivations $(\phi_A, \phi_M) \in \mathrm{Der} (A) \times \mathrm{Der} (M)$ is extensible in every central extensions of $A$ by $M$ if and only if $\Theta (\phi_A, \phi_M) = 0$.
\end{thm}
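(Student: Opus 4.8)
The plan is to unpack what the map $\Theta$ really does and relate it directly to the obstruction class of Theorem \ref{thm-obs-tri}. Observe first that, given any central extension $0 \to M \xrightarrow{i} \hat{A} \xrightarrow{p} A \to 0$ with associated $2$-cocycle $\psi$ (coming from a section $s$), the definition of $\mathrm{Ob}^{\hat{A}}_{(\phi_A, \phi_M)}$ is exactly $\phi_M \circ \psi - \psi \circ (\phi_A \otimes \mathrm{id}) - \psi \circ (\mathrm{id} \otimes \phi_A)$, so that $[\mathrm{Ob}^{\hat{A}}_{(\phi_A, \phi_M)}] = \Theta(\phi_A, \phi_M)([\psi])$ in $H^2_{\mathrm{Hoch}}(A, M)$. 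Thus the theorem amounts to showing that $(\phi_A, \phi_M)$ is extensible in the central extension determined by $[\psi]$ if and only if $\Theta(\phi_A,\phi_M)$ kills $[\psi]$, and then quantifying over all extensions.

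First I would recall the standard classification: isomorphism classes of central extensions of $A$ by the trivial bimodule $M$ are in bijection with $H^2_{\mathrm{Hoch}}(A, M)$, the class $[\psi]$ of the cocycle above being a complete invariant. Combined with Theorem \ref{thm-obs-tri}, which says $(\phi_A, \phi_M)$ is extensible in a given extension if and only if $[\mathrm{Ob}^{\hat{A}}_{(\phi_A,\phi_M)}] = 0$, and the identification $[\mathrm{Ob}^{\hat{A}}_{(\phi_A,\phi_M)}] = \Theta(\phi_A,\phi_M)([\psi])$, we get: $(\phi_A,\phi_M)$ is extensible in the extension with class $[\psi]$ iff $\Theta(\phi_A,\phi_M)([\psi]) = 0$. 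Now, for the forward direction, if $\Theta(\phi_A, \phi_M) = 0$ as an element of $\mathfrak{gl}(H^2_{\mathrm{Hoch}}(A,M))$, then $\Theta(\phi_A,\phi_M)([\psi]) = 0$ for every class $[\psi]$, hence $(\phi_A,\phi_M)$ is extensible in every central extension. For the converse, if $(\phi_A,\phi_M)$ is extensible in every central extension, then for every class $c \in H^2_{\mathrm{Hoch}}(A,M)$ we realize $c = [\psi]$ by the extension it classifies, deduce $\Theta(\phi_A,\phi_M)(c) = 0$, and since $c$ was arbitrary conclude $\Theta(\phi_A,\phi_M) = 0$.

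The one point that genuinely needs care — the main obstacle — is verifying that $\Theta$ is well-defined, i.e. that the formula $\phi_M \circ \psi - \psi \circ (\phi_A \otimes \mathrm{id}) - \psi \circ (\mathrm{id} \otimes \phi_A)$ sends Hochschild $2$-cocycles to Hochschild $2$-cocycles and sends coboundaries to coboundaries, so that it descends to a linear endomorphism of $H^2_{\mathrm{Hoch}}(A,M)$. The cocycle property is essentially the computation already performed in the proof of Proposition \ref{obs-class-cen} (the vanishing $\delta_{\mathrm{Hoch}} \mathrm{Ob}^{\hat A}_{(\phi_A,\phi_M)} = 0$), and the coboundary property is the computation that if $\psi = \delta_{\mathrm{Hoch}}\lambda$ then $\phi_M\circ\psi - \psi\circ(\phi_A\otimes\mathrm{id}) - \psi\circ(\mathrm{id}\otimes\phi_A) = \delta_{\mathrm{Hoch}}(\phi_M\circ\lambda - \lambda\circ\phi_A)$, which is a direct check using the Leibniz rule for $\phi_A$ on $M$ (recall $M$ is a trivial bimodule, so $\delta_{\mathrm{Hoch}}\lambda(a,b) = -\lambda(ab)$). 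Once well-definedness is in hand, the rest of the argument is the bookkeeping above, and the theorem follows.
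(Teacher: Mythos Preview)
Your proposal is correct and follows essentially the same approach as the paper: identify $[\mathrm{Ob}^{\hat{A}}_{(\phi_A,\phi_M)}] = \Theta(\phi_A,\phi_M)([\psi])$, then invoke Theorem~\ref{thm-obs-tri} in both directions, using the classification of central extensions by $H^2_{\mathrm{Hoch}}(A,M)$ for the converse. Your added verification that $\Theta$ is well-defined on cohomology is a point the paper simply assumes; one small slip in wording --- it is the Leibniz rule for $\phi_A$ on $A$, not on $M$, that gives $\lambda(\phi_A(a)b)+\lambda(a\phi_A(b))=\lambda(\phi_A(ab))$ --- but the computation itself is correct.
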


\begin{proof}
 Let $0 \rightarrow M \xrightarrow{ i } \hat{A} \xrightarrow{p} A \rightarrow 0$ be any central extension of $A$ by $M$. For any section $s : A \rightarrow \hat{A}$, the map $\psi : A^{\otimes 2} \rightarrow M$, $\psi (a, b ) = s(a ) \cdot s(b) - s ( ab)$ is a $2$-cocycle in the cohomology of $A$ with coefficients in $M$. If $\Theta (\phi_A, \phi_M)  = 0$  then we have
 \begin{align*}
 [ \mathrm{Ob}^{\hat{A}}_{(\phi_A, \phi_M)} ] = [ \phi_M \circ \psi - \psi \circ ( \phi_A \otimes \mathrm{id}) - \psi \circ ( \mathrm{id} \otimes \phi_A) ] = \Theta ( \phi_A, \phi_M) ([\psi ]) = 0.
 \end{align*}
 Hence by Theorem \ref{thm-obs-tri}, the pair $(\phi_A, \phi_M)$ is extensible.
 
 Conversely, suppose that $(\phi_A, \phi_M)$ is extensible in every central extensions of $A$ by $M$. Take any class $[\psi] \in H^2_{\mathrm{Hoch}} (A, M)$. This induces a central extension of $A$ by $M$:
 \begin{align}\label{eqn-m}
 0 \rightarrow M \xrightarrow{i} A \oplus M \xrightarrow{p} A \rightarrow 0,
 \end{align}
 where the associative product on $A \oplus M$ is given by
% \begin{align*}
$ ( a \oplus m ) \cdot ( b \oplus n) = ab \oplus \psi (a, b).$
% \end{align*}
 Since $( \phi_A, \phi_M)$ is extensible in the central extension (\ref{eqn-m}), by Theorem \ref{thm-obs-tri} we have
 \begin{align*}
 \Theta ( \phi_A, \phi_M) ( [ \psi ]) = [ \phi_M \circ \psi - \psi \circ ( \phi_A \otimes \mathrm{id}) - \psi \circ ( \mathrm{id} \otimes \phi_A) ] = [ \mathrm{Ob}^{A \oplus M}_{(\phi_A, \phi_M)} ] = 0.
 \end{align*}
 This shows that $\Theta ( \phi_A, \phi_M) = 0$.
\end{proof}

\section{Deformations}\label{sec-def}
%In this section, we study formal deformations of an AssDer pair by considering deformations of both the algebra multiplication and the derivation. Our main results are similar to the classical cases \cite{gers,nij-ric}.

Let $(A, \phi_A)$ be an AssDer pair. We denote the associative multiplication on $A$ by $\mu$. Consider the space $A[[t]]$ of formal power series in $t$ with coefficients from $A$. Then $A[[t]]$ is a $\mathbb{K}[[t]]$-module.
% and when $A$ is finite dimensional, we have $A[[t]] \cong A \otimes_\mathbb{K} \mathbb{K}[[t]].$

A formal (one-parameter) deformation of the AssDer pair $(A, \phi_A)$ consists of two formal power series
\begin{align*}
\mu_t =~& \sum_{i \geq 0} t^i \mu_i, ~~~~~ \mu_i \in \mathrm{Hom} (A^{\otimes 2}, A) \text{ with } \mu_0 = \mu,\\
\phi_t =~& \sum_{i \geq 0} t^i \phi_i, ~~~~~ \phi_i \in \mathrm{Hom} (A, A) ~~ \text{ with } \phi_0 = \phi_A
\end{align*}
such that the $\mathbb{K}[[t]]$-module $A[[t]]$ together with the multiplication $\mu_t$ forms an associative algebra and $\phi_t : A[[t]] \rightarrow A[[t]]$ is a derivation on it. In other words, $A[[t]]$ with the associative multiplication $\mu_t$ and the derivation $\phi_t$ forms an AssDer pair over $\mathbb{K}[[t]]$.
It is clear from the definition that $\mu_t  = \sum_{i \geq 0} t^i \mu_i$ defines a deformation of the associative structure on $A$ in the sense of Gerstenhaber \cite{gers}.

Let $(\mu_t, \phi_t)$ defines a deformation of the AssDer pair $(A, \phi_A)$. Then we have
\begin{align*}
\mu_t ( \mu_t (a, b), c) =~  \mu_t (a, \mu_t (b, c) )  \quad \text{ and } \quad 
\phi_t ( \mu_t (a, b)) =~ \mu_t ( \phi_t (a), b) + \mu_t ( a, \phi_t (b)),
\end{align*}
for all $a, b, c \in A$.
Expanding both the equations as power series in $t$ and equating coefficients of $t^n$ in both the equations, we get for $n \geq 0$,
\begin{align}
\sum_{i+j = n} \mu_i (\mu_j (a, b), c) =~& \sum_{i+j = n} \mu_i ( a, \mu_j (b, c)), \label{def-eq-1}\\
\sum_{i+j = n} \phi_i (\mu_j (a, b)) =~& \sum_{i+j = n} \mu_i (\phi_j (a), b) + \mu_i (a, \phi_j (b)). \label{def-eqn-2}
\end{align}
For $n = 0$, the identity (\ref{def-eq-1}) and (\ref{def-eqn-2}) both holds automatically. However, for $n =1$, we obtain
\begin{align}
\mu_1 (ab, c) + \mu_1 (a, b) c =~& a \mu_1 (b, c) + \mu_1 (a, bc) , \label{def-n-1}\\
\phi (\mu_1 (a, b)) + \phi_1 (ab) =~&  \phi_1 (a) b + \mu_1 ( \phi(a), b) + a \phi_1 (b) + \mu_1 (a, \phi(b)). \label{def-n-2}
\end{align}
The identity (\ref{def-n-1}) is equivalent $\delta_{\mathrm{Hoch}}(\mu_1) = 0$ while the identity  (\ref{def-n-2}) is equivalent to $\delta_{\mathrm{Hoch}}( \phi_1) + \delta \mu_1 = 0$. It follows from (\ref{ass-der-diff}) that $\partial ( \mu_1, \phi_1) = 0.$ Hence we get the following.

\begin{prop}\label{inf-2}
Let $(\mu_t, \phi_t)$ be a formal deformation of an AssDer pair $(A, \phi_A)$. Then the linear term $(\mu_1, \phi_1)$ is a $2$-cocycle in the cohomology of the AssDer pair $(A, \phi_A)$ with coefficients in itself.
\end{prop}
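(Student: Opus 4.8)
The plan is to extract the coefficient of $t^1$ from the two structure equations defining a formal deformation and to recognize the two resulting identities as precisely the two components of the cocycle condition $\partial(\mu_1,\phi_1)=0$ coming from the coboundary operator (\ref{ass-der-diff}).

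First I would substitute $\mu_t=\sum_{i\geq 0}t^i\mu_i$ and $\phi_t=\sum_{i\geq 0}t^i\phi_i$ into the associativity equation $\mu_t(\mu_t(a,b),c)=\mu_t(a,\mu_t(b,c))$ and the Leibniz equation $\phi_t(\mu_t(a,b))=\mu_t(\phi_t(a),b)+\mu_t(a,\phi_t(b))$, and collect the coefficient of $t^n$. This yields the families of identities (\ref{def-eq-1}) and (\ref{def-eqn-2}); the $n=0$ instances simply restate that $(A,\mu,\phi_A)$ is an AssDer pair, so they hold automatically. Specializing to $n=1$ gives (\ref{def-n-1}) and (\ref{def-n-2}).

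Then I would translate these two equations into cohomological language. Comparing (\ref{def-n-1}) with the Hochschild differential formula (\ref{hoch-diff-for}) shows it is exactly $\delta_{\mathrm{Hoch}}\mu_1=0$. Rearranging (\ref{def-n-2}), the terms $\phi_1(ab)-\phi_1(a)b-a\phi_1(b)$ assemble into $(\delta_{\mathrm{Hoch}}\phi_1)(a,b)$, while the terms $\mu_1(\phi_A a,b)+\mu_1(a,\phi_A b)-\phi_A(\mu_1(a,b))$ assemble into $(\delta\mu_1)(a,b)$ by the definition of $\delta$ (with $\phi_M=\phi_A$), so (\ref{def-n-2}) becomes $\delta_{\mathrm{Hoch}}\phi_1+\delta\mu_1=0$. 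By the second case of (\ref{ass-der-diff}) applied with $n=2$, these two identities together say exactly that $\partial(\mu_1,\phi_1)=\big(\delta_{\mathrm{Hoch}}\mu_1,\ \delta_{\mathrm{Hoch}}\phi_1+(-1)^2\delta\mu_1\big)=0$, i.e. $(\mu_1,\phi_1)$ is a $2$-cocycle in $C^2_{\mathrm{AssDer}}(A,A)$.

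The only real work is bookkeeping: matching signs and confirming that the terms generated by the $t^1$-coefficient of the Leibniz identity line up slot-by-slot with the definition $\delta f=\sum_i f\circ(\mathrm{id}\otimes\cdots\otimes\phi_A\otimes\cdots\otimes\mathrm{id})-\phi_A\circ f$. There is no conceptual obstacle; the statement is immediate once the dictionary between the deformation equations and the AssDer coboundary $\partial$ is in place.
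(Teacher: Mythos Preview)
Your proposal is correct and follows exactly the approach taken in the paper: expand the deformation equations, extract the $t^1$-coefficient to obtain (\ref{def-n-1}) and (\ref{def-n-2}), and identify these as $\delta_{\mathrm{Hoch}}\mu_1=0$ and $\delta_{\mathrm{Hoch}}\phi_1+\delta\mu_1=0$, which together give $\partial(\mu_1,\phi_1)=0$ via (\ref{ass-der-diff}). One tiny bookkeeping slip: the combination $\phi_1(ab)-\phi_1(a)b-a\phi_1(b)$ is $-(\delta_{\mathrm{Hoch}}\phi_1)(a,b)$ rather than $+(\delta_{\mathrm{Hoch}}\phi_1)(a,b)$, but since the companion $\mu_1$-terms pick up the same overall sign this does not affect the conclusion.
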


The $2$-cocycle $(\mu_1, \phi_1)$ is called the infinitesimal of the formal deformation $(\mu_t, \phi_t).$ 

\begin{defn}
Let $(\mu_t, \phi_t)$ and $(\mu_t', \phi_t')$ be two formal deformations of an AssDer pair $(A, \phi_A)$. They are said to be equivalent if there exists a formal isomorphism $\Phi_t = \sum_{i \geq 0} t^i \Phi_i: A[[t]] \rightarrow A[[t]]$ with $\Phi_0 = \mathrm{id}_A$, such that
\begin{align*}
\Phi_t \circ \mu_t =~ \mu_t' \circ ( \Phi_t \otimes \Phi_t )  \quad \text{ and } \quad 
\Phi_t \circ \phi_t =~ \phi_t' \circ \Phi_t.
\end{align*}
\end{defn}

It follows that the following identities must hold (by equating coefficients of $t^n$ from both sides)
\begin{align*}
\sum_{i+j = n} \Phi_i \circ \mu_j =~ \sum_{i +j +k = n } \mu_i' \circ (\Phi_j \otimes \Phi_k) \quad \text{ and } \quad 
\sum_{i+j = n} \Phi_i \circ \phi_j =~ \sum_{i+j = n } \phi_i' \circ \Phi_j.
\end{align*}
For $n = 0$, both the identities hold as $\Phi_0 = \mathrm{id}_A$. For $n = 1$, we obtain
\begin{align*}
\mu_1 + \Phi_1 \circ \mu =~& \mu_1' + \mu \circ ( \Phi_1 \otimes \mathrm{id} ) + \mu \circ (\mathrm{id} \otimes \Phi_1 ) ~~~~ \text{ and } ~~~~
\phi_1 + \Phi_1 \circ \phi_A = \phi_1' + \phi_A \circ \Phi.
\end{align*}
This implies that $(\mu_1, \phi_1) - (\mu_1' , \phi_1') = \partial (\Phi_1).$
Thus we have the following.

\begin{thm}
The infinitesimals corresponding to equivalent deformations of an AssDer pair $(A, \phi_A)$ are cohomologous. Therefore, they correspond to the same cohomology class.
\end{thm}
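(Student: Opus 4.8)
The plan is to unwind the definition of equivalence at the level of the linear (first-order) terms and to recognise the resulting identity as a coboundary relation in the AssDer cochain complex $C^\bullet_{\mathrm{AssDer}}(A,A)$; in fact the relevant first-order expansions have already been recorded just above the statement, so the proof amounts to interpreting them correctly. Concretely, let $(\mu_t,\phi_t)$ and $(\mu_t',\phi_t')$ be equivalent formal deformations of $(A,\phi_A)$, intertwined by a formal isomorphism $\Phi_t=\sum_{i\ge 0}t^i\Phi_i$ with $\Phi_0=\mathrm{id}_A$, so that $\Phi_t\circ\mu_t=\mu_t'\circ(\Phi_t\otimes\Phi_t)$ and $\Phi_t\circ\phi_t=\phi_t'\circ\Phi_t$ hold as identities of formal power series. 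First I would expand both identities and extract the coefficient of $t^1$.

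From the first identity the $t^1$-coefficient gives $\mu_1+\Phi_1\circ\mu=\mu_1'+\mu\circ(\Phi_1\otimes\mathrm{id})+\mu\circ(\mathrm{id}\otimes\Phi_1)$, that is, $(\mu_1-\mu_1')(a,b)=\Phi_1(a)b+a\Phi_1(b)-\Phi_1(ab)=\delta_{\mathrm{Hoch}}(\Phi_1)(a,b)$, where $\Phi_1\in\mathrm{Hom}(A,A)=C^1_{\mathrm{AssDer}}(A,A)$. From the second identity the $t^1$-coefficient gives $\phi_1+\Phi_1\circ\phi_A=\phi_1'+\phi_A\circ\Phi_1$, hence $\phi_1-\phi_1'=\phi_A\circ\Phi_1-\Phi_1\circ\phi_A=-\delta\Phi_1$, using the definition of $\delta$ on a $1$-cochain with $\phi_M=\phi_A$. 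Comparing with the formula $(\ref{ass-der-diff})$ for the AssDer differential on $1$-cochains, $\partial f=(\delta_{\mathrm{Hoch}}f,-\delta f)$, these two computations combine into $(\mu_1,\phi_1)-(\mu_1',\phi_1')=\partial\Phi_1$ in $C^2_{\mathrm{AssDer}}(A,A)$.

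To conclude, I would appeal to Proposition~\ref{inf-2}: both $(\mu_1,\phi_1)$ and $(\mu_1',\phi_1')$ are $2$-cocycles, and the identity just obtained realises their difference as the coboundary of $\Phi_1\in C^1_{\mathrm{AssDer}}(A,A)$, so they are cohomologous and therefore determine the same class in $H^2_{\mathrm{AssDer}}(A,A)$. I do not expect any real obstacle: the argument is purely formal manipulation of power series. The only point demanding a little care is the sign in the second slot — checking that the first-order part of the derivation-compatibility condition produces $-\delta\Phi_1$ rather than $+\delta\Phi_1$ — which is dictated by the convention $\delta f=\sum_i f\circ(\mathrm{id}\otimes\cdots\otimes\phi_A\otimes\cdots\otimes\mathrm{id})-\phi_M\circ f$. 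If one prefers a more conceptual route, the same conclusion follows by writing equivalence as conjugation of the Maurer-Cartan element $(\mu,\phi_A)$ by $\Phi_t$ in the graded Lie algebra of Proposition~\ref{deg-min} and taking the linear term, but the direct comparison of coefficients is shortest.
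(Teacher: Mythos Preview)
Your proposal is correct and follows essentially the same approach as the paper: the first-order expansions of the two equivalence identities are precisely those recorded in the paragraph preceding the theorem, and you correctly interpret them as $(\mu_1,\phi_1)-(\mu_1',\phi_1')=\partial\Phi_1$. Your added care about the sign in the $-\delta\Phi_1$ term and the explicit appeal to Proposition~\ref{inf-2} are welcome clarifications but do not depart from the paper's argument.
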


To obtain a one-to-one correspondence between the second cohomology group $H^2_{\mathrm{AssDer}}(A, A)$ and equivalence classes of certain type deformations, we use the truncated version of formal deformations.

\begin{defn}
An infinitesimal deformation of an AssDer pair $(A, \phi_A)$ is a deformation of $(A, \phi_A)$ over $\mathbb{K}[[t]]/ (t^2)~~$ ~~(the local Artinian ring of dual numbers).
\end{defn}

Thus, an infinitesimal deformation of $(A, \phi_A)$ consists  of a pair $(\mu_t, \phi_t)$ in which $\mu_t = \mu + t \mu_1$ and $\phi_t = \phi_A + t \phi_1$ such that $(\mu_1, \phi_1)$ is a $2$-cocycle in the cohomology of the AssDer pair $(A, \phi_A)$. 
%With this definition, we have the following.

\begin{thm}
There is a one-to-one correspondence between the space of equivalence classes of infinitesimal deformations of the AssDer pair $(A, \phi_A)$ and the second cohomology group $H^2_{\mathrm{AssDer}}(A, A).$
\end{thm}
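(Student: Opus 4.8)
The plan is to construct the bijection explicitly. In one direction, to an infinitesimal deformation $(\mu_t, \phi_t)$ with $\mu_t = \mu + t\mu_1$ and $\phi_t = \phi_A + t\phi_1$, I would assign the class $[(\mu_1, \phi_1)] \in H^2_{\mathrm{AssDer}}(A, A)$; that $(\mu_1, \phi_1)$ is a $2$-cocycle follows from Proposition \ref{inf-2} read over the truncated ring $\mathbb{K}[[t]]/(t^2)$, since the associativity and derivation identities, expanded in powers of $t$, reduce modulo $t^2$ to the $n=0$ instances of (\ref{def-eq-1}) and (\ref{def-eqn-2}), which hold automatically, together with the $n=1$ equations (\ref{def-n-1}) and (\ref{def-n-2}), which by (\ref{ass-der-diff}) say exactly $\partial(\mu_1, \phi_1) = 0$. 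In the other direction, given a $2$-cocycle $(\mu_1, \phi_1)$, i.e. $\delta_{\mathrm{Hoch}}\mu_1 = 0$ and $\delta_{\mathrm{Hoch}}\phi_1 + \delta\mu_1 = 0$, I would set $\mu_t := \mu + t\mu_1$ and $\phi_t := \phi_A + t\phi_1$ and check that these define a deformation of $(A, \phi_A)$ over $\mathbb{K}[[t]]/(t^2)$: again only the $n=0$ and $n=1$ instances of (\ref{def-eq-1}) and (\ref{def-eqn-2}) are relevant modulo $t^2$, and the $n=1$ ones are precisely the cocycle conditions just assumed. This produces an infinitesimal deformation with linear term $(\mu_1, \phi_1)$, giving surjectivity and showing that the composite of the two assignments in one order is the identity.

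Next I would fix the notion of equivalence: two infinitesimal deformations $(\mu_t, \phi_t)$, $(\mu_t', \phi_t')$ are equivalent if there is $\Phi_t = \mathrm{id}_A + t\Phi_1$ over $\mathbb{K}[[t]]/(t^2)$ with $\Phi_t \circ \mu_t = \mu_t' \circ (\Phi_t \otimes \Phi_t)$ and $\Phi_t \circ \phi_t = \phi_t' \circ \Phi_t$, where $\Phi_t \otimes \Phi_t$ is itself truncated so that its order-$t^2$ term $\Phi_1 \otimes \Phi_1$ disappears. Equating coefficients of $t$ (the $t^0$ parts agree since $\Phi_0 = \mathrm{id}_A$) gives exactly $(\mu_1, \phi_1) - (\mu_1', \phi_1') = \partial\Phi_1$, by the same computation already carried out for the preceding theorem on formal deformations. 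Read forwards, this shows the class $[(\mu_1,\phi_1)]$ depends only on the equivalence class, so the first map descends to equivalence classes; read backwards, it shows that cohomologous cocycles with $(\mu_1,\phi_1) - (\mu_1',\phi_1') = \partial\Phi_1$ give equivalent infinitesimal deformations via $\Phi_t = \mathrm{id}_A + t\Phi_1$, which is injectivity. Finally, an infinitesimal deformation is completely determined by $\mu$, $\phi_A$ and its linear term, so the two maps are mutually inverse.

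I do not expect a genuine obstacle: the argument runs exactly parallel to the classical cases for associative and Lie algebras, and every nontrivial identity required — that the $n=1$ deformation equations coincide with the AssDer $2$-cocycle condition $\partial(\mu_1,\phi_1)=0$, and the transformation law $(\mu_1,\phi_1) - (\mu_1',\phi_1') = \partial\Phi_1$ — has already appeared in the text. The only point demanding care is the truncation bookkeeping: one must check that no higher-order ($t^n$ with $n \geq 2$) obstruction equations survive modulo $t^2$, which is immediate, and that the order-$t^2$ contributions in $\Phi_t \otimes \Phi_t$ and in products of the power series are correctly discarded when working over $\mathbb{K}[[t]]/(t^2)$.
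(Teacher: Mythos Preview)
Your proposal is correct and follows essentially the same approach as the paper: both directions of the bijection are constructed explicitly, with the forward map sending $(\mu_t,\phi_t)$ to $[(\mu_1,\phi_1)]$ (well-defined by Proposition~\ref{inf-2} and the preceding theorem on equivalent deformations), and the inverse sending a $2$-cocycle $(\mu_1,\phi_1)$ to $(\mu+t\mu_1,\phi_A+t\phi_1)$, with cohomologous cocycles yielding equivalent deformations via $\Phi_t=\mathrm{id}_A+th$. Your write-up is in fact slightly more careful than the paper's about the truncation bookkeeping modulo $t^2$.
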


\begin{proof}
It is already shown that the map
\begin{align*}
\mathrm{infinitesimal~ deformations} / \sim ~~ \longrightarrow ~~ H^2_{\mathrm{AssDer}}(A, A) ~\text{ given by }~  [(\mu_t, \phi_t)] \mapsto [(\mu_1, \phi_1)]
\end{align*}
is well-defined. This map is bijective with the inverse map given as follows. For any $2$-cocycle  $(\mu_1, \phi_1) \in C^2_{\mathrm{AssDer}}(A, A)$, the pair $(\mu_t = \mu + t \mu_1, \phi_t = \phi_A + t \phi_1)$ defines an infinitesimal deformation of $(A, \phi_A)$. If $(\mu_1', \phi_1') \in C^2_{\mathrm{AssDer}}(A, A)$ is another $2$-cocycle cohomologous to $(\mu_1, \phi_1)$, then we have $(\mu_1, \phi_1) - (\mu_1' , \phi_1') = \partial h,$ for some $h \in \mathrm{Hom}(A, A) = C^1_{\mathrm{AssDer}}(A, A)$. In such a case, $\Phi_t = \mathrm{id}_A + t h$ defines an equivalence between the infinitesimal deformations $(\mu_t = \mu + t \mu_1, \phi_t = \phi_A + t \phi_1)$ and 
$(\mu_t' = \mu + t \mu_1', \phi_t' = \phi_A + t \phi_1')$. Therefore, the inverse map is also well-defined.
\end{proof}

\begin{defn}
A formal deformation $(\mu_t, \phi_t)$ of an $\mathrm{AssDer}$ pair $(A, \phi_A)$ is said to be trivial if it is equivalent to $(\mu_t' = \mu, \phi_t' = \phi_A)$.
\end{defn}

\begin{thm}\label{h-two}
If $H^2_{\mathrm{AssDer}} (A, A) = 0$ then every formal deformation of the AssDer pair $(A, \phi_A)$ is trivial.
\end{thm}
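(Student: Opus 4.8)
The plan is to mimic the classical Gerstenhaber argument for rigidity of associative algebras, adapted to the pair structure. Let $(\mu_t, \phi_t)$ be a formal deformation of $(A,\phi_A)$. By Proposition \ref{inf-2} the linear term $(\mu_1,\phi_1)$ is a $2$-cocycle in the cohomology of the AssDer pair, so the hypothesis $H^2_{\mathrm{AssDer}}(A,A)=0$ forces $(\mu_1,\phi_1) = \partial h_1$ for some $h_1 \in \mathrm{Hom}(A,A) = C^1_{\mathrm{AssDer}}(A,A)$. Set $\Phi_t = \mathrm{id}_A + t\, h_1 : A[[t]] \to A[[t]]$, which is a formal isomorphism since $\Phi_0 = \mathrm{id}_A$. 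Transporting the deformation $(\mu_t,\phi_t)$ along $\Phi_t$ yields an equivalent deformation $(\mu_t', \phi_t')$ whose linear term vanishes; concretely one checks, by the same computation that precedes the statement of this theorem (equating coefficients of $t^1$), that the new linear term is $(\mu_1,\phi_1) - \partial h_1 = 0$.

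The main step is the induction. Suppose inductively that after finitely many such changes of variable we have arrived at an equivalent deformation, still denoted $(\mu_t,\phi_t)$, with $\mu_i = 0$ and $\phi_i = 0$ for $1 \le i \le n-1$ (for some $n \ge 2$). I would then examine the deformation equations \eqref{def-eq-1} and \eqref{def-eqn-2} in degree $n$. Because all lower-order terms except $\mu_0 = \mu$ and $\phi_0 = \phi_A$ vanish, the degree-$n$ equations collapse to
\begin{align*}
\mu_n(ab,c) + \mu(\mu_n(a,b),c) &= \mu(a,\mu_n(b,c)) + \mu_n(a,bc),\\
\phi_A(\mu_n(a,b)) + \phi_n(ab) &= \mu(\phi_n(a),b) + \mu_n(\phi_A(a),b) + \mu(a,\phi_n(b)) + \mu_n(a,\phi_A(b)),
\end{align*}
which say exactly that $\delta_{\mathrm{Hoch}}(\mu_n) = 0$ and $\delta_{\mathrm{Hoch}}(\phi_n) + \delta\mu_n = 0$, i.e. $\partial(\mu_n,\phi_n) = 0$ in $C^3_{\mathrm{AssDer}}(A,A)$ by \eqref{ass-der-diff}. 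Hence $(\mu_n,\phi_n)$ is a $2$-cocycle, so by hypothesis $(\mu_n,\phi_n) = \partial h_n$ for some $h_n \in \mathrm{Hom}(A,A)$. Now set $\Phi_t = \mathrm{id}_A + t^n h_n$ and transport; the degree-$n$ computation gives a new deformation with $\mu_i = \phi_i = 0$ for $1 \le i \le n$, completing the inductive step. Taking the formal composite (inverse limit) of all these isomorphisms $\mathrm{id}_A + t^n h_n$ — which is well defined since the $n$-th one only modifies terms of order $\ge n$ — produces a single formal isomorphism $\Phi_t$ with $\Phi_0 = \mathrm{id}_A$ carrying $(\mu_t,\phi_t)$ to $(\mu,\phi_A)$, so the original deformation is trivial.

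The only genuinely delicate point is the bookkeeping in the inductive step: one must verify that transporting $(\mu_t,\phi_t)$ along $\Phi_t = \mathrm{id}_A + t^n h_n$ leaves the terms of order $< n$ untouched and replaces $(\mu_n,\phi_n)$ by $(\mu_n,\phi_n) - \partial h_n$, with all the "error" pushed into orders $> n$. This is the standard associative-algebra computation (and its derivation analogue), so I would cite \cite{gers} for the multiplicative part and simply remark that the derivation part is identical to the computation carried out just before Proposition \ref{inf-2}; I would not reproduce it in full. The convergence of the infinite composition of the $\Phi$'s is automatic in the $t$-adic topology on $A[[t]]$.
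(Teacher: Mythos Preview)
Your proposal is correct and follows essentially the same approach as the paper: kill the lowest nonzero term by choosing $\Phi_t = \mathrm{id}_A + t^n h_n$ with $\partial h_n = (\mu_n,\phi_n)$, and iterate. In fact you are more careful than the paper, which simply says ``by repeating this argument'' without spelling out why the first nonvanishing term $(\mu_n,\phi_n)$ is again a $2$-cocycle or why the infinite composition converges; your explicit verification of the degree-$n$ deformation equations and your remark on $t$-adic convergence fill in exactly those points.
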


\begin{proof}
Let $(\mu_t, \phi_t)$ be any formal one-parameter deformation of $(A, \phi_A)$. It follows from Proposition \ref{inf-2} that the linear term $(\mu_1, \phi_1)$ is a $2$-cocycle. From the given hypothesis, there exists a $1$-cochain $\Phi_1 \in C^1_{\mathrm{AssDer}} (A, A) = \mathrm{Hom}(A, A)$ such that
%\begin{align*}
$(\mu_1, \phi_1) = \partial \Phi_1.$
%\end{align*}
Setting $\Phi_t = \text{id}_A + \Phi_1 t : A[[t]] \rightarrow A[[t]]$ and define
\begin{align}\label{h2=0}
\mu_t' = \Phi_t^{-1} \circ \mu_t \circ (\Phi_t \otimes \Phi_t), \qquad \phi_t' = \Phi_t^{-1} \circ \phi_t \circ \Phi_t.
\end{align}
Then $(\mu_t', \phi_t')$ is equivalent to $(\mu_t, \phi_t)$. Moreover, it follows from (\ref{h2=0}) that $\mu_t'$ and $\phi_t'$ are of the form
$\mu_t' = \mu + t^2 \mu_2'  + \cdots  ~~~ \mbox{and} ~~~ \phi_t' = \phi_A + t^2 \phi_2' + \cdots.$
In other words, the linear terms of $\mu_t'$ and $\phi_t'$ vanish.  By repeating this argument, one can show that $(\mu_t, \phi_t)$ is equivalent to $(\mu, \phi_A).$
\end{proof}

\begin{remark}\label{h-two-rem}
An AssDer pair $(A, \phi_A)$ is said to be rigid if every formal deformation is equivalent to $(\mu, \phi_A)$. It follows that the vanishing of the second cohomology is a sufficient condition for the rigidity.
\end{remark}

\subsection{Extensions of finite order deformation}
%In this subsection, we consider deformations of order $n$ of an AssDer pair. To any deformation of order $n$, we associate a $3$-cocycle in the cohomology of the AssDer pair with coefficients in itself. We show that such a deformation extends to deformation of order $(n+1)$ if and only if the corresponding cohomology class is trivial.

Let $(A, \phi_A)$ be an AssDer pair. Consider the $\mathbb{K}[[t]]/ (t^{n+1})$-module $A[[t]]/ (t^{n+1})$. A deformation of order $n$ of the AssDer $(A, \phi_A)$ consists of a pair $(\mu_t, \phi_t )$ where $\mu_t = \sum_{i=0}^n t^i \mu_i$ and $\phi_t = \sum_{i=0}^n t^i \phi_i$ such that $\mu_t$ defines an associative product on $A[[t]]/ (t^{n+1})$ and $\phi_t$ defines a derivation on it.

Thus, in a deformation of order $n$, the following identities must hold
\begin{align*}
\sum_{i+j = k} \mu_i (\mu_j (a, b), c) = \sum_{i+j = k} \mu_i ( a, \mu_j (b, c)) ~~~ \text{ and } ~~~
\sum_{i+j = k} \phi_i (\mu_j (a, b)) = \sum_{i+j = k} \mu_i (\phi_j (a), b) + \mu_i (a, \phi_j (b)),
\end{align*}
for $k = 0, 1, \ldots, n.$ In other words,
\begin{align}
\delta_\mathrm{Hoch} (\mu_k) =~& \frac{1}{2} \sum_{i+j=k, i, j > 0} [\mu_i, \mu_j], \quad \mbox{and}\label{n-def-eqn-1}\\
\delta_\mathrm{Hoch} (\phi_k) + \delta (\mu_{k}) =~& \sum_{i+j = k, i, j >0} [\phi_i, \mu_j ].  \label{n-def-eqn-2}
\end{align}
Let $(\mu_{n+1}, \phi_{n+1}) \in C^2_{\mathrm{AssDer}} (A, A)$ be such that $(\mu_t' = \sum_{i=0}^n t^i \mu_i + t^{n+1} \mu_{n+1} ,~ \phi_t' = \sum_{i=0}^n t^i \phi_i + t^{n+1} \phi_{n+1})$ defines a deformation of order $n+1$.
Then the deformation $(\mu_t = \sum_{i=0}^n t^i \mu_i, \phi_t = \sum_{i=0}^n t^i \phi_i)$ is said to be extensible.
In such a case, two more equations need to be satisfied, namely,
%\begin{align}
%\sum_{i+j = n+1} \mu_i (\mu_j (a, b), c) =~& \sum_{i+j = n+1} \mu_i ( a, \mu_j (b, c)) \quad \mbox{and} \label{n-plus-one-1}\\
%\sum_{i+j = n+1} \phi_i (\mu_j (a, b)) =~& \sum_{i+j = n+1} \mu_i (\phi_j (a), b) + \mu_i (a, \phi_j (b)). \label{n-plus-one-2}
%\end{align}
%The identities (\ref{n-plus-one-1}) and (\ref{n-plus-one-2}) can be written as 
%\begin{align*}
\begin{equation*}
\begin{split}
\delta_{\mathrm{Hoch}}(\mu_{n+1})  =~& \frac{1}{2} \sum_{i+j = n+1, i, j > 0} [\mu_i, \mu_j] \quad (= \mathrm{Ob}^3(a,b, c) ~ \mathrm{say}),\\
\delta_{\mathrm{Hoch}} (\phi_{n+1}) + \delta (\mu_{n+1}) =~&  \sum_{i+j = n+1, i, j > 0} [\phi_i, \mu_j]  \quad (= \mathrm{Ob}^2(a, b) ~ \mathrm{say}). 
\end{split}
\end{equation*}
%Note that, in terms of the Gerstenhaber bracket (\ref{gers-brkt}), we have
%\begin{align*}
%\mathrm{Ob}^3 =  ~~~ \qquad \mathrm{ and }\qquad \mathrm{Ob}^2 = .
%\end{align*}

\begin{prop}\label{obs-class}
The pair $(\mathrm{Ob}^3, \mathrm{Ob}^2)$ is a $3$-cocycle in the cohomology of the AssDer pair $(A, \phi_A)$ with coefficients in itself.
\end{prop}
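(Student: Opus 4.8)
The plan is to verify that $\partial(\mathrm{Ob}^3, \mathrm{Ob}^2) = 0$ in $C^4_{\mathrm{AssDer}}(A,A)$, which by the definition (\ref{ass-der-diff}) of the coboundary on a $3$-cochain amounts to the two identities
\[
\delta_{\mathrm{Hoch}}(\mathrm{Ob}^3) = 0 \qquad \text{and} \qquad \delta_{\mathrm{Hoch}}(\mathrm{Ob}^2) - \delta(\mathrm{Ob}^3) = 0.
\]
The cleanest route is to work entirely with the Gerstenhaber bracket on $C^\star(A,A)$ and the derived bracket description $\delta_{\mathrm{Hoch}} f = (-1)^{n-1}[\mu, f]$, $\delta f = -[\phi_A, f]$, together with the expressions $\mathrm{Ob}^3 = \tfrac12 \sum_{i+j=n+1,\, i,j>0}[\mu_i,\mu_j]$ and $\mathrm{Ob}^2 = \sum_{i+j=n+1,\, i,j>0}[\phi_i,\mu_j]$.

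First I would prove the Hochschild part, $[\mu, \mathrm{Ob}^3] = 0$. This is exactly the classical computation appearing in Gerstenhaber's deformation theory for associative algebras: using the graded Jacobi identity for $[~,~]$ and the order-$n$ deformation equations (\ref{n-def-eqn-1}) written as $[\mu,\mu_k] = -\tfrac12\sum_{i+j=k,\,i,j>0}[\mu_i,\mu_j]$ for $k \le n$, one expands $[\mu, \sum_{i+j=n+1}[\mu_i,\mu_j]]$, moves $\mu$ inside each bracket, and the terms recombine and cancel in pairs. I would simply cite that this is the standard argument (it is literally the statement that the obstruction to extending a deformation of an associative algebra is a Hochschild $3$-cocycle).

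Next comes the genuinely new part: the identity $[\mu, \mathrm{Ob}^2] + (-1)^{?}[\phi_A, \mathrm{Ob}^3] = 0$ with the sign dictated by (\ref{ass-der-diff}). Here I would expand $[\mu, \sum_{i+j=n+1}[\phi_i,\mu_j]]$ by the Jacobi identity into $\sum[[\mu,\phi_i],\mu_j] \pm \sum[\phi_i,[\mu,\mu_j]]$, then substitute the lower-order equations (\ref{n-def-eqn-2}), i.e. $[\mu,\phi_k] = [\phi_A,\mu_k] + \sum_{i+j=k,\,i,j>0}[\phi_i,\mu_j]$, and (\ref{n-def-eqn-1}) for the $[\mu,\mu_j]$ factors. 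After these substitutions, the terms not involving $\phi_A$ must cancel among themselves by Jacobi and the symmetry $[\mu_i,\mu_j] = [\mu_j,\mu_i]$ (degree $2$ elements), while the terms involving $\phi_A$ should collect exactly into $\pm[\phi_A, \sum_{i+j=n+1}[\mu_i,\mu_j]] = \pm 2[\phi_A, \mathrm{Ob}^3]$, giving the desired relation. Throughout I would keep in mind that $(\mu,\phi_A)$ is a Maurer–Cartan element, so $[[\mu,\phi_A],-] = 0$ as an operator, which is what kills the stray $[[\mu,\phi_A],\mu_j]$-type terms.

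The main obstacle I anticipate is bookkeeping of signs in the Gerstenhaber-bracket Jacobi identity, since $\mu, \mu_j$ have degree $2$ and $\phi_i$ has degree $1$, so the Koszul signs $(-1)^{(m-1)(n-1)}$ alternate in a way that must be tracked carefully; getting the overall sign in the second cocycle identity to match the $(-1)^n$ in (\ref{ass-der-diff}) is where an error would most likely hide. An alternative, and perhaps safer, strategy that I would mention is to bypass explicit signs: observe that a deformation of order $n$ is precisely a Maurer–Cartan element $(\mu_t,\phi_t) = (\mu,\phi_A) + \sum_{i=1}^n t^i(\mu_i,\phi_i)$ modulo $t^{n+1}$ in the graded Lie algebra $(\bigoplus_k C^{k+1}_{\mathrm{AssDer}}(A,A), \llbracket~,~\rrbracket)$ of Proposition \ref{deg-min}, so that $(\mathrm{Ob}^3,\mathrm{Ob}^2)$ is (up to sign) the $t^{n+1}$-coefficient of $\tfrac12\llbracket(\mu_t,\phi_t),(\mu_t,\phi_t)\rrbracket$; then $\partial(\mathrm{Ob}^3,\mathrm{Ob}^2)$ is the $t^{n+1}$-coefficient of $\tfrac12\llbracket(\mu,\phi_A),\llbracket(\mu_t,\phi_t),(\mu_t,\phi_t)\rrbracket\rrbracket = \tfrac14\llbracket\llbracket(\mu_t,\phi_t),(\mu_t,\phi_t)\rrbracket,(\mu_t,\phi_t)\rrbracket \bmod t^{n+1}$, which vanishes since $\llbracket(\mu_t,\phi_t),(\mu_t,\phi_t)\rrbracket \equiv 0 \bmod t^{n+1}$ by the order-$n$ deformation equations. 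This conceptual argument makes the sign conventions automatic and is the version I would ultimately present.
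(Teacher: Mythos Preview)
Your primary approach---citing the classical Gerstenhaber argument for $\delta_{\mathrm{Hoch}}(\mathrm{Ob}^3)=0$ and then expanding $-[\mu,\mathrm{Ob}^2]+[\phi_A,\mathrm{Ob}^3]$ via the graded Jacobi identity and substituting the order-$\leq n$ deformation equations (\ref{n-def-eqn-1}), (\ref{n-def-eqn-2})---is exactly what the paper does, down to the same organization of terms.

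Your alternative Maurer--Cartan argument is a genuinely different and cleaner route. The paper sets up the graded Lie algebra $(\bigoplus_k C^{k+1}_{\mathrm{AssDer}}(A,A),\llbracket~,~\rrbracket)$ in Proposition~\ref{deg-min} and observes that $\partial = (-1)^{n-1}\llbracket(\mu,\phi_A),-\rrbracket$, but then abandons that viewpoint in favor of a direct bracket computation in the proof of Proposition~\ref{obs-class}. Your observation that an order-$n$ deformation is a Maurer--Cartan element modulo $t^{n+1}$, that $(\mathrm{Ob}^3,\mathrm{Ob}^2)$ is the $t^{n+1}$-coefficient of $\tfrac12\llbracket(\mu_t,\phi_t),(\mu_t,\phi_t)\rrbracket$, and that the cocycle condition then follows from the identity $\llbracket\alpha,\llbracket\alpha,\alpha\rrbracket\rrbracket=0$ for an odd element $\alpha$, gives a one-line proof that handles both components at once and makes all sign bookkeeping automatic. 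The paper's explicit approach has the minor advantage of being self-contained at the level of the Gerstenhaber bracket alone, but your packaging is the standard DGLA argument and is preferable.
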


\begin{proof}
It is known from the finite order deformations of associative algebras \cite{gers} that the obstruction $\mathrm{Ob}^3$ is a Hochschild $3$-cocycle in the cohomology of $A$, i.e. $\delta_{\mathrm{Hoch}} ( \mathrm{Ob}^3) = 0$. Moreover, we have
\begin{equation*}
\begin{split}
&\delta_{\mathrm{Hoch}} (\mathrm{Ob}^2) + (-1)^3~ \delta (\mathrm{Ob}^3) 
= - [\mu, \mathrm{Ob}^2] + [\phi_A, \mathrm{Ob}^3] \\
%&= - \sum_{i+j = n+1, i, j > 0} [\mu, [\phi_i, \mu_j ]] + \frac{1}{2} \sum_{ \substack{i+j = n+1 \\ i, j > 0 }} [\phi_A, [\mu_i, \mu_j]] \\
&= - \sum_{ \substack{i+j = n+1 \\ i, j > 0 }} \big(  [[\mu, \phi_i], \mu_j] + [\phi_i, [\mu, \mu_j]]    \big) + \frac{1}{2} \sum_{ \substack{i+j = n+1 \\ i, j > 0 }} \big(  [[\phi_A, \mu_i], \mu_j] + [\mu_i, [\phi_A, \mu_j]]   \big) \\
&= - \sum_{ \substack{i+j = n+1 \\ i, j > 0 }} \big(  [[\mu, \phi_i], \mu_j] + [\phi_i, [\mu, \mu_j]]    \big) + \sum_{ \substack{i+j = n+1 \\ i, j > 0 }} [[\phi_A, \mu_i], \mu_j] \\
&= - \sum_{ \substack{i+j = n+1 \\ i, j > 0 }}
\big(  [[\mu, \phi_i], \mu_j]      - [[\phi_A, \mu_i], \mu_j]          ) + \frac{1}{2} 
\sum_{ \substack{i+j'+j'' = n+1 \\ i, j', j'' > 0 }}
[\phi_i, [\mu_j', \mu_j'']] 
%+ \sum_{\begin{array}{c} {i+j = n+1},\\{ i, j > 0}\end{array}} [[\phi_A, \mu_i], \mu_j] \quad
 (\mathrm{by }~~ (\ref{n-def-eqn-1}))\\
&= - \sum_{ \substack{i'++i''+j = n+1 \\ i', i'', j > 0 }}
 [[\phi_{i'}, \mu_{i''} ], \mu_j ] - \sum_{ \substack{i+j = n+1 \\ i, j > 0 }} ( [[\phi_A, \mu_i], \mu_j] - [[ \phi_A, \mu_i] , \mu_j] ) \\
& \quad + \frac{1}{2} \sum_{ \substack{i+j'+j'' = n+1 \\ i, j', j'' > 0 }}
 ( [[\phi_i, \mu_{j'}], \mu_{j''}] +[ \mu_{j'}, [ \phi_i, \mu_{j''} ]] )
%+ \sum_{i+j = n+1, i, j > 0} [[\phi_A, \mu_i], \mu_j] \quad 
(\mathrm{by }~~ (\ref{n-def-eqn-2}))\\
&= - \sum_{ \substack{i' + i''+j = n+1 \\ i', i'', j > 0 }}
[[\phi_{i'}, \mu_{i''}], \mu_j] + \sum_{ \substack{i+j' + j'' = n+1 \\ i, j', j'' > 0 }}
 [[ \phi_i, \mu_{j'}], \mu_{j''}] = 0.
\end{split}
\end{equation*}
Thus,
%\begin{align*}
$\partial (\mathrm{Ob}^3, \mathrm{Ob}^2) = (\delta_{\mathrm{Hoch}} (\mathrm{Ob}^3), \delta_{\mathrm{Hoch}}( \mathrm{Ob}^2) + (-1)^3~ \delta (\mathrm{Ob}^3)) = 0.$
%\end{align*}
\end{proof}

Therefore, $(\mathrm{Ob}^3, \mathrm{Ob}^2)$ defines a cohomology class in $H^3_{\mathrm{AssDer}}(A, A)$. If this cohomology class vanishes, i.e.  $(\mathrm{Ob}^3, \mathrm{Ob}^2)$ is a coboundary, then we have
%\begin{align*}
$\partial (\mu_{n+1}, \phi_{n+1}) = (\mathrm{Ob}^3, \mathrm{Ob}^2 ),$
%\end{align*}
for some $(\mu_{n+1}, \phi_{n+1}) \in C^2_{\mathrm{AssDer}}(A, A)$. In such a case $(\mu_t' = \mu_t + t^{n+1} \mu_{n+1} , \phi_t' = \phi_t + t^{n+1} \phi_{n+1} )$ defines a deformation of order $n+1$. Therefore, the deformation $(\mu_t, \phi_t)$ becomes extensible. On the other hand, if $(\mu_t, \phi_t)$ is extensible, there exists $(\mu_{n+1}, \phi_{n+1}) \in C^2_{\mathrm{AssDer}}(A, A)$ such that $(\mu_t' = \mu_t + t^{n+1} \mu_{n+1} , \phi_t' = \phi_t +  t^{n+1} \phi_{n+1})$ is a deformation of order $n+1$. Hence the obstruction $(\mathrm{Ob}^3, \mathrm{Ob}^2)$ is given by the coboundary $\partial (\mu_{n+1}, \phi_{n+1})$. Thus the corresponding cohomology class is null. Therefore, we obtain the following.

\begin{thm}\label{obs-class-zero}
Let $(\mu_t, \phi_t)$ be a deformation of order $n$ of the AssDer pair $(A, \phi_A)$. It is extensible if and only if the obstruction class $[(\mathrm{Ob}^3, \mathrm{Ob}^2)]$ vanishes.
\end{thm}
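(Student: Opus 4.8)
The plan is to reduce the statement to the cocycle property of $(\mathrm{Ob}^3, \mathrm{Ob}^2)$ established in Proposition \ref{obs-class}, together with a careful bookkeeping of which deformation equations are genuinely new when passing from order $n$ to order $n+1$. First I would fix a deformation of order $n$, namely $(\mu_t = \sum_{i=0}^n t^i \mu_i,\ \phi_t = \sum_{i=0}^n t^i \phi_i)$, and unwind the definition of extensibility: it asks for a pair $(\mu_{n+1}, \phi_{n+1}) \in C^2_{\mathrm{AssDer}}(A,A)$ such that $(\mu_t + t^{n+1}\mu_{n+1},\ \phi_t + t^{n+1}\phi_{n+1})$ is a deformation of order $n+1$. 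Since adding the top-degree terms $\mu_{n+1}$ and $\phi_{n+1}$ can only affect coefficients of $t^k$ with $k \geq n+1$ in the associativity constraint and in the derivation constraint, the identities (\ref{n-def-eqn-1}) and (\ref{n-def-eqn-2}) for $k \leq n$ remain automatically satisfied, so the only new conditions are the coefficient-of-$t^{n+1}$ identities (\ref{n-plus-one-1}) and (\ref{n-plus-one-2}).

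Next I would isolate, in each of these two identities, the terms in which both summation indices are strictly positive; by definition these sums are exactly $\mathrm{Ob}^3$ and $\mathrm{Ob}^2$. The remaining terms, those with one index equal to $0$, assemble precisely into $\delta_{\mathrm{Hoch}}(\mu_{n+1})$ in the first identity and into $\delta_{\mathrm{Hoch}}(\phi_{n+1}) + \delta(\mu_{n+1})$ in the second, using $\mu_0 = \mu$ and $\phi_0 = \phi_A$; this is the rewriting already recorded in the displayed block preceding Proposition \ref{obs-class}, and it rests only on the Hochschild differential formula (\ref{hoch-diff-for}) and the definition of $\delta$. Comparing with the AssDer coboundary (\ref{ass-der-diff}), I conclude that the pair $(\mu_{n+1}, \phi_{n+1})$ extends the deformation to order $n+1$ if and only if $\partial(\mu_{n+1}, \phi_{n+1}) = (\mathrm{Ob}^3, \mathrm{Ob}^2)$.

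Finally, I would invoke Proposition \ref{obs-class}, which guarantees that $(\mathrm{Ob}^3, \mathrm{Ob}^2)$ is a $3$-cocycle, so that its class $[(\mathrm{Ob}^3, \mathrm{Ob}^2)] \in H^3_{\mathrm{AssDer}}(A,A)$ is well defined. The equivalence in the theorem is then immediate: an extension to order $n+1$ exists if and only if $(\mathrm{Ob}^3, \mathrm{Ob}^2)$ lies in the image of $\partial : C^2_{\mathrm{AssDer}}(A,A) \to C^3_{\mathrm{AssDer}}(A,A)$, i.e. if and only if its cohomology class vanishes. The genuinely delicate point in this chain is not the final logical step but the index bookkeeping of the previous paragraph, in particular making sure the mixed terms with a zero index reconstruct $\delta_{\mathrm{Hoch}}$ and $\delta$ with the correct signs; once Proposition \ref{obs-class} is available the theorem is essentially a restatement of the discussion carried out just above it.
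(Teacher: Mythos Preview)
Your proposal is correct and follows essentially the same approach as the paper: the paper's argument (given in the paragraph immediately preceding the theorem) is precisely that extensibility to order $n+1$ is equivalent to the equation $\partial(\mu_{n+1},\phi_{n+1}) = (\mathrm{Ob}^3,\mathrm{Ob}^2)$ having a solution, which in turn is equivalent to the vanishing of the obstruction class. Your write-up is in fact slightly more explicit than the paper's in pointing out that the order-$\leq n$ equations are unaffected by the new top-degree terms, but the logical structure is identical.
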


\begin{thm}
If $H^3_{\mathrm{AssDer}} (A, A) = 0$ then every finite order deformation of the AssDer pair $(A, \phi_A)$ extends to a deformation of the next order.
\end{thm}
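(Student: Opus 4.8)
The plan is to deduce the statement immediately from the obstruction theory already assembled in Proposition \ref{obs-class} and Theorem \ref{obs-class-zero}, so that essentially no new computation is required. First I would fix an arbitrary deformation $(\mu_t, \phi_t)$ of order $n$ of the AssDer pair $(A,\phi_A)$, so that $\mu_t = \sum_{i=0}^n t^i \mu_i$ and $\phi_t = \sum_{i=0}^n t^i \phi_i$ satisfy the deformation equations (\ref{n-def-eqn-1}) and (\ref{n-def-eqn-2}) for all $k \leq n$. To this data I would attach the obstruction pair $(\mathrm{Ob}^3, \mathrm{Ob}^2) \in C^3_{\mathrm{AssDer}}(A,A)$ defined via the Gerstenhaber bracket by $\mathrm{Ob}^3 = \frac{1}{2} \sum_{i+j=n+1,\, i,j>0} [\mu_i,\mu_j]$ and $\mathrm{Ob}^2 = \sum_{i+j=n+1,\, i,j>0} [\phi_i,\mu_j]$, exactly as in the discussion preceding Proposition \ref{obs-class}.

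Next I would invoke Proposition \ref{obs-class}, which guarantees that $(\mathrm{Ob}^3, \mathrm{Ob}^2)$ is a $3$-cocycle and hence determines a class $[(\mathrm{Ob}^3,\mathrm{Ob}^2)] \in H^3_{\mathrm{AssDer}}(A,A)$. By the hypothesis $H^3_{\mathrm{AssDer}}(A,A) = 0$ this class is automatically zero; that is, there exists $(\mu_{n+1},\phi_{n+1}) \in C^2_{\mathrm{AssDer}}(A,A)$ with $\partial(\mu_{n+1},\phi_{n+1}) = (\mathrm{Ob}^3,\mathrm{Ob}^2)$. Then Theorem \ref{obs-class-zero} --- whose statement is precisely the equivalence between extensibility of an order-$n$ deformation and the vanishing of this obstruction class --- applies and shows that $(\mu_t,\phi_t)$ extends to the deformation $(\mu_t' = \mu_t + t^{n+1}\mu_{n+1},\ \phi_t' = \phi_t + t^{n+1}\phi_{n+1})$ of order $n+1$. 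Since the order-$n$ deformation was arbitrary, this proves the claim.

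The point I would make explicit is that there is no genuine obstacle left at this stage: all the real content --- that the two level-$(n+1)$ deformation equations are precisely $\delta_{\mathrm{Hoch}}(\mu_{n+1}) = \mathrm{Ob}^3$ and $\delta_{\mathrm{Hoch}}(\phi_{n+1}) + \delta\mu_{n+1} = \mathrm{Ob}^2$, and that $(\mathrm{Ob}^3,\mathrm{Ob}^2)$ is a cocycle --- was carried out before and in Proposition \ref{obs-class}. Hence the theorem is a one-line corollary, and the proof I would write simply strings together: $(\mathrm{Ob}^3,\mathrm{Ob}^2)$ is a cocycle, so its class lies in $H^3_{\mathrm{AssDer}}(A,A) = 0$, so the deformation is extensible by Theorem \ref{obs-class-zero}.
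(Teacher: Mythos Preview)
Your proposal is correct and matches the paper's approach exactly: the paper states this theorem without proof immediately after Theorem \ref{obs-class-zero}, treating it as the evident corollary obtained by combining Proposition \ref{obs-class} (the obstruction is a $3$-cocycle) with the hypothesis $H^3_{\mathrm{AssDer}}(A,A)=0$ and Theorem \ref{obs-class-zero}. There is nothing to add.
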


\begin{corollary}
If $H^3_{\mathrm{AssDer}} (A, A) = 0$ then every $2$-cocycle is the infinitesimal of a formal deformation of $(A, \phi_A)$.
\end{corollary}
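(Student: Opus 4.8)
The plan is to build the formal deformation degree by degree, using the obstruction theory developed above. Start with the given $2$-cocycle $(\mu_1, \phi_1) \in C^2_{\mathrm{AssDer}}(A, A)$ and set $\mu_t^{(1)} = \mu + t \mu_1$, $\phi_t^{(1)} = \phi_A + t \phi_1$. Since $(\mu_1, \phi_1)$ is a $2$-cocycle, $\partial(\mu_1, \phi_1) = 0$, which unwinds precisely to the order-$1$ deformation equations (\ref{def-n-1}) and (\ref{def-n-2}); hence $(\mu_t^{(1)}, \phi_t^{(1)})$ is a genuine deformation of order $1$. This will serve as the base case of an induction on the order.

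For the inductive step, suppose $(\mu_t^{(n)} = \sum_{i=0}^{n} t^i \mu_i,\ \phi_t^{(n)} = \sum_{i=0}^{n} t^i \phi_i)$ is a deformation of order $n$ whose linear part is the prescribed $(\mu_1, \phi_1)$. By Proposition \ref{obs-class}, the pair $(\mathrm{Ob}^3, \mathrm{Ob}^2)$ associated to this deformation is a $3$-cocycle in the cohomology of the AssDer pair with coefficients in itself, so it determines a class in $H^3_{\mathrm{AssDer}}(A, A)$. The hypothesis $H^3_{\mathrm{AssDer}}(A, A) = 0$ forces this class to vanish, so by Theorem \ref{obs-class-zero} the deformation is extensible: there exist $(\mu_{n+1}, \phi_{n+1}) \in C^2_{\mathrm{AssDer}}(A, A)$ with $\partial(\mu_{n+1}, \phi_{n+1}) = (\mathrm{Ob}^3, \mathrm{Ob}^2)$, and $(\mu_t^{(n+1)} = \mu_t^{(n)} + t^{n+1} \mu_{n+1},\ \phi_t^{(n+1)} = \phi_t^{(n)} + t^{n+1} \phi_{n+1})$ is a deformation of order $n+1$ still extending $(\mu_1, \phi_1)$.

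Iterating, the successive choices assemble into formal power series $\mu_t = \sum_{i \geq 0} t^i \mu_i$ and $\phi_t = \sum_{i \geq 0} t^i \phi_i$, since each coefficient $\mu_i, \phi_i$ is fixed at the $i$-th stage and never altered afterwards. By construction $(\mu_t, \phi_t)$ satisfies the deformation equations (\ref{def-eq-1}) and (\ref{def-eqn-2}) at every order, so it is a formal one-parameter deformation of $(A, \phi_A)$, and its infinitesimal is the given $2$-cocycle $(\mu_1, \phi_1)$.

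The argument is essentially formal once Proposition \ref{obs-class} and Theorem \ref{obs-class-zero} are available; the only point that needs a moment's care is the base case, namely verifying that the cocycle condition $\partial(\mu_1, \phi_1) = 0$ is exactly what makes $(\mu + t\mu_1, \phi_A + t\phi_1)$ an associative product with a compatible derivation modulo $t^2$. There is no genuine obstacle in the inductive step, as all possible obstructions lie in $H^3_{\mathrm{AssDer}}(A, A)$, which is assumed to vanish.
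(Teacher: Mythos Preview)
Your proof is correct and is precisely the argument the paper intends: the corollary is stated without proof as an immediate consequence of the preceding theorem, and your inductive construction---starting from the $2$-cocycle as an order-$1$ deformation and repeatedly invoking Theorem~\ref{obs-class-zero} (via Proposition~\ref{obs-class} and the hypothesis $H^3_{\mathrm{AssDer}}(A,A)=0$) to extend---is exactly how one fills in the details. The base case is already handled in the paper's discussion leading to Proposition~\ref{inf-2}, where it is observed that the cocycle condition $\partial(\mu_1,\phi_1)=0$ is equivalent to the order-$1$ deformation equations.
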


\subsection{Automorphisms of the deformed AssDer pair}

Let $(A, \phi_A)$ be an AssDer pair and $(\mu_t, \phi_t)$ be a deformation of it. Suppose $\Phi_t = \sum_{i \geq 0} t^i \Phi_i : A [[t]] \rightarrow A[[t]]$ is an automorphism of the deformed AssDer pair $(A[[t]], \mu_t, \phi_t)$. Then we have
\begin{align*}
\Phi_t \circ \mu_t = \mu_t \circ (\Phi_t \otimes \Phi_t)   ~~~~ \text{ and } ~~~~ \Phi_t \circ \phi_t = \phi_t \circ \Phi_t.
\end{align*}
This in particular implies that
\begin{align*}
\Phi_1 (ab) = \Phi_1 (a) b + a \Phi_1 (b) ~~~~ \text{ and } ~~~~ \Phi_1 \circ \phi_A = \phi_A \circ \Phi_1.
\end{align*}
Therefore, the linear term $\Phi_1$ of the automorphism $\Phi_t$ is a derivation on $A$ commuting with $\phi_A$. Thus, one may now ask when a derivation on $A$ which commutes with $\phi_A$ can be extended to an automorphism of the deformed AssDer pair $(A[[t]], \mu_t, \phi_t).$ We will consider a more general situation about extensions of a finite order automorphism of the deformed AssDer pair.

Let $\Phi_t = \sum_{i=1}^N t^i \Phi_i$ be an automorphism of order $N$. It is said to be extensible if there exists a map $\Phi_{N+1}: A \rightarrow A$ such that $\Phi_t' = \sum_{i=1}^{N+1} t^i \Phi_i$ is an automorphism of order $N+1$. In other words, the following additional identities must hold:
\begin{align}
a \Phi_{N+1}(b) + \Phi_{N+1}(a) b - \Phi_{N+1} (ab) =~&   \sum_{i + j = N+1, i \neq N+1} \Phi ( \mu_j (a, b)) - \sum_{i+j+k =N+1, j, k \neq N+1} \mu_i ( \Phi_j (a), \Phi_k (b)),  \label{new-der-ob1} \\
& \quad (= \mathrm{Ob}^1_{\Phi_t} \text{ say}) \nonumber \\
- \Phi_{N+1} \circ \phi_A + \phi_A \circ \Phi_{N+1} =~& \sum_{i + j = N+1, i \neq N+1} \Phi_i \circ \phi_j - \sum_{i+j = N+1, j \neq N+1} \phi_i \circ \Phi_j \label{new-der-ob2}\\
& \quad (= \mathrm{Ob}^2_{\Phi_t} \text{ say}). \nonumber
\end{align}
The pair $(\mathrm{Ob}^1_{\Phi_t}, \mathrm{Ob}^2_{\mathrm{\Phi_t}})$ is called the obstruction to extending the order $N$ automorphism $\Phi_t$. It has been shown in \cite{fox} that $\mathrm{Ob}^1_{\Phi_t}$ is a Hochschild $2$-cocycle. It is also not difficult to show that (similar to Proposition \ref{obs-class})
\begin{align*}
\delta_{\mathrm{Hoch}} ( \mathrm{Ob}^2_{\mathrm{\Phi_t}} ) + \delta ( \mathrm{Ob}^1_{\mathrm{\Phi_t}} ) = 0.
\end{align*}
In other words, $(\mathrm{Ob}^1_{\Phi_t}, \mathrm{Ob}^2_{\Phi_t})$ is a $2$-cocycle in the cohomology of the AssDer pair $(A, \phi_A)$.

Hence from (\ref{new-der-ob1}) and (\ref{new-der-ob2}), we get the following.

\begin{thm}\label{auto-theorem}
An order $N$ automorphism $\Phi_t = \sum_{i=1}^N t^i \Phi_i$ of the deformed AssDer pair is extensible if and only if the obstruction class $[(\mathrm{Ob}^1_{\Phi_t}, \mathrm{Ob}^2_{\mathrm{\Phi_t}})] \in H^2_{\mathrm{AssDer}} (A, A) $ vanishes.
\end{thm}

\section{Homotopy derivations on $2$-term $A_\infty$-algebras}\label{sec-hd}
In this section, we are interested in $2$-term $A_\infty$-algebras \cite{stasheff} with homotopy derivations. Note that homotopy derivation on $A_\infty$-algebras was studied by Loday \cite{loday} and further developed by Doubek-Lada \cite{doubek-lada}. 
%However, we will be most interested in $A_\infty$-algebras whose underlying graded vector space is concentrated in degrees $0$ and $1$. 
We classify homotopy derivations on skeletal and strict $A_\infty$-algebras.

\begin{defn}\label{defn-2-ass-term}
A $2$-term $A_\infty$-algebra consists of a chain complex $A:= ( A_1 \xrightarrow{d} A_0 )$ together with maps $\mu_2 : A_i \otimes A_j \rightarrow A_{i+j}$, for $0 \leq i, j, i+j \leq 1$ and a map $\mu_3 : A_0 \otimes A_0 \otimes A_0 \rightarrow A_1$ satisfying the followings: for any $a, b, c, e \in A_0$ and $m, n \in A_1$,

\begin{itemize}
        \item[(a)] $d \mu_2 (a,m) = \mu_2 (a, dm)$,
        \item[(b)] $d \mu_2 (m,a) = \mu_2 (dm, a)$,
        \item[(c)] $\mu_2 (dm, n) = \mu_2 (m, dn)$, 
        \item[(d)] $d\mu_3 (a, b, c) = \mu_2 \big(   \mu_2 (a,b), c \big) - \mu_2 \big(   a, \mu_2 (b,c) \big)$,
        \item[(e1)] $\mu_3 (a, b, dm) =  \mu_2 \big(  \mu_2 (a,b), m \big) - \mu_2 \big(  a, \mu_2 (b,m) \big),$
        \item[(e2)]     $\mu_3 (a, dm, c) =  \mu_2 \big(  \mu_2 (a,m), c \big) - \mu_2 \big(  a, \mu_2 (m,c) \big) $,
        \item[(e3)]  $\mu_3 (dm, b, c) =  \mu_2 \big(  \mu_2 (m,b), c \big) - \mu_2 \big(  m, \mu_2 (b,c) \big) $,
        \item[(f)] $\mu_3  \big( \mu_2 (a,b), c, e \big) - \mu_3 \big(  a, \mu_2 (b,c), e \big) + \mu_3 \big(   a, b, \mu_2 (c,e) \big) 
         = \mu_2 \big(   \mu_3 (a,b,c), e \big) + \mu_2 \big(  a, \mu_3 (b,c,e) \big).$ 
    \end{itemize}
\end{defn}

A $2$-term $A_\infty$-algebra as above may be denoted by $(A_1 \xrightarrow{d} A_0, \mu_2, \mu_3)$. 
%When $A_1 = 0$, one simply get an associative algebra structure on $A_0$ with the multiplication given by $\mu_2 : A_0 \otimes A_0 \rightarrow A_0.$
%A $2$-term $A_\infty$-algebra $(A_1 \xrightarrow{d} A_0, \mu_2, \mu_3)$ is said to be skeletal if the differential $d = 0$. Skeletal algebras are related to Hochschild $3$-cocycles of associative algebras. There is a one-to-one correspondence between skeletal algebras and triples $(A, M, \theta),$ where $A$ is an associative algebra, $M$ is an $A$-bimodule and $\theta$ is a Hochschild $3$-cocycle of $A$ with coefficients in $M$ \cite{das}. More precisely, let $( A_1 \xrightarrow{0} A_0, \mu_2, \mu_3)$ be a skeletal algebra. Then $(A_0, \mu_2)$ is an associative algebra, $A_1$ is an $A_0$-bimodule by $l (a, m) = \mu_2 (a, m)$ and $r (m, a) = \mu_2 (m, a)$; the map $\mu_3 : A_0 \otimes A_0 \otimes A_0 \rightarrow A_1$ defines a $3$-cocycle of $A_0$ with coefficients in $A_1$.

\begin{defn}
Let $(A_1 \xrightarrow{d} A_0, \mu_2, \mu_3)$ and $(A_1' \xrightarrow{d'} A_0', \mu_2', \mu_3')$ be $2$-term $A_\infty$-algebras. A morphism between them consists of
 a chain map $f : A \rightarrow A'$ (which consists of linear maps $f_0 : A_0 \rightarrow A_0'$ and $f_1 : A_1 \rightarrow A_1'$ with $f_0 \circ d = d' \circ f_1$)
and a bilinear map $f_2 : A_0 \otimes A_0 \rightarrow A_1'$
such that for any $a, b, c \in A_0$ and $m \in A_1$, the following conditions hold
\begin{itemize}
\item[(a)] $d' f_2 (a,b) = f_0 (\mu_2 (a,b)) - \mu_2' (f_0 (a), f_0 (b)),$
\item[(b)] $f_2 (a, dm) = f_1 (\mu_2 (a,m)) - \mu_2' (f_0 (a), f_1(m)),$
\item[(c)] $f_2 (dm, a) = f_1 (\mu_2 (m, a)) - \mu_2' (f_1 (m), f_0 (a))$,
\item[(d)] $f_2 (\mu_2 (a,b), c) - f_2 ( a, \mu_2 (b, c)) - \mu_2' (f_2 (a,b), f_0 (c)) + \mu_2' ( f_0 (a), f_2 (b,c)) \\
= f_1 (\mu_3 (a,b, c)) - \mu_3' (f_0 (a), f_0 (b), f_0 (c)).$
\end{itemize}
\end{defn}

We denote the category of $2$-term $A_\infty$-algebras and morphisms between them by $2\mathrm{\bf A}_\infty$.

\begin{defn}\label{ass-der-inf-def}
Let $(A_1 \xrightarrow{d} A_0, \mu_2, \mu_3)$ be a $2$-term $A_\infty$-algebra. A homotopy derivation of degree $0$ on it consists of a chain map $\theta : A \rightarrow A$ (which consists of linear maps $\theta_i : A_i \rightarrow A_i$, for $i = 0 ,1$ satisfying $\theta_0 \circ d = d \circ \theta_1$) and $\theta_2 : A_0 \otimes A_0 \rightarrow A_1$ satisfying the followings: for any $a, b, c \in A_0$ and $m \in A_1$,
\begin{itemize}
\item[(a)] $d ( \theta_2 (a, b)) = \mu_2 (\theta_0 a, b) + \mu_2 (a, \theta_0 b) - \theta_0 (\mu_2 (a, b))$,
\item[(b)] $\theta_2 (a, dm) = \mu_2 (\theta_0 a, m) + \mu_2 (a, \theta_1 m)  - \theta_1 (\mu_2 (a, m)),$
\item[(c)] $\theta_2 (dm, a) = \mu_2 ( \theta_1 m, a) + \mu_2 (m, \theta_0 a) - \theta_1 (\mu_2 (m, a)),$
\item[(d)] $\theta_1 ( \mu_3 (a, b, c)) = \theta_2 (a, \mu_2 (b, c)) - \theta_2 (\mu_2 (a, b), c) + \mu_2 (a, \theta_2 (b, c)) - \mu_2 (\theta_2 (a, b), c) \\+ \mu_3 (\theta_0 a, b, c) + \mu_3 (a, \theta_0 b, c) + \mu_3 (a, b, \theta_0 c).$
\end{itemize}
\end{defn}

We call a $2$-term $A_\infty$-algebra with a homotopy derivation a $2$-term $\mathrm{AssDer}_\infty$-pair. We denote such a pair by $(A_1 \xrightarrow{d} A_0, \mu_2, \mu_3, \theta_0, \theta_1, \theta_2).$ An $\mathrm{AssDer}_\infty$-pair is said to be skeletal if the underlying $2$-term $A_\infty$-algebra is skeletal, i.e. $d = 0$.

\begin{prop}\label{skeletal-chr}
There is a one-to-one correspondence between the set of all skeletal $\mathrm{AssDer}_\infty$-pairs and the set of all triples $(( A, \phi_A), (M, \phi_M), ( \theta, \psi))$, where $( A, \phi_A)$ is an AssDer pair, $(M, \phi_M)$ is a representation and $( \theta, \psi) \in C^3_{\mathrm{AssDer}} (A, M)$ is a $3$-cocycle in the cohomology of the AssDer pair with coefficients in $(M, \phi_M)$.
\end{prop}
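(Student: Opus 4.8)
The plan is to unravel the definition of a skeletal $\mathrm{AssDer}_\infty$-pair $(A_1 \xrightarrow{0} A_0, \mu_2, \mu_3, \theta_0, \theta_1, \theta_2)$ term by term, matching each structural axiom with the data of an AssDer pair, a representation, and a $3$-cocycle. Since $d = 0$, the chain conditions degenerate: $\theta_0 \circ d = d \circ \theta_1$ is vacuous, and the underlying $2$-term $A_\infty$-algebra is already known (by the result of \cite{das} quoted just before the statement) to correspond to a triple $(A_0, A_1, \mu_3)$ where $A_0$ is an associative algebra under $\mu_2$, $A_1$ is an $A_0$-bimodule via $l(a,m) = \mu_2(a,m)$ and $r(m,a) = \mu_2(m,a)$, and $\mu_3 \in C^3(A_0, A_1)$ is a Hochschild $3$-cocycle. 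So the correspondence on the ``underlying'' level is already in hand; what remains is to account for the extra data $\theta_0, \theta_1, \theta_2$ and the extra axioms (a)--(d) of Definition \ref{ass-der-inf-def} with $d=0$.

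First I would read off that axiom (a), which with $d=0$ becomes $\mu_2(\theta_0 a, b) + \mu_2(a, \theta_0 b) - \theta_0(\mu_2(a,b)) = 0$, says precisely that $\phi_A := \theta_0$ is a derivation of the associative algebra $(A_0, \mu_2)$; hence $(A := A_0, \phi_A := \theta_0)$ is an AssDer pair. Next, axioms (b) and (c), again with $d=0$, become $\mu_2(\theta_0 a, m) + \mu_2(a, \theta_1 m) - \theta_1(\mu_2(a,m)) = 0$ and $\mu_2(\theta_1 m, a) + \mu_2(m, \theta_0 a) - \theta_1(\mu_2(m,a)) = 0$; rewriting these in terms of the bimodule actions, they say $\theta_1(am) = \phi_A(a) m + a \theta_1(m)$ and $\theta_1(ma) = \theta_1(m) a + m \phi_A(a)$. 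These are exactly equations (\ref{rep-1-eqn}) and (\ref{rep-2-eqn}) for the pair $(M := A_1, \phi_M := \theta_1)$, so $(M, \phi_M)$ is a representation of $(A, \phi_A)$. Setting $\theta := \mu_3 \in C^3(A,M)$ and $\overline{\theta} := \theta_2 \in C^2(A,M)$, I would then check that axiom (d) is precisely the cocycle condition $\delta_{\mathrm{Hoch}} \overline{\theta} + (-1)^3 \delta \theta = 0$ from the second line of (\ref{ass-der-diff}), after observing that the Hochschild-cocycle condition $\delta_{\mathrm{Hoch}} \theta = 0$ is already guaranteed by the underlying $A_\infty$-structure (axiom (f) via \cite{das}). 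Together $\delta_{\mathrm{Hoch}}\theta = 0$ and $\delta_{\mathrm{Hoch}}\overline{\theta} - \delta\theta = 0$ give $\partial(\theta, \overline{\theta}) = 0$, i.e. $(\theta, \overline{\theta})$ is a $3$-cocycle in $C^3_{\mathrm{AssDer}}(A,M)$.

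For the converse I would run the same dictionary backwards: given $((A, \phi_A), (M, \phi_M), (\theta, \overline{\theta}))$ with $(\theta, \overline{\theta})$ a $3$-cocycle, set $A_0 = A$, $A_1 = M$, $d = 0$, let $\mu_2$ restrict to the product on $A_0$ and to the bimodule actions on the mixed terms (and be zero on $A_1 \otimes A_1$), set $\mu_3 = \theta$, $\theta_0 = \phi_A$, $\theta_1 = \phi_M$, $\theta_2 = \overline{\theta}$, and verify that axioms (a)--(f) of the $2$-term $A_\infty$-definition together with (a)--(d) of the homotopy derivation definition hold — each following from associativity of $\mu$, the bimodule axioms, the derivation/representation compatibilities, and the splitting $\partial(\theta,\overline{\theta}) = (\delta_{\mathrm{Hoch}}\theta, \delta_{\mathrm{Hoch}}\overline{\theta} - \delta\theta) = 0$ into its two components. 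Finally I would note the two assignments are mutually inverse, which is immediate since each piece of data is carried across by an identity map.

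The main obstacle is bookkeeping rather than conceptual: one must be careful with the sign $(-1)^3 = -1$ in front of $\delta$ in the degree-$3$ coboundary formula (\ref{ass-der-diff}) and make sure the expansion of $\delta \theta = \sum_i \theta \circ (\mathrm{id}^{\otimes(i-1)} \otimes \phi_A \otimes \mathrm{id}^{\otimes(3-i)}) - \phi_M \circ \theta$ matches the six $\mu_3(\theta_0 -,-,-)$-type terms and the $\theta_1 \circ \mu_3$ term in axiom (d) with the correct signs — the axiom (d) as displayed is written with all-plus signs on the $\mu_3(\theta_0 a, b, c)$ terms and alternating signs on the $\theta_2$-terms, so I would have to confirm the conventions line up, possibly by expanding $\delta_{\mathrm{Hoch}}\overline\theta$ explicitly via (\ref{hoch-diff-for}) with the trivial-looking but genuine bimodule actions on $M$. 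Once the signs are reconciled the identification is term-by-term and automatic.
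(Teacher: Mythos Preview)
Your approach is exactly the paper's: unpack axioms (a)--(d) of Definition~\ref{ass-der-inf-def} with $d=0$, identify (a) with the derivation condition, (b)--(c) with the representation conditions, and (d) with the second component of the $3$-cocycle equation, the first component $\delta_{\mathrm{Hoch}}\mu_3=0$ coming from the underlying skeletal $A_\infty$-structure.

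The sign worry you flag at the end is real, and your tentative assignment $\overline{\theta}=\theta_2$ is off by exactly that sign. Expanding axiom (d) gives
\[
\theta_1\circ\mu_3 \;=\; \delta_{\mathrm{Hoch}}\theta_2 \;+\; \bigl(\mu_3(\theta_0\,\cdot,\cdot,\cdot)+\mu_3(\cdot,\theta_0\,\cdot,\cdot)+\mu_3(\cdot,\cdot,\theta_0\,\cdot)\bigr),
\]
and since $\delta\mu_3 = \mu_3(\theta_0\,\cdot,\cdot,\cdot)+\mu_3(\cdot,\theta_0\,\cdot,\cdot)+\mu_3(\cdot,\cdot,\theta_0\,\cdot) - \theta_1\circ\mu_3$, this rearranges to $\delta_{\mathrm{Hoch}}\theta_2 + \delta\mu_3 = 0$, not $\delta_{\mathrm{Hoch}}\theta_2 - \delta\mu_3 = 0$. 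With $(-1)^3=-1$ in (\ref{ass-der-diff}) the second cocycle component reads $\delta_{\mathrm{Hoch}}\overline{\theta}-\delta\theta=0$, so the correct dictionary is $(\theta,\overline{\theta})=(\mu_3,-\theta_2)$ and, in the converse direction, $\theta_2=-\overline{\theta}$. This is precisely what the paper writes; once you insert that minus sign your argument is complete and identical to theirs.
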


\begin{proof}
Let $( A_1 \xrightarrow{0} A_0, \mu_2, \mu_3 , \theta_0, \theta_1, \theta_2)$ be a skeletal $\mathrm{AssDer}_\infty$-pair. Then it follows from Definition \ref{ass-der-inf-def}(a) that $\theta_0$ is a derivation for the associative algebra $(A_0, \mu_2).$ Moreover, the conditions (b) and (c) say that $(A_1, \theta_1)$ is a representation of the AssDer pair $(A_0, \theta_0)$. Finally, the condition (f) of Definition \ref{defn-2-ass-term} implies that $\delta_\mathrm{Hoch} (\mu_3)=0$ and condition (d) of Definition \ref{ass-der-inf-def} implies that $\delta_{\mathrm{Hoch}} \theta_2 + \delta \mu_3  = 0$. Therefore, $(\mu_3, - \theta_2) \in C^3_{\mathrm{AssDer}} (A_0, A_1)$ is a $3$-cocycle in the cohomology of the AssDer pair with coefficients in $(A_1, \theta_1)$.

Conversely, let $((A, \phi_A), (M, \phi_M), (\theta, \psi))$ be such a triple. Then it can be easily verify that $(M \xrightarrow{0} A, \mu_2 = ( \mu_A, l, r), \theta, \phi_A, \phi_M, - \psi)$ is a skeletal $\mathrm{AssDer}_\infty$-pair. The above correspondences are inverses to each other.
\end{proof}

\begin{defn}
A $2$-term $\mathrm{AssDer}_\infty$-pair $(A_1 \xrightarrow{d} A_0, \mu_2, \mu_3, \theta_0, \theta_1, \theta_2)$ is called strict if $\mu_3 = 0$ and $\theta_2 = 0.$
\end{defn}

\begin{exam}\label{exm-cr-m}
Let $(A, \phi_A)$ be an AssDer pair. Take $A_0 = A_1 = A$, $d = \mathrm{id}$, $\mu_2 = \mu$ (the associative multiplication on $A$), $\theta_0 = \theta_1 = \phi_A$. Then $(A_1 \xrightarrow{d} A_0, \mu_2, \mu_3 = 0, \theta_0, \theta_1, \theta_2 = 0)$ is strict $\mathrm{AssDer}_\infty$-pair.
\end{exam}

Next, we introduce crossed modules of AssDer pairs and show that strict $\mathrm{AssDer}_\infty$-pairs correspond to crossed modules of AssDer pairs.

\begin{defn}\label{crossed-mod}
A crossed module of AssDer pairs consist of a tuple $((A, \phi_A), (B, \phi_B), dt, \phi)$ in which $(A, \phi_A), (B, \phi_B)$ are both AssDer pairs, $dt : A \rightarrow B$ is a morphism of AssDer pairs and
\begin{align*}
\phi : B \otimes A \rightarrow A  \qquad \quad \phi : A \otimes B \rightarrow A
\end{align*}
defines an AssDer pair bimodule on $(A, \phi_A)$ satisfying the following conditions: for all $b \in B$ and $m, n \in A$,
\begin{itemize}
\item[(i)] $dt ( \phi (b, m)) = \mu_B ( b, dt(m))$,\\$dt ( \phi (m, b)) = \mu_B ( dt(m), b),$
\item[(ii)] $\phi ( dt(m), n ) = \mu_A (m, n),$\\$ \phi ( m, dt(n)) = \mu_A (m, n)$,
\item[(iii)] $\mu_A ( \phi (b, m), n) = \phi ( b, \mu_A (m, n))$,\\
$\mu_A (\phi (m, b), n ) = \mu_A( m , \phi (b, n)),$\\
$\phi ( \mu_A (m, n), b) = \mu_A ( m, \phi(n, b)),$
\item[(iv)] $\phi_A ( \phi (b,m)) = \phi ( \phi_B (b), m) + \phi (b, \phi_A (m)),$\\
$\phi_A ( \phi (m, b)) = \phi ( \phi_A (m), b) + \phi ( m, \phi_B (b)).$
\end{itemize}
\end{defn}

\begin{prop}\label{strict-chr}
There is a one-to-one correspondence between strict $AssDer_\infty$-pairs and crossed module of AssDer pairs.
\end{prop}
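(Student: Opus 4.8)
The plan is to make both constructions explicit and then check, under $\mu_3 = 0$ and $\theta_2 = 0$, that the defining identities of a $2$-term $\mathrm{AssDer}_\infty$-pair and those of a crossed module of AssDer pairs match bijectively.

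Given a strict $\mathrm{AssDer}_\infty$-pair $(A_1 \xrightarrow{d} A_0, \mu_2, \mu_3 = 0, \theta_0, \theta_1, \theta_2 = 0)$, I would put $B := A_0$ with product $\mu_B := \mu_2|_{A_0 \otimes A_0}$ and $\phi_B := \theta_0$: relation (f) of the $A_\infty$-algebra with $\mu_3 = 0$ gives associativity of $\mu_B$, and relation (a) of Definition~\ref{ass-der-inf-def} with $\theta_2 = 0$ gives that $\phi_B$ is a derivation, so $(B,\phi_B)$ is an AssDer pair. Next I would put $A := A_1$, $\phi_A := \theta_1$, with product $\mu_A(m,n) := \mu_2(dm,n)$; relation (c) of the $A_\infty$-algebra gives $\mu_2(dm,n) = \mu_2(m,dn)$ so that $\mu_A$ is well defined, associativity of $\mu_A$ follows from relation (e1) with $\mu_3 = 0$ after invoking the chain-map relations (a),(b), and $\phi_A$ is a derivation of $\mu_A$ by relation (b) of Definition~\ref{ass-der-inf-def} with $\theta_2 = 0$ together with $d\theta_1 = \theta_0 d$. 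Finally set $dt := d$ and let $\phi$ be the two mixed components of $\mu_2$, i.e. $\phi(b,m) := \mu_2(b,m)$ and $\phi(m,b) := \mu_2(m,b)$.

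Then I would read off the axioms of Definition~\ref{crossed-mod}: axiom (i) is the relations (a),(b) expressing that $d$ is a chain map for $\mu_2$; axiom (ii) is the definition of $\mu_A$; that $A$ is a $B$-bimodule and axiom (iii) are the mixed-degree associativities (e1)--(e3) with $\mu_3 = 0$; axiom (iv) is the relations (b),(c) of Definition~\ref{ass-der-inf-def} with $\theta_2 = 0$, which together say precisely that $(A,\phi_A)$ is an AssDer-pair bimodule over $(B,\phi_B)$; and $dt = d$ is a morphism of AssDer pairs because $d\theta_1 = \theta_0 d$ and $d\mu_A = \mu_B\circ(d\otimes d)$. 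Conversely, from a crossed module $((A,\phi_A),(B,\phi_B),dt,\phi)$ I would set $A_0 := B$, $A_1 := A$, $d := dt$, $\theta_0 := \phi_B$, $\theta_1 := \phi_A$, $\mu_3 := 0$, $\theta_2 := 0$, and assemble $\mu_2$ from $\mu_B$ on $A_0\otimes A_0$ and from $\phi$ on the two mixed components (no component on $A_1\otimes A_1$ is needed or produced); using axiom (ii) to identify $\mu_A$ with $\phi\circ(dt\otimes\mathrm{id})$, the same dictionary verifies relations (a)--(f) of the $A_\infty$-algebra and (a)--(d) of Definition~\ref{ass-der-inf-def}. The two assignments are visibly inverse to one another.

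The only point that is more than bookkeeping is the treatment of $\mu_A$: a strict $\mathrm{AssDer}_\infty$-pair carries no intrinsic product on $A_1$, so one must verify that $\mu_2(dm,n)$ is a well-defined associative multiplication for which $\theta_1$ is a derivation — exactly where the $A_\infty$-relation (c) and the mixed relations (e1)--(e3),(b) are used — while in the reverse direction crossed-module axiom (ii) is what makes those relations available. Everything else is a term-by-term comparison of the two lists of identities.
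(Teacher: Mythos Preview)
Your proposal is correct and follows essentially the same route as the paper: the paper invokes the known correspondence between strict $2$-term $A_\infty$-algebras and crossed modules of associative algebras (defining $\mu_{A_1}(m,n)=\mu_2(dm,n)=\mu_2(m,dn)$ and $\mu_{A_0}=\mu_2|_{A_0\otimes A_0}$), and then checks that conditions (a)--(c) of Definition~\ref{ass-der-inf-def} with $\theta_2=0$ give exactly that $\theta_0,\theta_1$ are derivations, $d$ is an AssDer morphism, and axiom (iv) of Definition~\ref{crossed-mod} holds --- precisely your dictionary, only spelled out in more detail on your side. One small slip: associativity of $\mu_B$ on $A_0$ comes from relation (d) with $\mu_3=0$, not relation (f), which becomes vacuous when $\mu_3=0$.
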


\begin{proof}
It is already known that strict $A_\infty$-algebras are in one-to-one correspondence with crossed modules of associative algebras \cite{das}. More precisely, $(A_1 \xrightarrow{d} A_0, \mu_2 , \mu_3 = 0)$ is a strict $A_\infty$-algebra if and only if $(A_1, A_0, d, \mu_2$) is a crossed module of associative algebras. Note that the associative products on $A_1$ and $A_0$ are respectively given by $\mu_{A_1} (m, n) := \mu_2 (dm, n) = \mu_2 (m, dn)$ and $\mu_{A_0} (a, b) = \mu_2 (a, b)$, for $m, n \in A_1$ and $a, b \in A_0$. It follows from (a) and (b) of Definition \ref{ass-der-inf-def}  that $\theta_1$ is a derivation on $A_1$ and $\theta_0$ is a derivation on $A_0$. Hence $(A_1, \theta_1)$ and $(A_0, \theta_0)$ are AssDer pairs. Since $\theta_0 \circ d = d \circ \theta_1$, we have $dt = d: A_1 \rightarrow A_0$ is a morphism of AssDer pairs. The conditions (i), (ii), (iii) of Definition \ref{crossed-mod} are also held.
Finally, the conditions (b) and (c) of Definition \ref{ass-der-inf-def}  are equivalent to the last condition of Definition \ref{crossed-mod}.
\end{proof}

The crossed module corresponding to the strict $\mathrm{AssDer}_\infty$-pair of Example \ref{exm-cr-m} is given by the tuple $((A, \phi_A), (A, \phi_A), \mathrm{id}, \mu_A)$.

\begin{defn}
Let $(A_1 \xrightarrow{d} A_0, \mu_2, \mu_3, \theta_0, \theta_1, \theta_2)$ and 
$(A'_1 \xrightarrow{d'} A'_0, \mu'_2, \mu'_3, \theta'_0, \theta'_1, \theta'_2)$ be two
$2$-term $\mathrm{AssDer}_\infty$-pair. A morphism between them consists of a morphism $(f_0, f_1, f_2)$ between the underlying $2$-term $A_\infty$-algebras together with a map $\mathcal{B} : A_0 \rightarrow A_1'$ such that the following conditions hold:
\begin{itemize}
\item[(i)] $\theta_0' ( f_0(a)) - f_0 ( \theta_0 (a)) = d' ( \mathcal{B} (a)),$
\item[(ii)] $\theta_1' ( f_1(m)) - f_1 ( \theta_1 (m)) =  \mathcal{B} (dm),$
\item[(iii)] $ f_1 ( \theta_2 (a, b)) + f_2 ( \theta_0 a, b) + f_2 ( a, \theta_0 b) - \theta_1' ( f_2 (a, b)) - \theta_2' ( f_0 (a), f_0 (b)) \\
= \mu_2' ( \mathcal{B}a, f_0 (b)) + \mu_2' ( f_0 (a), \mathcal{B} b) - \mathcal{B} ( l_2 (a, b)).$
\end{itemize}
\end{defn}

Let $A = ( A_1 \xrightarrow{d} A_0, \mu_2, \mu_3, \theta_0, \theta_1, \theta_2)$ and 
$A' = ( A_1' \xrightarrow{d'} A_0', \mu_2', \mu_3', \theta_0', \theta_1', \theta_2')$ be two $2$-term $\mathrm{AssDer}_\infty$-pairs and $ f = ( f_0, f_1, f_2, \mathcal{B})$ be a morphism between them. Let $A'' = ( A_1'' \xrightarrow{d''} A_0'', \mu_2'', \mu_3'', \theta_0'', \theta_1'', \theta_2'')$ be another $2$-term $\mathrm{AssDer}_\infty$-pair and $g = ( g_0, g_1, g_2, \mathcal{C})$ be a morphism from $A'$ to $A''$. Their composition is a morphism $ g \circ f : A \rightarrow A''$ of $\mathrm{AssDer}_\infty$-pairs whose components are given by $(g \circ f)_0 = g_0 \circ f_0,$ $(g \circ f)_1 = g_1 \circ f_1,$ and 
\begin{align*}
(g \circ f)_2 (a, b) = g_2 ( f_0 (a), f_0(b)) + g_1 ( f_2 (a, b)), ~~\quad \quad  \mathcal{D} = g_1 \circ \mathcal{B} + \mathcal{C} \circ f_0 : A_0 \rightarrow A_1''.
\end{align*}

For any $2$-term $\mathrm{AssDer}_\infty$-pair $A$, the identity morphism $\mathrm{id}_A$ is given by the identity chain map $A \rightarrow A$ together with $(\mathrm{id}_A)_2 = 0$ and $\mathcal{B} = 0$. The collection of $2$-term $\mathrm{AssDer}_\infty$-pairs and morphisms between them forms a category.
We denote this category by $2\mathrm{\bf AssDer}_\infty$.

\section{Categorification of AssDer pairs}\label{sec-cat}

%Categorification of Lie algebras was first studied by Baez and Crans \cite{baez}. The categorified Lie algebras are called Lie $2$-algebras and they are related to $2$-term strongly homotopy Lie algebras.

In this section, we study the categorification of AssDer pair, which we call AssDer $2$-pair. We show that the category of AssDer $2$-pairs and the category $2\mathrm{\bf AssDer}_\infty$ are equivalent.

A $2$-vector space $C$ is a category with vector space of objects $C_0$ and the vector space of arrows $C_1$ such that all structure maps in the category $C$ are linear. A morphism of $2$-vector spaces is a functor $F = (F_0, F_1)$ which is linear in the space of objects and arrows.  We denote the category of $2$-vector spaces by $2\mathrm{\bf Vect}$. Given a $2$-vector space $C = (C_1 \rightrightarrows C_0)$, we have a $2$-term complex $\mathrm{ker} (s) \xrightarrow{t} C_0$. A morphism between $2$-vector spaces induces a morphism between $2$-term complexes. Conversely, any $2$-term complex $A_1 \xrightarrow{d} A_0$ gives rise to a $2$-vector space $\mathbb{V} = ( A_0 \oplus A_1 \rightrightarrows A_0)$ in which  the set of objects is $A_0$ and the set of morphisms is $A_0 \oplus A_1$. The structure maps are given
by $s(a, m) = a,~ t(a, m) = a + dm$ and $i(a) = (a, 0)$. A morphism between $2$-term chain complexes
induces a morphism between corresponding $2$-vector spaces.
We denote the category of $2$-term complexes by $2\mathrm{\bf TermCom}$. There is an equivalence of categories
$2\mathrm{\bf TermCom} \simeq  2\mathrm{\bf Vect}.$

\begin{defn}
An associative $2$-algebra 
is a $2$-vector space $C$ equipped with
a bilinear functor $\mu : C \otimes C \rightarrow C$
and a trilinear natural isomorphism called the associator
    \begin{center}
    $\mathcal{A}_{\xi, \eta, \zeta} : \mu ( \mu(\xi, \eta), \zeta  ) \rightarrow   \mu   (   \xi, \mu (\eta, \zeta)) $
    \end{center}
satisfying the following identity represented by the pentagon
%\[
%\xymatrixrowsep{0.5in}
%\xymatrixcolsep{0.1in}
%\xymatrix{
%    &    \mu \big(~ \mu ( \mu(\xi , \eta) , \zeta ) ,~ \lambda \big) \ar[ld]_{\mathcal{A}_{\xi, \eta, \zeta}} \ar[rd]^{\mathcal{A}_{\mu(\xi, \eta), \zeta, \lambda}} &  \\
%\mu \big(~ \mu (\xi , \mu (\eta , \zeta)) ,~ \lambda \big) \ar[d]_{\mathcal{A}_{\xi, \mu (\eta, \zeta), \lambda}} & & \mu \big(~    \mu(\xi , \eta) , ~ \mu  (\zeta ,  \lambda ) ~\big) \ar[d]^{\text{id}} \\
%\mu \big(   \xi , ~\mu (  \mu (\eta , \zeta) , \lambda) ~\big) \ar[rd]_{\mathcal{A}_{\eta, \zeta, \lambda}} & & \mu \big(~ \mu (\xi , \eta) , ~  \mu(\zeta , \lambda)~ \big) \ar[ld]^{~~\mathcal{A}_{\xi, \eta, \mu(\zeta, \lambda)}} \\
% & \mu \big( \xi , ~ \mu ( \eta , \mu (\zeta , \lambda) ) ~\big). &  \\
%}
%\]

\[
\xymatrixrowsep{0.5in}
\xymatrixcolsep{-0.5in}
\xymatrix{
 & & \mu \big( \mu ( \mu(\xi , \eta) , \zeta ) , \lambda \big) \ar[lld]_{\mathcal{A}_{\xi, \eta, \zeta}} \ar[rrd]^{\mathcal{A}_{\mu(\xi, \eta), \zeta, \lambda}}  &&\\
\mu \big( \mu (\xi , \mu (\eta , \zeta)) , \lambda \big) \ar[rd]_{\mathcal{A}_{\xi, \mu (\eta, \zeta), \lambda}} &&&& \mu \big(  \mu(\xi , \eta) ,  \mu  (\zeta ,  \lambda ) \big) \ar[ld]^{~~\mathcal{A}_{\xi, \eta, \mu(\zeta, \lambda)}} \\
 & \mu \big(  \xi , \mu (  \mu (\eta , \zeta) , \lambda) \big) \ar[rr]_{\mathcal{A}_{\eta, \zeta, \lambda}} && \mu \big( \xi ,  \mu ( \eta , \mu (\zeta , \lambda) ) \big). & 
}
\]
\end{defn}

\begin{defn}
A morphism between associative $2$-algebras $(C, \mu, \mathcal{A})$ and $(C', \mu', \mathcal{A}')$ consists of a functor $F = (F_0, F_1)$ from the underlying $2$-vector space $C$ to $C'$ and a bilinear natural transformation 
\begin{align*}
F_2 (\xi, \eta) : \mu' ( F_0 (\xi), F_0 (\eta)) \rightarrow F_0 ( \mu (\xi, \eta))
\end{align*}
such that the following diagram commutes
\begin{align*}
\xymatrixrowsep{0.5in}
\xymatrixcolsep{-0.5in}
\xymatrix{
 &    \mu' \big( \mu' ( F_0 (\xi) , F_0 (\eta) ) , F_0 (\zeta) \big) \ar[rd]^{F_2 (\xi, \eta)} \ar[ld]_{\mathcal{A}'_{F_0 (\xi), F_0 (\eta), F_0 (\zeta)}} &  \\
\mu' \big(  F_0 (\xi) , \mu' (   F_0(\eta) , F_0 (\zeta))\big) \ar[d]_{F_2 (\eta, \zeta)} & &  \mu' \big(F_0 (\mu(\xi , \eta)) , F_0 (\zeta) \big)  \ar[d]^{F_2 \big(\mu(\xi, \eta), \Phi (\zeta) \big)} \\
\mu' \big( F_0 (\xi) , F_0 (\mu(\eta , \zeta)) \big) \ar[rd]_{F_2 \big( \xi, \mu (\eta, \zeta) \big)} & &  F_0 \big( \mu( \mu (\xi , \eta) , \zeta ) \big) \ar[ld]^{\mathcal{A}_{\xi, \eta, \zeta}} \\
  & F_0 \big( \mu ( \xi , \mu(\eta , \zeta) ) \big). & \\
}
\end{align*}
\end{defn}

The composition of two associative $2$-algebra morphisms is again an associative $2$-algebra morphism. More precisely, let $C, C'$ and $C''$ be three associative $2$-algebras and $F : C \rightarrow C'$, $G: C' \rightarrow C''$ be associative $2$-algebra morphisms. Then their composition $G \circ F : C \rightarrow C''$ is an associative $2$-algebra morphism given by
$ (G \circ F)_0 = G_0 \circ F_0 , ~ (G \circ F)_1 = G_1 \circ F_1$
and $(G \circ F)_2$ is given by the composition
\[
\xymatrixrowsep{3in}
\xymatrix{
\mu'' ( G_0 \circ F_0 (\xi), G_0 \circ F_0 (\eta))  \ar[r]^{G_2 (F_0 (\xi), F_0 (\eta) )} & G_0 ( ~ \mu'   ( F_0 (\xi) , F_0 (\eta) ) ~ )  \ar[r]^{G_0 (F_2 (\xi, \eta))}& (G_0 \circ F_0) (\mu (\xi, \eta)). \\
}
\]
Finally, for any associative $2$-algebra $C$, the identity morphism $\text{id}_C : C \rightarrow C$ is given by the identity functor as its linear functor and the identity natural transformation as $(\text{id}_C)_2$.
We denote the category of associative $2$-algebras and morphisms between them by $\mathrm{\bf Ass}2$.

\begin{defn}
Let $(C, \mu, \mathcal{A})$ be an associative $2$-algebra. A $2$-derivation on it consists of a linear functor $D: C \rightarrow C$ and a bilinear natural isomorphism, called the derivator
\begin{align*}
\mathcal{D}_{a, b} : D (\mu (a, b)) \rightsquigarrow \mu (Da, b) + \mu (a, Db)
\end{align*}
satisfying the following
\[
\xymatrixrowsep{0.5in}
\xymatrixcolsep{-.5in}
\xymatrix{
    &    D ( (a \cdot b) \cdot c) \ar[ld]_{\mathcal{D}_{a \cdot b, c}} \ar[rd]^{\mathcal{A}_{a,b,c}} &  \\
(a \cdot b) \cdot D(c) + D(a \cdot b) \cdot c \ar[d]_{\mathcal{D}_{a,b}} & &  D (a \cdot (b \cdot c)) \ar[d]^{\mathcal{D}_{a, b \cdot c}} \\
(a \cdot b) \cdot D(c) + (a \cdot D(b) + D(a) \cdot b) \cdot c  \ar[rd]_{\mathcal{A}_{a, b, Dc} + \mathcal{A}_{a, Db, c} + \mathcal{A}_{Da, b, c}} & & a \cdot D(b \cdot c) + D(a) \cdot (b \cdot c) \ar[ld]^{\mathcal{D}_{b,c}} \\
 &a \cdot (b \cdot D(c)) + a \cdot ( D(b) \cdot c) + D(a) \cdot (b \cdot c). &  \\
}
\]
\end{defn}

In the above diagram, we use the notation $\mu (a, b) = a \cdot b.$
We call an associative $2$-algebra together with a $2$-derivation by an $\mathrm{AssDer}2$-pair.

\begin{defn}
Let $(C, \mu, \mathcal{A}, D, \mathcal{D})$ and $(C', \mu', \mathcal{A}', D', \mathcal{D}')$ be two $\mathrm{AssDer}2$-pairs. A morphism between them consists of an associative $2$-algebra morphism $(F = (F_0, F_1), F_2)$ and a natural isomorphism
%\begin{align*}
$\Phi_a : D' \circ F_0 ( a) \rightarrow F_0 \circ D ( a)$
%\end{align*}
such that the following diagram commutes
\[
\xymatrixrowsep{0.5in}
\xymatrixcolsep{-0.1in}
\xymatrix{
    &    D' ( F_0 (a) \cdot' F_0 (b)) \ar[ld]_{\mathcal{D}'} \ar[rd]^{\mathcal{A}_{a,b,c}} &  \\
D' (F_0 (a)) \cdot' F_0 (b) + F_0 (a) \cdot' D' (F_0 (b)) \ar[d]_{\Phi} & &  D' (F_0 ( a \cdot b)) \ar[d]^{\Phi} \\
F_0 (D(a) \cdot' F_0 (b) + F_0 (a) \cdot' F_0 (D(a))  \ar[rd]_{F_2} & & F_0 (D( a \cdot b)) \ar[ld]^{\mathcal{D}} \\
 &F_0 ( D(a) \cdot b + a \cdot D(b)). &  \\
}
\]
\end{defn}

Here we use the bifunctors $\mu$ and $\mu'$ as $\cdot$ and $\cdot'$ respectively.
We denote the category of  $\mathrm{AssDer}2$-pairs together with morphisms between them by $\mathrm{\bf AssDer}2$.

It is known that the categories $2 \mathrm{\bf A}_\infty$ and $\mathrm{\bf Ass}2$ are equivalent. See for example \cite{das}. A functor $T : 2 \mathrm{\bf A}_\infty \rightarrow \mathrm{\bf Ass}2$ is given as follows.
Let $A = ( A_1 \xrightarrow{d} A_0, \mu_2, \mu_3)$ be a $2$-term $A_\infty$-algebra. The corresponding associative $2$-algebra is defined on the $2$-vector space $A_0 \oplus A_1 \rightrightarrows A_0$. The bifunctor $\mu$ and the associator $\mathcal{A}$ is given by
\begin{align*}
\mu ((a, m), (b,n)) =~& ( \mu_2 (a, b), \mu_2 (a, n) + \mu_2 (m, b) + \mu_2 (dm, n)),\\
\mathcal{A}_{a, b, c} =~~& ((ab)c, \mu_3 (a,b,c)).
\end{align*}
For any $A_\infty$-algebra morphism $(f_0, f_1, f_2)$ from $A$ to $A'$, the associative $2$-algebra morphism from $T(A)$ to $T(A')$ is given by
\begin{align*}
F_0 = f_0, \qquad F_1 = f_1 \quad \text{ and } \quad F_2 (a, b) = ( \mu' ( f_0 (a), f_0 (b)), f_2 (a, b)).
\end{align*}
On the other hand, a functor $S : \mathrm{\bf Ass}2 \rightarrow 2 \mathrm{\bf A}_\infty$
is given as follows. Given an associative $2$-algebra $C = ( C_1 \rightrightarrows C_0, \mu, \mathcal{A}),$ the corresponding $2$-term $A_\infty$-algebra is defined on the complex $A_1 = \mathrm{ker} s \xrightarrow{ d = t|_{\mathrm{ker} s}} C_0 = A_0$. Define $\mu_2 : A_i \otimes A_j \rightarrow A_{i+j}$ and $\mu_3 : A_0 \otimes A_0 \otimes A_0 \rightarrow A_1$ by
\begin{align*}
\mu_2(a, b) = \mu(a, b), ~ \mu_2 (a, m) =& \mu (i(a), m), ~ \mu_2 (m, a) = \mu_2 (m, i(a)),~  \mu_2 (m, n) = 0, \\
 \text{ and }~\mu_3 (a, b, c) =~& \mathcal{A}_{a, b, c} - i ( s(\mathcal{A}_{a, b, c})). 
\end{align*} 
For any associative $2$-algebra morphism $(F_0, F_1, F_2) : C \rightarrow C'$, the corresponding $A_\infty$-algebra morphism from $S(C)$ to $S (C')$ is given by
\begin{align*}
f_0  = F_0, \qquad f_1 = F_1|_{\mathrm{ker } s}, \qquad f_2 (a, b) = F_2 (a, b) - i (s F_2 (a, b)).
\end{align*}
It is well-known that the above two functors provide the equivalence between $2 \mathrm{\bf A}_\infty$ and $\mathrm{\bf Ass}2$ (see for instance, \cite{das}). This equivalence can be extended to respective categories equipped with derivations. More precisely, we have the following.

\begin{thm}\label{final-thrm}
The categories $2\mathrm{\bf AssDer}_\infty$ and $\mathrm{\bf AssDer}2$ are equivalent.
\end{thm}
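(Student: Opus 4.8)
\textbf{Proof proposal for Theorem \ref{final-thrm}.}
The plan is to upgrade the equivalence $2\mathrm{\bf A}_\infty \simeq \mathrm{\bf Ass}2$, realized by the functors $T$ and $S$ recalled above, to the categories equipped with derivations. First I would extend $T$ to a functor $\widetilde{T} : 2\mathrm{\bf AssDer}_\infty \to \mathrm{\bf AssDer}2$. Given a $2$-term $\mathrm{AssDer}_\infty$-pair $(A_1 \xrightarrow{d} A_0, \mu_2, \mu_3, \theta_0, \theta_1, \theta_2)$, let $T$ produce the associative $2$-algebra on $A_0 \oplus A_1 \rightrightarrows A_0$ exactly as in the excerpt, and equip it with the $2$-derivation $(D, \mathcal{D})$ where $D$ is the linear functor with $D_0 = \theta_0$ on objects and $D_1(a,m) = (\theta_0 a, \theta_1 m)$ on arrows (this is a functor precisely because $\theta$ is a chain map, $\theta_0\circ d = d\circ\theta_1$), and where the derivator is the arrow
\[
\mathcal{D}_{a,b} = \big(\theta_0\mu_2(a,b),\, \theta_2(a,b)\big) : D(\mu(a,b)) \rightsquigarrow \mu(Da,b) + \mu(a,Db),
\]
which is well defined since Definition \ref{ass-der-inf-def}(a) is precisely the identity $\theta_0\mu_2(a,b) + d\theta_2(a,b) = \mu_2(\theta_0 a, b) + \mu_2(a, \theta_0 b)$. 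Naturality of $\mathcal{D}$ in each argument unwinds to Definition \ref{ass-der-inf-def}(b) and (c), and the hexagonal coherence axiom for $(D,\mathcal{D})$ unwinds — after inserting $\mathcal{A}_{a,b,c} = ((ab)c,\mu_3(a,b,c))$, applying the bifunctor $\mu$ to arrows, and adding $2$-morphisms in $A_0\oplus A_1 \rightrightarrows A_0$ — to Definition \ref{ass-der-inf-def}(d). On morphisms, a morphism $(f_0,f_1,f_2,\mathcal{B})$ of $2$-term $\mathrm{AssDer}_\infty$-pairs is sent to the morphism of $\mathrm{AssDer}2$-pairs whose underlying associative $2$-algebra morphism is $T$ applied to $(f_0,f_1,f_2)$ and whose $\Phi$-component is the arrow $\Phi_a = (D'F_0(a), -\mathcal{B}(a))$; conditions (i)--(iii) in the definition of a morphism of $2$-term $\mathrm{AssDer}_\infty$-pairs then become, respectively, well-definedness of $\Phi$, the "$\mathcal{B}$ on boundaries" equation, and the coherence square for $\Phi$. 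Functoriality of $\widetilde{T}$ follows from that of $T$ together with the composition law for the $\mathcal{B}$-components recalled in Section \ref{sec-hd}.

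Next I would extend $S$ to a functor $\widetilde{S} : \mathrm{\bf AssDer}2 \to 2\mathrm{\bf AssDer}_\infty$. Given an $\mathrm{AssDer}2$-pair $(C,\mu,\mathcal{A},D,\mathcal{D})$ with underlying complex $A_1 = \ker s \xrightarrow{\,d = t|_{\ker s}\,} C_0 = A_0$, I would set $\theta_0 = D_0$, $\theta_1 = D_1|_{\ker s}$ (this lands in $\ker s$ because $D$ is a functor), and, in analogy with the formula for $\mu_3$,
\[
\theta_2(a,b) = \mathcal{D}_{a,b} - i\big(s(\mathcal{D}_{a,b})\big) \in \ker s .
\]
Axioms (a)--(d) of Definition \ref{ass-der-inf-def} then follow: (a) is the statement that $\mathcal{D}_{a,b}$ is an arrow together with $s(\mathcal{D}_{a,b}) = \theta_0\mu_2(a,b)$; (b) and (c) are naturality of $\mathcal{D}$; and (d) is extracted from the hexagon for $(D,\mathcal{D})$ after subtracting the $i\circ s$ corrections (exactly as $\mu_3$-coherence is extracted from the pentagon in \cite{das}). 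On morphisms, $(F,F_2,\Phi)$ is sent to $(f_0,f_1,f_2,\mathcal{B})$ with $f_0,f_1,f_2$ as in $S$ and $\mathcal{B}(a) = -\big(\Phi_a - i(s\Phi_a)\big)$.

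Finally, to conclude that $\widetilde{T}$ and $\widetilde{S}$ form an equivalence, I would exhibit natural isomorphisms $\widetilde{S}\circ\widetilde{T} \Rightarrow \mathrm{Id}$ and $\widetilde{T}\circ\widetilde{S} \Rightarrow \mathrm{Id}$. Since $S$ and $T$ already give an equivalence $2\mathrm{\bf A}_\infty \simeq \mathrm{\bf Ass}2$ (\cite{das}), the underlying $2$-term $A_\infty$ (resp.\ associative $2$-algebra) components of these natural isomorphisms are already at hand; what remains is to check that those same components are compatible with the derivation data, i.e.\ that the induced $\Phi$- and $\mathcal{B}$-coherence conditions hold. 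Because $\widetilde{S}$ and $\widetilde{T}$ were arranged so that they act as $\theta_0$ on objects and differ from $\theta_2$ (resp.\ $\mathcal{D}$) only by the $i\circ s$-correction, the natural transformations of the underlying equivalence automatically intertwine the derivators, so this last check is a routine diagram chase.

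The step I expect to be the main obstacle is the bookkeeping in the hexagonal coherence axiom for the derivator: to match it with Definition \ref{ass-der-inf-def}(d) one must expand $\mathcal{A}$ (through $\mu_3$), $\mathcal{D}$ (through $\theta_2$), the bifunctor $\mu$ on arrows, and the addition of $2$-morphisms in $A_0 \oplus A_1 \rightrightarrows A_0$, and then separate the $A_0$- and $A_1$-components — the $A_0$-component being automatic while the $A_1$-component is precisely (d). The accompanying $i\circ s$-corrections needed to make $\theta_1$ and $\theta_2$ land in $\ker s$ add a further layer of care, but they are handled exactly as in the passage from $\mathcal{A}$ to $\mu_3$ in \cite{das} and present no genuine difficulty.
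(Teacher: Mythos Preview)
Your proposal is correct and follows essentially the same route as the paper: extend the functors $T$ and $S$ from the known equivalence $2\mathrm{\bf A}_\infty \simeq \mathrm{\bf Ass}2$ by transporting the derivation data via $D(a,m)=(\theta_0 a,\theta_1 m)$, $\mathcal{D}_{a,b}=(\theta_0\mu_2(a,b),\theta_2(a,b))$ in one direction and $\theta_0=D_0$, $\theta_1=D_1|_{\ker s}$, $\theta_2(a,b)=\mathcal{D}_{a,b}-i(s\mathcal{D}_{a,b})$ in the other, then check that the natural isomorphisms of the underlying equivalence already respect the derivators. Your write-up is in fact more careful than the paper's about the source component of arrows and about the sign in the $\Phi\leftrightarrow\mathcal{B}$ correspondence (your $\Phi_a=(D'F_0(a),-\mathcal{B}(a))$ is what makes condition (i) match the target identity $t\Phi_a=F_0D(a)$; the paper simply writes $\Phi(a)=\mathcal{B}(a)$).
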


\begin{proof}
Given a $2$-term $\mathrm{AssDer}_\infty$-pair $(A_1 \xrightarrow{d} A_0, \mu_2, \mu_3, \theta_0, \theta_1, \theta_2)$, consider the corresponding associative $2$-algebra $T(A)$. A functor $D: T(A) \rightarrow T(A)$ and the derivator $\mathcal{D}$ is given by
\begin{align*}
D ((a, m)) = ( \theta_0 (a), \theta_1 (m)), \qquad \mathcal{D}_{a, b} = ( ab, \theta_2 (a, b)). 
\end{align*}
It can be checked that if $(f_0,f_1, f_2, \mathcal{B})$ is a morphism of $\mathrm{AssDer}_\infty$-pairs, then $(F_0, F_1, F_2, \Phi)$ is a morphism of corresponding $\mathrm{AssDer}2$-pairs, where $\Phi(a) = \mathcal{B} (a).$

Conversely, for any $\mathrm{AssDer}2$-pair $(C, \mu, \mathcal{A}, D, \mathcal{D})$, consider the $2$-term $A_\infty$-algebra $S(C)$. A homotopy derivation on $S(C)$ is given by $\theta_0 = D (i(a)),~ \theta_1 (m) = D|_{\mathrm{ker} s} (m)$ and $\theta_2 (a, b) = \mathcal{D}_{a, b} - i( s \mu (a,b)).$ If $(F_0, F_1, F_2, \Phi)$ is a morphism of $\mathrm{AssDer}2$-pairs, then $(f_0, f_1, f_2, \mathcal{B})$ is a morphism between corresponding $2$-term $\mathrm{AssDer}_\infty$-pairs, where $\mathcal{B} (a) = \Phi(a).$

Thus, it remains to prove that the composition $T \circ S$ is naturally isomorphic to the identity functor $1_{\mathrm{ AssDer}2}$ and $S \circ T$ is naturally isomorphic to $1_{ 2 \mathrm{\bf AssDer}_\infty}$. For any $\mathrm{AssDer}2$-pair $(C, \mu, \mathcal{A}, D, \mathcal{D})$, the AssDer$2$-pair structure on $(T \circ S) (C)$ is defined on the $2$-vector space $A_0 \oplus A_1 \rightrightarrows A_0$, where $A_0 = C_0$ and $A_1 = \mathrm{ker }s.$ Define $\theta : T \circ S \rightarrow 1_{\mathrm{\bf AssDer}2}$ by $\theta_C : (T \circ S) (C) \rightarrow 1_{\mathrm{\bf AssDer}2} (C)$ with $(\theta_C)_0 (a) =a$, ~$(\theta_C)_1 (a, m) = i(a) + m$. Then $\theta_C$ is an isomorphism of AssDer$2$-pairs. It is also a natural isomorphism.

For any $2$-term $\mathrm{ AssDer}_\infty$-pair $A = (A_1 \xrightarrow{d} A_0, \mu_2, \mu_3, \theta_0, \theta_1, \theta_2)$, the
$\mathrm{AssDer}_\infty$-pair structure on $(S \circ  T) (A)$ is defined on the same complex $A_1 \xrightarrow{d} A_0$. In fact, we get back the same $2$-term $\mathrm{ AssDer}_\infty$-pair. Therefore, the natural isomorphism $\vartheta: S \circ T \rightarrow 1_{2 \mathrm{\bf AssDer}_\infty}$ is given by the identity. 
\end{proof}

\begin{exam}
    Let $(A, \phi_A)$ be an AssDer pair. Then $(A \xrightarrow{\mathrm{id}} A, \mu_2 = \mu, \mu_3 = 0, \theta_0 = \theta_1 = \phi_A, \theta_2 = 0)$ is a strict $\mathrm{AssDer}_\infty$-pair. The corresponding $\mathrm{AssDer2}$-pair is also strict in the sense that the associator and the derivator are both trivial.
\end{exam}
%\medskip

\section* {Conclusions.} In this paper, we mainly concentrate on a pair of associative algebra and a derivation on it. We call such a pair an AssDer pair. Among other things, we study central extensions and deformations of an AssDer pair by extending the classical extensions and deformations of associative algebras. For this, we define a cohomology theory for AssDer pairs generalizing the cohomology of LieDer pairs introduced in \cite{tang-f-sheng}.

In \cite{das2} the author studies deformations of multiplications in a nonsymmetric operad which generalizes the deformation of associative algebras. As applications, the author formulates deformation of various Loday-type (e.g. dendriform, tridendriform, dialgebra, quadri) algebras. See \cite{das2} for explicit cohomology of Loday-type algebras. Given a nonsymmetric operad $\mathcal{O}$ with a multiplication $m \in \mathcal{O}(2)$ (i.e. $m$ satisfies $m \circ_1 m = m \circ_2 m$), an element $\phi \in \mathcal{O}(1)$ is called a derivation for $m$ if $\phi$ satisfies
%\begin{align*}
$$\phi \circ m = m \circ_1 \phi + m \circ_2 \phi.$$
%\end{align*}
By the method of the present paper, one may study deformations of a pair $(m , \phi)$, where $m$ is a multiplication on $\mathcal{O}$ and $\phi$ is a derivation for $m$. Therefore, one may deduce deformations of Loday-type algebras equipped with derivations. On the other hand, Balavoine \cite{bala} studied deformations of algebras over quadratic operads. Derivations on an algebra over an operad are studied in \cite{loday, doubek-lada}. In a subsequent paper, we aim to construct an explicit cohomology and deformation theory for $\mathcal{P}$-algebras equipped with derivations, where $\mathcal{P}$ is a quadratic operad. The results of the present paper can be dualized to study deformations of coalgebras with coderivations. Since an $A_\infty$-algebra can be described by a square-zero coderivation on the tensor coalgebra of a graded vector space, one can explore formal deformations of $A_\infty$-algebras and compare with the results of \cite{fia-pen}.

%\medskip

In \cite{cra-moer} the authors considered deformations of a Lie algebroid $A$. Such deformations are governed by a (shifted) graded Lie algebra on the space of multiderivations on $A$. A $1$-cocycle of the corresponding complex is given by a Lie algebroid derivation on $A$. A derivation of the underlying vector bundle $A$ is a Lie algebroid derivation if it is also a derivation for the Lie bracket on $\Gamma A$. 
%For any $X \in \Gamma A$, the map $\Phi_X : \Gamma A \rightarrow \Gamma A$ is a Lie algebroid derivation. 
Lie algebroid derivations are worth interesting as their flows give rise to Lie algebroid automorphisms. It would be interesting to study deformations of Lie algebroids equipped with Lie algebroid derivation.

\medskip

\noindent {\bf Acknowledgements.} The authors thank the esteemed referee for his/her comments on the earlier version of the manuscript. The research of Apurba Das was supported by the postdoctoral fellowship of Indian Institute of Technology Kanpur. Ashis Mandal acknowledges the support from MATRICS-Research grant  MTR/2018/000510 by SERB, DST, Government of India.

%\medskip

%\noindent {\bf Data Availability Statement.} NA.

%\mbox{ }\\
%
%\providecommand{\bysame}{\leavevmode\hbox to3em{\hrulefill}\thinspace}
%\providecommand{\MR}{\relax\ifhmode\unskip\space\fi MR }
% \MRhref is called by the amsart/book/proc definition of \MR.
%\providecommand{\MRhref}[2]{%
 % \href{http://www.ams.org/mathscinet-getitem?mr=#1}{#2}
%}
%\providecommand{\href}[2]{#2}

\end{document}